\definecolor{lightgray}{rgb}{0.85,0.85,0.85}
\newcommand{\sC}			{{\mathscr{C}}}
\newcommand{\sU}			{{\mathscr{U}}}
\newcommand{\sD}            {{\mathscr{D}}}
\newcommand{\sW}            {{\mathscr{W}}}
\newcommand{\cH}              {{\mathcal{H}}}
\newcommand{\cR}            {{\mathcal{R}}}
\newcommand{\cA}            {{\mathcal{A}}}
\newcommand{\cP}            {{\mathcal{P}}}
\newcommand{\cX}         {{\mathcal{X}}}
\newcommand{\cY}            {{\mathcal{Y}}}
\newcommand{\cK}         {{\mathcal{K}}}
\newcommand{\cZ}         {{\mathcal{Z}}}
\newcommand{\nn}			{{\nonumber}}
\newcommand{\R}			{{\mathbb{R}}}
\newcommand{\A}         {{\mathbb{A}}}
\newcommand{\U}         {{\mathbb{U}}}
\newcommand{\D}         {{\mathbb{D}}}
\newcommand{\B}         {{\mathbb{B}}}
\newcommand{\N}         {{\mathbb{N}}}
\newcommand{\I}         {{\mathbb{I}}}
\newcommand{\W}         {{\mathbb{W}}}
\newcommand{\norm}[1]{\left\lVert#1\right\rVert}
\DeclareMathOperator*{\argmin}{arg\,min}
\DeclareMathOperator*{\argmax}{arg\,max}
\newcommand{\iprod}[2]{\left\langle#1, #2\right\rangle}
\newcommand{\half}[0]{\frac{1}{2}}
\newcommand{\integral}[4]{\int_{#1}^{#2}{#3}\,d#4}
\newtheorem{definition}{Definition}
\newtheorem{theorem}{Theorem}
\newtheorem{corollary}{Corollary}
\newtheorem{remark}{Remark}
\newtheorem{assumption}{Assumption}
\newtheorem{lemma}{Lemma}
\title{\textbf{Upper and Lower Bounds for a Class of Constrained Linear Time-Varying Games}\thanks{This research was supported by the Australian Commonwealth Government and the Faculty of Engineering and Information Technology (University of Melbourne) through the Ingenium Scholarship and the Air Force Office of Scientific Research (Grant number: FA2386‐22-1-4074).}}
\author{Vincent Liu$^\dag$ \and Chris Manzie$^\dag$ \and Peter M. Dower\thanks{The authors are with the Department of Electrical and Electronic Engineering, University of Melbourne, VIC, 3010 Australia (email: liuv2@student.unimelb.edu.au; manziec@unimelb.edu.au; pdower@unimelb.edu.au).}}
\date{}
\begin{document}
\maketitle
\begin{abstract}
This paper develops an algorithm for upper- and lower-bounding the value function for a class of linear time-varying games subject to convex control sets. In particular, a two-player zero-sum differential game is considered where the respective players aim to minimise and maximise a convex terminal state cost. A collection of solutions of a single-player dynamical system subject to a trimmed control set is used to characterise a viscosity supersolution of a Hamilton-Jacobi (HJ) equation, which in turn yields an upper bound for the value function. Analogously, a collection of hyperplanes is used to characterise a viscosity subsolution of the HJ equation, which yields a lower bound. The computational complexity and memory requirement of the proposed algorithm scales with the number of solutions and hyperplanes that characterise the bounds, which is not explicitly tied to the number of system states. Thus, the algorithm is tractable for systems of moderately high dimension whilst preserving rigorous guarantees for optimal control and differential game applications.

\vspace{1em}
\noindent\textbf{Keywords:} Hamilton-Jacobi equations, viscosity solutions, differential games, linear time-varying systems, reachability analysis
\end{abstract}

\section{Introduction}
\label{sec: introduction}
The concept of a value function arises in various control and estimation problems including the synthesis of optimal feedback control laws and filters \cite{kunisch04, fleming2000max}, the modelling of differential games \cite{ES:84}, the design of optimal trajectories \cite{skoraczynski2023incentivizing}, and the characterisation of reachable sets \cite{chen2018hamilton}. Though versatile in its applications, with the exception of specific problems such as linear quadratic regulators, easy-to-compute characterisations of the value function are not readily available. Under mild assumptions, the value function admits an equivalent characterisation as the solution of a Hamilton-Jacobi (HJ) equation and conventional grid-based approaches such as level set methods \cite{mitchell2005time}, Markov chain approximations \cite{kushner1990numerical}, and semi-Lagrangian schemes \cite{falcone2006numerical} provide a general purpose means of numerically solving these HJ equations. However, these approaches suffer from the curse-of-dimensionality, which typically limits their use to system with no more than four states \cite{bokanowski2022optimistic}.

Modern literature on numerical methods for solving HJ equations largely focus on mitigating this well-known curse-of-dimensionality. In the special case where the Hamiltonian is state-independent, a generalised Hopf formula may be used to characterise solutions of HJ equations via a finite-dimensional nonlinear program \cite{chow2018algorithm}, which yields a highly parallelisable means of computing the value function. Otherwise, for systems where the control input is of lower dimension than the state, optimistic planning algorithms \cite{bokanowski2022optimistic} and tree-structure approaches \cite{alla2019efficient} can yield efficient approximations of the value function as discretisation is performed in the control space as opposed to the state space. In a more general setting, parametrisation of the value function via a finite collection of basis functions, such as in max plus methods \cite{akian2008max}, neural networks \cite{DLM20}, and polynomial approximations \cite{kalise2018polynomial}, may also yield computationally tractable approximation schemes for systems of moderate dimension. 

Provable properties for approximations of the value function are often desirable for a range of applications, yet such properties can be difficult to obtain for the aforementioned numerical schemes. For instance, in reachability analysis, upper and lower bounds for the value function may be used to produce inner- and outer-approximations of reachable sets \cite{liu2024hamilton}. To this end, classic comparison results for viscosity solutions of HJ equations (see, e.g., \cite[Thm 5.2.7, p.109]{CS:04}) can facilitate the construction of verifiable bounds for the value function. In the case of time-invariant polynomial systems, polynomial supersolutions and subsolution of HJ equations can be generated via a sum-of-squares program, which yields upper and lower bounds for the value function \cite{xue2019inner, jones2019}. Similar ideas are utilised in \cite{sinyakov2015method} wherein super- and subsolutions are characterised for bilinear systems in which the state matrix lies within a matrix ellipsoid.

In this paper, an algorithm is developed for upper- and lower-bounding the value function for a class of linear time-varying (LTV) games subject to convex control sets. In particular, a two-player zero-sum differential game is considered where the respective players aim to minimise and maximise a terminal state cost, which is assumed to be convex. The proposed upper and lower bounds are, respectively, a viscosity supersolution and subsolution of an HJ equation associated with the differential game. The notion of viscosity solution is explicitly utilised in this work to construct bounds that are not continuously differentiable. The proposed upper bound in this work is a generalisation of \cite[Section III.b]{liu2024efficient} to a differential game setting. Compared to \cite{liu2024efficient}, one additional assumption is imposed, which roughly requires that the minimising player has greater control authority than the maximising player. 

The proposed upper bound is characterised by a collection of solutions of an auxiliary single-player system whose controls belong to a `trimmed' control set. If these solutions are optimal with respect to minimising the terminal state cost, the upper bound coincides with the value function along these solutions, yielding a tight upper bound. When required, the values of the upper bound can be accessed by solving a quadratic program. The proposed lower bound is characterised by the maximum of a collection of hyperplanes, which can be made to coincide with the value function along the same set of solutions that characterise the upper bound. Thus, the upper and lower bounds provide an interval in which the value function must lie and this interval collapses along a characteristic set of solutions. Both the computational complexity and memory requirement of the proposed bounds are tied to the sparsity of the collection of solutions and hyperplanes, which is independent of the number of system states. Note that hyperplanes have been previously used to construct outer-approximations of reachable sets for LTV games in \cite{hwang2005polytopic}, although a characterisation as a lower bound for the value function is not provided. This work extends on \cite{hwang2005polytopic} by generalising the control sets, characterising the maximum of the hyperplanes as a viscosity subsolution, and demonstrating tightness of the lower bound. In \cite{akian2025stochastic}, a collection of hyperplanes have also been used to lower bound the value function, albeit for discrete-time nonlinear systems in a single-player setting. 

The remainder of this paper is organised as follows. Section \ref{sec: preliminaries} presents the problem setting and reviews the necessary preliminaries for the two-player differential game considered in this work. Section \ref{sec: main results} demonstrates the upper and lower bound guarantees of the proposed approximating functions. The application of these results to reachability analysis is discussed in Section \ref{sec: reachability}. Section \ref{sec: algorithm} then describes an algorithmic procedure for computing the proposed upper and lower bounds. Finally, an illustrative example is presented in Section \ref{sec: numerical example}.

\section*{Notation}

 Let $\N\doteq \{1,2,\ldots\}, \R$, and $\R_{\geq 0}$ denote the set of natural, real, and non-negative real numbers, respectively.  Let $\langle \cdot, \cdot\rangle$ denote the Euclidean inner product with $\norm{\cdot}\doteq \iprod{\cdot}{\cdot}^\half$ denoting the corresponding norm. The distance $d(\cdot;\,\Omega):\R^n\rightarrow\R$ to a given set $\Omega \subset \R^n$ is defined by $d(x;\,\Omega) \doteq \inf_{\xi\in\Omega}\norm{x - \xi}$ for all $x\in\R^n$. The space of continuous functions from $\Omega$ to $\cY$ is denoted by $\sC(\Omega;\,\cY)$. The pre-image of a set $\cY_0\subseteq \cY$ under $g:\Omega \rightarrow \cY$ is denoted by $g^{-1}\left(\cY_0 \right) \doteq \{ \omega \in \Omega\, |\, g(\omega) \in \cY_0 \}$. An open ball of radius $r > 0$ centred at $x_0 \in \R^n$ is denoted by $\B_r(x_0)\doteq \{x\in\R^n\,|\,\norm{x - x_0} < r\}$. The convex hull of a set of $N$ points $\{x_i\}^{N}_{i=1}$ is denoted by $\text{conv}(\{x_i\}^{N}_{i=1}) \doteq \big\{\sum^{N}_{i=1}\lambda_ix_i\,|\,\sum^{N}_{i=1}\lambda_i = 1,\, \{\lambda_i\}^{N}_{i=1}\subset[0,1] \big\}$. The complement of a set $\cY\subseteq \R^n$ is denoted by $\cY^c\doteq \{y\in \R^n\,|\,y\notin\cY\}$. The Minkowski sum and difference of the sets $\cY\subset \R^n$ and $\cZ\subset \R^n$ are denoted by $\cY\oplus\cZ\doteq \{y + z\in\R^n\,|\,y \in \cY, z\in \cZ\}$ and $\cY \ominus \cZ \doteq \{c\in\R^n\,|\,\{c\}\oplus \cZ \subseteq \cY\}$, respectively. The image of a set $\cY \subset \R^m$ under a linear map $ y\mapsto Py$ with $P\in \R^{n\times m}$ is denoted by $P\cY \doteq \{Py \in \R^n\,|\,y\in\cY\}$. The transpose of a matrix $P$ is denoted by $P'$ and the $n$-by-$n$ identity matrix is denoted by $\I_n$.

\section{Preliminaries} 
\label{sec: preliminaries}
Fix $T \geq t \geq 0 $, $x\in\R^n$, and consider a continuous-time LTV system with trajectories satisfying the finite-dimensional Cauchy problem
\begin{equation}
    \begin{split}
    \dot{\xi}(s) &= A(s)\xi(s) + B(s)u(s) + E(s)d(s), \quad \text{a.e. } s \in (t,T), \\
    \xi(t) &= x,
    \end{split}
    \label{eq: prelim: linear system}
\end{equation}
in which $\xi(s)\in\mathbb{R}^n$ is the state, $u(s)\in\U\subset\R^{m}$ is the input for player I, and $d(s)\in\D\subset\R^{\ell}$ is the input for player II, all at time $s$ with $A\in\sC([0,T];\,\R^{n\times n})$, $B\in\sC([0,T];\,\R^{n\times m})$, and $E\in\sC([0,T];\,\R^{n\times \ell})$. The control inputs for player I and II are selected such that $u\in\sU[t,T]$ and $d\in\sD[t,T]$ with
\begin{equation}
    \begin{split}
        \sU[t,T] &\doteq \left\{u:[t,T]\rightarrow \U\;| \;u \text{ measurable} \right\}, \\
        \sD[t,T] &\doteq \left\{d:[t,T]\rightarrow \D\;| \;d \text{ measurable} \right\},
    \end{split}
    \label{eq: prelim: control sets for player I and II}
\end{equation}
and both $\U$ and $\D$ are compact, convex, and non-empty. For any fixed controls $u\in\sU[t,T]$ and $d\in\sD[t,T]$, \eqref{eq: prelim: linear system} admits a unique and continuous solution, which will be denoted by $\xi^{u,d}_{t,x}(s)$ at time $s\in[t,T]$. 

To introduce the differential game setting used in this work, definitions for the strategies that each player may employ are recalled in the following from \cite[Def 1.1, p.433]{bardi1997optimal} and \cite{ES:84}.
\begin{definition} A strategy for player I (resp. player II) is a map $\alpha:\sD[t,T]\rightarrow\sU[t,T]$ (resp. $\delta:\sU[t,T]\rightarrow\sD[t,T]$). The strategy is non-anticipative if for any $s\in[t,T]$ and $d_1, d_2 \in \sD[t,T]$ (resp. $u_1, u_2\in\sU[t,T]$)
\begin{equation}
    \begin{split}&d_1(\tau) = d_2(\tau) \text{ a.e. } \tau \in [t, s] \implies \alpha[d_1](\tau) = \alpha[d_2](\tau) \text{ a.e. } \tau \in [t, s] \\
         &u_1(\tau) = u_2(\tau) \text{ a.e. } \tau \in [t, s] \implies \delta[u_1](\tau) = \delta[u_2](\tau) \text{ a.e. } \tau \in [t, s]. \quad \text{(resp.)}
    \end{split}
\end{equation}
\label{def: prelim: non-anticipative strategy}
\end{definition}

In this work, player I is assumed to have no access to information about player II's actions, thus, they employ a strategy that is independent of player II's control. Conversely, player II is assumed to have access to player I's actions up until the current time. As such, player II may utilise a non-anticipative strategy (see Definition \ref{def: prelim: non-anticipative strategy}). These assumptions are summarised in the following. 
\begin{assumption} 
\begin{enumerate}[(i)]
    \item Player I employs a strategy $\alpha:\sD[t,T]\rightarrow\sU[t,T]$ that is independent of player II's control, i.e., $\alpha[d] = u\in\sU[t,T]$ for all $d\in\sD[t,T]$.
    \item Player II employs a non-anticipative strategy and the set of all non-anticipative strategies for player II is denoted by $\Delta[t,T]$. 
\end{enumerate}
    \label{assumption: prelim: strategies}
\end{assumption}

Attach to \eqref{eq: prelim: linear system} the value function $v: [0,T]\times \mathbb{R}^n\rightarrow \R$ defined by
\begin{equation}
    v(t,x) \doteq \sup_{\delta \in \Delta[t,T]}\inf_{u\in\sU[t,T]}g\left(\xi^{u,\delta[u]}_{t,x}(T)\right),
    \label{eq: prelim: value function}
\end{equation}
in which $g\in\sC(\R^n;\,\R)$ is a Mayer cost (see \cite[Chap 7.1, p.186]{CS:04}) for the zero-sum differential game between player I and II who wish to minimise and maximise $g$, respectively, at the terminal time $T$. 

\begin{remark}
The value in \eqref{eq: prelim: value function} is also referred to as the upper value of the differential game due to the information advantage given to player II. In the problem setting of this work, Isaacs' condition is satisfied, which implies that if player I were given the advantage instead, i.e., the role of each player is reversed in Assumption \ref{assumption: prelim: strategies}, the value of the game would not change (see \cite[Cor 2.2, p.443]{bardi1997optimal}). Thus, the choice of favouring player II is arbitrary and the results of this work still hold when favouring player I.
\label{remark: prelim: equivalence of lower and upper values}
\end{remark}

\subsection{Viscosity Solutions}
\label{sec: visc solutions}
The notion of viscosity solution is needed to construct upper and lower bounds for $v$ that are not necessarily continuously differentiable. Accordingly, the relevant background for viscosity solutions is reviewed and adapted from \cite{bardi1997optimal} and \cite{crandall1983viscosity}. Consider the HJ equation given by
\begin{equation}
    -v_t(t,x) + H(t,x,\nabla v(t,x)) = 0, \quad \forall (t,x) \in (0,T)\times\R^{n},
    \label{eq: prelim: HJI}
\end{equation}\normalsize
in which the Hamiltonian $H\in\sC([0,T]\times\R^ n\times \R^n;\,\R)$ in this work is given by
\begin{equation}
    H(t,x, p) \doteq \max_{u\in\U}\min_{d\in\D}\iprod{-p}{A(t)x+B(t)u + E(t)d}.
     \label{eq: prelim: hamiltonian}
\end{equation}

\begin{definition}
A pair $(q, p)\in\R\times \R^{n}$ is a subgradient (resp. supergradient) of $v\in\sC\left([0,T]\times\R^n;\,\mathbb{R}\right)$ at $(t_0, x_0)\in(0,T)\times\R^{n}$ if
\begin{align}
    \liminf_{(t,x)\rightarrow(t_0,x_0)}\frac{v(t,x)-v(t_0,x_0)-\iprod{(q, p)}{(t,x)-(t_0,x_0)}}{\norm{(t,x)-(t_0,x_0)}}  \geq 0,& \label{eq: prelim: subgradient def}\\
    \limsup_{(t,x)\rightarrow(t_0,x_0)}\frac{v(t,x)-v(t_0,x_0)-\iprod{(q,p)}{(t,x)-(t_0,x_0)}}{\norm{(t,x)-(t_0,x_0)}}\leq 0.& \quad (\text{resp.}) \label{eq: prelim: supergradient def}
\end{align}
The set of all $(q,p)\in\R\times \R^{n}$ satisfying \eqref{eq: prelim: subgradient def} (resp. \eqref{eq: prelim: supergradient def}) is called the subdifferential (resp. superdifferential) of $v$ at $(t_0, x_0)$, which is denoted by $D^-v(t_0, x_0)$ (resp. $D^+v(t_0,x_0)$).
\label{def: prelim: reg subdifferentials and superdifferentials}
\end{definition}

Viscosity solutions of \eqref{eq: prelim: HJI} may be defined via these subdifferentials and superdifferentials (see \cite[Def 1.1, p.25]{bardi1997optimal} and \cite[Lem 1.7, p.29]{bardi1997optimal}).

\begin{definition}
A function $v \in \sC\left([0,T]\times\R^{n};\,\R\right)$ is a continuous viscosity supersolution (resp. subsolution) of \eqref{eq: prelim: HJI} if for all $(t_0,x_0)\in(0,T)\times\R^{n}$ and for all $(q,p)\in D^-v(t_0, x_0)$ (resp. $D^+v(t_0,x_0)$),
\begin{equation}
    -q + H(t_0,x_0, p) \geq 0 \quad (\text{resp. } \leq 0).
    \label{eq: prelim: visc supersolution and subsolution condition}
\end{equation}
Additionally, $v$ is a continuous viscosity solution of \eqref{eq: prelim: HJI} if it is both a viscosity supersolution and subsolution of \eqref{eq: prelim: HJI}.
\label{def: prelim: visc solution}
\end{definition}  

The value $v$ in \eqref{eq: prelim: value function} admits an equivalent characterisation as the unique viscosity solution of \eqref{eq: prelim: HJI} subject to an appropriate terminal condition, which follows from \cite[Thm 4.1, Cor 4.2]{ES:84}. Note that the boundedness and Lipschitz properties of $g$ in (2.2) of \cite{ES:84} may be relaxed if these same properties are not required of $v$ \cite[Thm 3.2]{ES:84}. 
\begin{theorem} Consider the terminal value problem
\begin{equation}
\begin{split}
    -v_t(t,x) + H(t, x, \nabla v(t,x)) = 0, \quad &\forall (t,x)\in (0,T)\times\mathbb{R}^n, \\
    v(T,x) = g(x), \quad &\forall x\in\mathbb{R}^n,
    \end{split}
    \label{eq: prelim: HJI with terminal condition}
\end{equation}
in which $g\in\sC\left(\R^n;\,\R\right)$ and the Hamiltonian $H$ is given by \eqref{eq: prelim: hamiltonian}. Then, \eqref{eq: prelim: HJI with terminal condition} admits the unique viscosity solution $v\in\sC([0,T] \times\R^n;\,\R)$ given by \eqref{eq: prelim: value function}.
\label{theorem: prelim: value function and HJI equation}
\end{theorem}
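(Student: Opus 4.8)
The plan is to show that the terminal-value problem \eqref{eq: prelim: HJI with terminal condition}, with $H$ as in \eqref{eq: prelim: hamiltonian}, is an instance of the constrained differential games treated in \cite{ES:84}, so that existence of a continuous viscosity solution, its uniqueness, and its representation as the upper value \eqref{eq: prelim: value function} all follow from \cite[Thm 4.1, Cor 4.2]{ES:84}; the task is thus a verification of hypotheses, together with the two relaxations noted after the statement.

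First I would record the structural properties of the data. As $A,B,E$ are continuous on the compact interval $[0,T]$, there is $M\geq 0$ with $\norm{A(s)},\norm{B(s)},\norm{E(s)}\leq M$ for every $s\in[0,T]$, and as $\U,\D$ are compact there is $R_0\geq 0$ bounding every admissible control value. Hence $f(s,\xi,u,d)\doteq A(s)\xi+B(s)u+E(s)d$ is continuous in $(s,\xi)$, globally Lipschitz in $\xi$ uniformly in $(s,u,d)$, and of at most affine growth, $\norm{f(s,\xi,u,d)}\leq M\norm{\xi}+MR_0$. The Cauchy problem \eqref{eq: prelim: linear system} is well posed for every $u\in\sU[t,T]$ and $d\in\sD[t,T]$ (as already noted), and Gronwall's inequality gives $\norm{\xi^{u,d}_{t,x}(s)}\leq e^{M(s-t)}\big(\norm{x}+R_0(T-t)\big)$ for all $s\in[t,T]$; thus the reachable terminal states from $(t,x)$ form a bounded set, so by continuity of $g$ the sup-inf in \eqref{eq: prelim: value function} is finite and $v$ is well defined, and the continuous dependence of $\xi^{u,d}_{t,x}(\cdot)$ on $(t,x)$ will furnish continuity of $v$. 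Moreover, the bracket in \eqref{eq: prelim: hamiltonian} has no $u$--$d$ coupling, so $H(t,x,p)=\iprod{-p}{A(t)x}+\max_{u\in\U}\iprod{-p}{B(t)u}+\min_{d\in\D}\iprod{-p}{E(t)d}$ and Isaacs' condition $\max_{u}\min_{d}=\min_{d}\max_{u}$ holds trivially; consequently \eqref{eq: prelim: hamiltonian} is the (single) Isaacs Hamiltonian attached to \eqref{eq: prelim: value function} (cf. Remark \ref{remark: prelim: equivalence of lower and upper values}).

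With these facts in hand the existence half is standard: the dynamic programming principle for the upper value --- that $v(t,x)=\sup_{\delta\in\Delta[t,T]}\inf_{u\in\sU[t,T]}v\big(r,\xi^{u,\delta[u]}_{t,x}(r)\big)$ for all $t\leq r\leq T$ --- combined with a standard test-function argument shows $v$ to be both a viscosity supersolution and a viscosity subsolution of \eqref{eq: prelim: HJI} in the sense of Definition \ref{def: prelim: visc solution}, while $v(T,\cdot)=g$ by construction; this is \cite[Thm 4.1]{ES:84}. Uniqueness within $\sC([0,T]\times\R^n;\R)$ is the comparison principle for \eqref{eq: prelim: HJI with terminal condition}, proved by doubling of variables, i.e. \cite[Cor 4.2]{ES:84}. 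The hypotheses of \cite{ES:84} are not met verbatim on two counts: $f$ is unbounded in $\xi$ and $g$ is merely continuous, neither bounded nor Lipschitz. The first is harmless --- $f$ remains globally Lipschitz in $\xi$ and the Gronwall bound confines trajectories to a compact set on $[t,T]$, which is all that the representation and comparison arguments exploit --- while the second is exactly what \cite[Thm 3.2]{ES:84} permits, at the cost of not concluding that $v$ is bounded or Lipschitz; this cost is acceptable since Theorem \ref{theorem: prelim: value function and HJI equation} asserts only $v\in\sC([0,T]\times\R^n;\R)$.

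I expect the only genuine obstacle to be the bookkeeping forced by these relaxations: confirming that the doubling-of-variables comparison argument still closes on the unbounded domain $\R^n$ with merely continuous data, i.e. that the penalisation can be chosen to dominate $v$ --- whose growth is governed by that of $g$ over the exponentially inflated reachable sets above --- and, separately, checking that the explicit time-dependence of $f$, with coefficients only continuous in $s$, is absorbed by the time-dependent form of the theory (equivalently, by adjoining the clock $s$ to the state). Everything else is a routine matching of \eqref{eq: prelim: linear system}--\eqref{eq: prelim: value function} against the framework of \cite{ES:84}.
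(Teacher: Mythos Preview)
Your proposal is correct and matches the paper's approach exactly: the paper does not supply an independent proof but simply invokes \cite[Thm 4.1, Cor 4.2]{ES:84} for existence and uniqueness, together with \cite[Thm 3.2]{ES:84} to justify dropping the boundedness and Lipschitz hypotheses on $g$ at the cost of those same properties for $v$. Your write-up is a more detailed hypothesis check of precisely this citation, including the Isaacs condition (cf.\ Remark \ref{remark: prelim: equivalence of lower and upper values}) and the two relaxations the paper flags; nothing further is needed.
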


The linchpin in the construction of the desired upper and lower bounds of $v$ in this work, is a classic comparison result for solutions of \eqref{eq: prelim: HJI}, which follows from \cite[Ex 3.6, p.182]{bardi1997optimal} and \cite[Rem 3.10, pp.154-155]{bardi1997optimal}.

\begin{theorem} Let $\Bar{v},\underline{v} \in \sC([0,T]\times\R^{n};\,\R)$ be, respectively, a viscosity supersolution and subsolution of \eqref{eq: prelim: HJI} with the Hamiltonian $H$ given by \eqref{eq: prelim: hamiltonian}. If $\Bar{v}(T,x) \geq \underline{v}(T,x)$ for all $x\in \R^n$, then, $\Bar{v}(t,x) \geq \underline{v}(t,x)$ for all $(t,x)\in [0,T]\times\R^n$.
\label{theorem: prelim: comparison result}
\end{theorem}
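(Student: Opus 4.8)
The plan is to argue by contradiction via the doubling-of-variables technique, and the first task is to record the structural features of $H$ that place \eqref{eq: prelim: HJI} in the scope of the cited comparison theory. Since $\iprod{-p}{A(t)x}$ does not depend on $(u,d)$, the Hamiltonian splits as $H(t,x,p) = -\iprod{p}{A(t)x} + G(t,p)$ with $G(t,p)\doteq\max_{u\in\U}\min_{d\in\D}\iprod{-p}{B(t)u+E(t)d}$. Writing $C_A\doteq\max_{s\in[0,T]}\norm{A(s)}$ and $C_G\doteq\max_{s\in[0,T]}\norm{B(s)}\max_{u\in\U}\norm{u} + \max_{s\in[0,T]}\norm{E(s)}\max_{d\in\D}\norm{d}$ — finite, since $\U,\D$ are compact and $A,B,E$ are continuous on $[0,T]$ — one checks directly that for all $s\in[0,T]$ and $x,\tilde x,p,q\in\R^n$,
\begin{equation*}
 |H(s,x,p) - H(s,\tilde x,p)| \leq C_A\norm{p}\,\norm{x-\tilde x}, \qquad |H(s,x,p) - H(s,x,q)| \leq (C_A\norm{x} + C_G)\,\norm{p-q}.
\end{equation*}
Together with the joint continuity of $H$, these are precisely the hypotheses under which the machinery of \cite[Ex 3.6, p.182; Rem 3.10, pp.154--155]{bardi1997optimal} operates; the min-max form of $H$ plays no further role. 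It is then convenient to reverse time, setting $\tau\doteq T-t$, $\underline w(\tau,x)\doteq\underline v(T-\tau,x)$, $\bar w(\tau,x)\doteq\bar v(T-\tau,x)$, so that $\underline w$ and $\bar w$ become a continuous viscosity subsolution and supersolution of $w_\tau(\tau,x) + \widetilde H(\tau,x,\grad w(\tau,x)) = 0$ on $(0,T)\times\R^n$, where $\widetilde H(\tau,x,p)\doteq H(T-\tau,x,p)$ obeys the same two estimates; the hypothesis becomes $\underline w(0,\cdot)\leq\bar w(0,\cdot)$ and the claim is equivalent to $\underline w\leq\bar w$ on $[0,T]\times\R^n$.

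Next I would fix an arbitrary $(\tau_0,x_0)\in(0,T)\times\R^n$ (the cases $\tau_0 = 0$ and $\tau_0 = T$ follow from the hypothesis and from continuity, respectively) and suppose, for contradiction, that $\underline w(\tau_0,x_0) - \bar w(\tau_0,x_0) > 0$. Replacing $\underline w$ by $\underline w^\sigma\doteq\underline w - \sigma(1+\tau)$ for small $\sigma>0$ makes it a strict subsolution — it satisfies $w_\tau + \widetilde H\leq-\sigma$ in the viscosity sense — while preserving $\underline w^\sigma(0,\cdot)\leq\bar w(0,\cdot)$ and the strict inequality at $(\tau_0,x_0)$. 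The key point is that \eqref{eq: prelim: HJI} has finite speed of propagation: because the $p$-modulus of $\widetilde H$ grows only linearly in $\norm{x}$, the value at $(\tau_0,x_0)$ is influenced only by a bounded backward cone of dependence $\mathcal{C}\doteq\{(\tau,x): 0\leq\tau\leq\tau_0,\ \norm{x-x_0}\leq R(\tau)\}$, where the continuous profile $R(\cdot)$ is obtained by solving a linear differential inequality linking $\dot R$ to $C_A(\norm{x_0}+R(\tau))+C_G$, with $R(\tau_0)$ small and $R(0)=R<\infty$. On the compact set $\mathcal{C}$ I would then study, for $\varepsilon,\eta>0$, the doubled functional
\begin{equation*}
 \Psi_{\varepsilon,\eta}(\tau,x,y) \doteq \underline w^\sigma(\tau,x) - \bar w(\tau,y) - \frac{\norm{x-y}^2}{2\varepsilon} - \eta\big(\beta(\tau,x) + \beta(\tau,y)\big),
\end{equation*}
where $\beta$ is a smooth barrier diverging on the lateral boundary $\{\norm{x-x_0}=R(\tau)\}$, so $\Psi_{\varepsilon,\eta}$ attains a maximum at some interior $(\hat\tau,\hat x,\hat y)$. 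The usual estimates give $\norm{\hat x-\hat y}\to0$ and $\norm{\hat x-\hat y}^2/\varepsilon\to0$ as $\varepsilon\to0$, the maximizers stay in the fixed compact set $\mathcal{C}$, and — because $\underline w^\sigma(\tau_0,x_0)>\bar w(\tau_0,x_0)$ while $\underline w^\sigma(0,\cdot)\leq\bar w(0,\cdot)$ — the time component $\hat\tau$ is bounded away from $0$ for $\varepsilon,\eta$ small, hence $\hat\tau\in(0,\tau_0]\subset(0,T)$, where both viscosity inequalities are available.

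At $(\hat\tau,\hat x,\hat y)$, the standard argument (doubling the time variable too, or invoking the theorem on sums) produces test gradients $p_\varepsilon\doteq(\hat x-\hat y)/\varepsilon + \eta\,\grad_x\beta(\hat\tau,\hat x)$ in the $x$-slot of $D^+\underline w^\sigma(\hat\tau,\hat x)$ and $q_\varepsilon\doteq(\hat x-\hat y)/\varepsilon - \eta\,\grad_x\beta(\hat\tau,\hat y)$ in the $y$-slot of $D^-\bar w(\hat\tau,\hat y)$, sharing a common time-slope. Subtracting the strict-subsolution inequality from the supersolution inequality and inserting the two Lipschitz estimates gives
\begin{equation*}
 \sigma \;\leq\; \widetilde H(\hat\tau,\hat y,q_\varepsilon) - \widetilde H(\hat\tau,\hat x,p_\varepsilon) \;\leq\; C_A\norm{q_\varepsilon}\,\norm{\hat x-\hat y} + \big(C_A\norm{\hat x} + C_G\big)\norm{p_\varepsilon - q_\varepsilon}.
\end{equation*}
Since $\norm{q_\varepsilon}\norm{\hat x-\hat y}\leq\norm{\hat x-\hat y}^2/\varepsilon + \eta\norm{\grad_x\beta}\norm{\hat x-\hat y}\to0$ as $\varepsilon\to0$, since $\norm{p_\varepsilon-q_\varepsilon}=O(\eta)$, and since $\norm{\hat x}$ is bounded uniformly in $\varepsilon$ and $\eta$ (as $\hat x\in\mathcal{C}$), letting $\varepsilon\to0$ and then $\eta\to0$ forces $\sigma\leq0$, a contradiction. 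Running this for every $(\tau_0,x_0)$ and undoing the time reversal yields $\bar v(t,x)\geq\underline v(t,x)$ on $[0,T]\times\R^n$.

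I expect the main obstacle to be the localization just described: the theorem assumes no growth bound on $\bar v$ or $\underline v$ (the excerpt explicitly relaxes boundedness of $g$), so a naive doubling on all of $\R^n$ fails — a penalty such as $\eta(1+\norm{x}^2)^{1/2}$ need not confine the maximizers, and $\sup(\underline w-\bar w)$ need not even be finite a priori. Overcoming this requires genuinely exploiting the finite-speed-of-propagation structure of \eqref{eq: prelim: HJI} — constructing the cone $\mathcal{C}$ and the lateral barrier $\beta$, and verifying that $\beta$'s gradient contributes only $O(\eta)$ so that it is absorbed — which is exactly the technical content abstracted in \cite[Rem 3.10, pp.154--155]{bardi1997optimal}. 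An alternative route, avoiding this machinery, is to invoke the sub- and super-optimality principles: any continuous subsolution with terminal data $\leq g\doteq\underline v(T,\cdot)$ is dominated by the game value \eqref{eq: prelim: value function} with cost $g$, and any continuous supersolution with terminal data $\geq g$ dominates it, so chaining these through Theorem \ref{theorem: prelim: value function and HJI equation} gives $\bar v\geq\underline v$; but establishing those principles for the differential game is itself of comparable difficulty.
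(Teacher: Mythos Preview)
The paper does not supply its own proof of this theorem; it simply records the statement and attributes it to \cite[Ex~3.6, p.182; Rem~3.10, pp.154--155]{bardi1997optimal}. Your proposal is a correct and detailed unpacking of exactly that cited argument: you verify the two structural estimates on $H$ (Lipschitz in $x$ with modulus proportional to $\norm{p}$, Lipschitz in $p$ with modulus affine in $\norm{x}$) that place \eqref{eq: prelim: HJI} within the scope of those references, and then sketch the doubling-of-variables proof with finite-speed-of-propagation localization that those references carry out. So there is nothing to compare --- your approach \emph{is} the content the paper is invoking, written out rather than cited.
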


The following assumptions will be considered throughout this work with the relevant set of assumptions explicitly listed in each result. 
\begin{assumption} $g\in\sC(\R^n;\,\R)$ is convex and Lipschitz continuous with Lipschitz constant $L_g \geq 0$.
    \label{assumption: prelim: Mayer cost condition}
\end{assumption}
\vspace{-1.5em}
\begin{assumption} The set 
\begin{equation}
    \W(t) \doteq B(t)\U \ominus(- E(t)\D)
    \label{eq: prelim: trimmed input set def}
\end{equation}
is non-empty for all $t\in[0,T]$. 
    \label{assumption: prelim: trimmed input condition}
\end{assumption}
\begin{assumption} The set $\W(t)$ as defined in \eqref{eq: prelim: trimmed input set def} satisfies
\begin{equation}
    \W(t)\oplus \left(-E(t)\D\right) = B(t)\U
    \label{eq: prelim: trimmed input set condition}
\end{equation}
for all $t\in[0,T]$. 
    \label{assumption: prelim: recoverability of player 1 set}
\end{assumption}

\begin{remark}
    In general, $\W(t)\oplus\left(-E(t)\D\right)\subseteq B(t)\U$, which follows by definition of the Minkowski difference. However, both Assumptions \ref{assumption: prelim: trimmed input condition} and \ref{assumption: prelim: recoverability of player 1 set} hold in the case where $B(t)\U$ and $-E(t)\D$ are zonotopes with the latter set being \emph{aligned} with the former for all $t\in[0,T]$ (see \cite[Def 1, Prop 3]{yang:21}). In particular, suppose 
    \begin{equation*}
    B(t)\U = \Big\{c_1(t) + \sum^{n_p}_{i=1}\theta_i b_i(t)\,\big |\,\theta_i\in[-1,1]\text{ for all } i\in\{1,\ldots,n_p\} \Big\},
    \end{equation*}
    in which $c_1(t)\in\R^n$ and $b_i(t)\in\R^n$ for all $i\in\{1,\ldots,n_p\}$. Then, $-E(t)\D$ is aligned with $B(t)\U$ if
    \begin{equation*}
            -E(t)\D  = \Big\{c_2(t) + \sum^{n_p}_{i=1}\theta_i \kappa_i(t)b_i(t)\,\big|\,\theta_i\in[-1,1]\text{ for all } i\in\{1,\ldots,n_p\} \Big\},
    \end{equation*}
    in which $c_2(t)\in\R^n$ and $\kappa_i(t)\in[0,1]$ for all $i\in\{1,\ldots,n_p\}$. 
    \label{remark: prelim: trimmed input set conditions}
\end{remark}
\section{Main Results}
\label{sec: main results}
 
In this section, semi-explicit characterisations for upper and lower bounds for the value function $v$ in \eqref{eq: prelim: value function} are constructed. In particular, a candidate upper bound $\Bar{v}\in\sC([0,T]\times\R^n;\,\R)$ is proposed with a terminal condition $\Bar{v}(T,x) \doteq  \Bar{g}(x) $ that upper bounds the cost $g$. The upper bound guarantees are then obtained by demonstrating that $\Bar{v}$ is a viscosity supersolution of \eqref{eq: prelim: HJI}, which follows by Theorem \ref{theorem: prelim: comparison result} and noting that the value function $v$ is a viscosity subsolution of \eqref{eq: prelim: HJI}. A lower bound $\underline{v}\in\sC([0,T]\times\R^n;\,\R)$ is constructed in an analogous manner by demonstrating that $\underline{v}$ is a viscosity subsolution of \eqref{eq: prelim: HJI} with a terminal condition $\underline{v}(T,x) \doteq \underline{g}(x) $ that lower bounds $g$.

From the perspective of (the minimising) player I, an upper bound for $v$ yields a conservative estimate for the minimum value of $g$ that can be attained at time $T$ despite any strategy $\delta\in\Delta[t,T]$ that player II may employ. Analogously for (the maximising) player II, a lower bound for $v$ yields a conservative estimate for the maximum value of $g$ that can be attained despite the presence of an adversarial input from player I. 

\subsection{Upper-bounding the Value Function}
\label{sec: upper-bounding the value function}

To produce an upper bound for the Mayer cost $g$, consider a collection of $N\in\N$ levels $\{\gamma_k\}^N_{k=1}\subseteq g\left(\R^n\right)$, which implies that the $\gamma_k$-level set of $g$, i.e., $g^{-1}(\{\gamma_k\})$, is non-empty. For each level set, attach a set of $n_k\in\N$ (potentially non-unique) points $\{\Bar{x}_{i,k}\}^{n_k}_{i=1}\subseteq g^{-1}\left(\{\gamma_k\}\right)$ for all $i\in\{1,\ldots,n_k\}$. Consider the set
\begin{equation}
    \Bar{\Omega}_k \doteq \text{conv}\left(\{\Bar{x}_{i,k}\}^{n_k}_{i=1}\right).
    \label{eq: main: convex hull of terminal states}
\end{equation}
Then, the collection of points $\{\Bar{x}_{i,k}\}^{n_k, N}_{i,k=1}$ yields a candidate upper bound $\Bar{g}:\R^n\rightarrow \R$ defined by
\begin{align}
    \Bar{g}(x) &\doteq \min_{k\in\{1,\ldots,N\}}\Bar{g}_k(x), \label{eq: main: upper bound for g}\\
    \Bar{g}_k(x) &\doteq L_g d(x;\,\Bar{\Omega}_k) + \gamma_k,
    \label{eq: main: upper bound for g via each level set}
\end{align}
in which $L_g\geq 0$ is the Lipschitz constant of $g$ and $d(\cdot;\,\Bar{\Omega}_k)$ is the distance to $\Bar{\Omega}_k$. An illustrative depiction of $\Bar{g}$ for a 1D example is presented in Fig. \ref{fig: main: upper bound for terminal cost}. The following is a restatement of \cite[Lem 1]{liu2024efficient}.
\begin{lemma} Let Assumption \ref{assumption: prelim: Mayer cost condition} hold. The function $\Bar{g}:\R^n\rightarrow \R$ defined in \eqref{eq: main: upper bound for g} is an upper bound for $g$ for all $x\in\R^n$.
\label{lemma: main: upper bound for g}
\end{lemma}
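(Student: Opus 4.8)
The plan is to show that for each fixed $k \in \{1,\ldots,N\}$, the function $\Bar{g}_k$ defined in \eqref{eq: main: upper bound for g via each level set} satisfies $\Bar{g}_k(x) \geq g(x)$ for all $x \in \R^n$; since $\Bar{g} = \min_k \Bar{g}_k$ by \eqref{eq: main: upper bound for g}, the claim $\Bar{g}(x) \geq g(x)$ follows immediately, as the minimum of finitely many upper bounds is still an upper bound. So the problem reduces to a single-level statement.

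Fix $k$ and $x \in \R^n$. First I would pick a point $\xi^\star \in \Bar{\Omega}_k$ achieving the distance, i.e.\ $\norm{x - \xi^\star} = d(x;\,\Bar{\Omega}_k)$; such a minimiser exists because $\Bar{\Omega}_k$ is a convex hull of finitely many points, hence compact and non-empty, and $\xi \mapsto \norm{x - \xi}$ is continuous. Next I would use the Lipschitz property of $g$ from Assumption \ref{assumption: prelim: Mayer cost condition} to write
\begin{equation*}
    g(x) \leq g(\xi^\star) + L_g \norm{x - \xi^\star} = g(\xi^\star) + L_g\, d(x;\,\Bar{\Omega}_k).
\end{equation*}
It then remains to bound $g(\xi^\star)$ by $\gamma_k$. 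Since $\xi^\star \in \Bar{\Omega}_k = \text{conv}(\{\Bar{x}_{i,k}\}^{n_k}_{i=1})$, write $\xi^\star = \sum_{i=1}^{n_k}\lambda_i \Bar{x}_{i,k}$ with $\lambda_i \in [0,1]$ and $\sum_i \lambda_i = 1$. By convexity of $g$ (Assumption \ref{assumption: prelim: Mayer cost condition}), Jensen's inequality gives $g(\xi^\star) \leq \sum_{i=1}^{n_k}\lambda_i\, g(\Bar{x}_{i,k})$. But each $\Bar{x}_{i,k} \in g^{-1}(\{\gamma_k\})$ by construction, so $g(\Bar{x}_{i,k}) = \gamma_k$ for every $i$, whence $g(\xi^\star) \leq \sum_i \lambda_i \gamma_k = \gamma_k$. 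Combining the two displayed bounds yields $g(x) \leq L_g\, d(x;\,\Bar{\Omega}_k) + \gamma_k = \Bar{g}_k(x)$, as required.

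There is no serious obstacle here; the argument is a routine chain of Lipschitz continuity followed by Jensen's inequality, and the role of each hypothesis is clean — convexity of $g$ handles the convex combination on $\Bar{\Omega}_k$, the level-set membership collapses each vertex value to $\gamma_k$, and the Lipschitz constant transports the bound from $\Bar{\Omega}_k$ to the ambient point $x$. The only point requiring minor care is the existence of the distance-achieving point $\xi^\star$; alternatively one can avoid invoking a minimiser by taking a sequence $\xi_j \in \Bar{\Omega}_k$ with $\norm{x - \xi_j} \to d(x;\,\Bar{\Omega}_k)$, applying the same estimate, and passing to the limit using continuity of $g$. Since this is stated as a restatement of \cite[Lem 1]{liu2024efficient}, I expect the authors to present it in essentially this form or simply cite the prior work.
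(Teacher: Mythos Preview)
Your argument is correct and is the natural one: Lipschitz continuity transports $g$ from $x$ to the nearest point of $\Bar{\Omega}_k$, and convexity together with the level-set membership of the vertices forces $g \leq \gamma_k$ on $\Bar{\Omega}_k$. The paper itself does not give a proof; it simply cites \cite[Lem~1]{liu2024efficient}, exactly as you anticipated in your closing remark.
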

Consequently, selecting $\Bar{v}(T,x) = \Bar{g}(x)$ ensures that $\Bar{v}(T,x)$ is an upper bound for the terminal data $v(T,x) = g(x)$ as required by Theorem \ref{theorem: prelim: comparison result}. 
\begin{figure}[ht!]
    \centering
    \begin{subfigure}[t]{0.45\textwidth}
        \centering
        \includegraphics[width = \columnwidth]{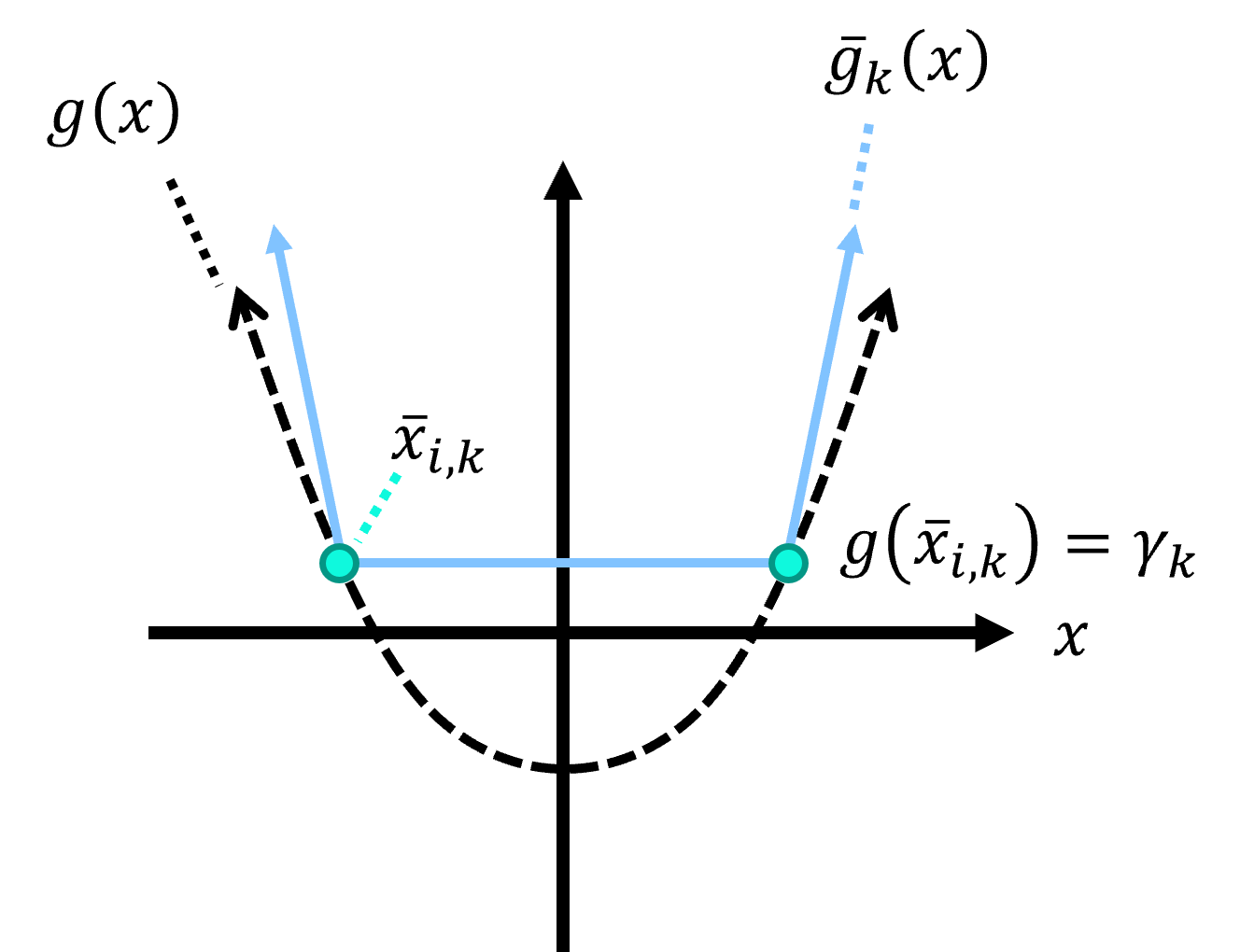}
        \caption{$\Bar{g}_k$}
        \label{subfig: main: single upper bound}
    \end{subfigure}%
    \hspace{0.3cm}
    \begin{subfigure}[t]{0.45\textwidth}
        \centering
        \includegraphics[width = \columnwidth]{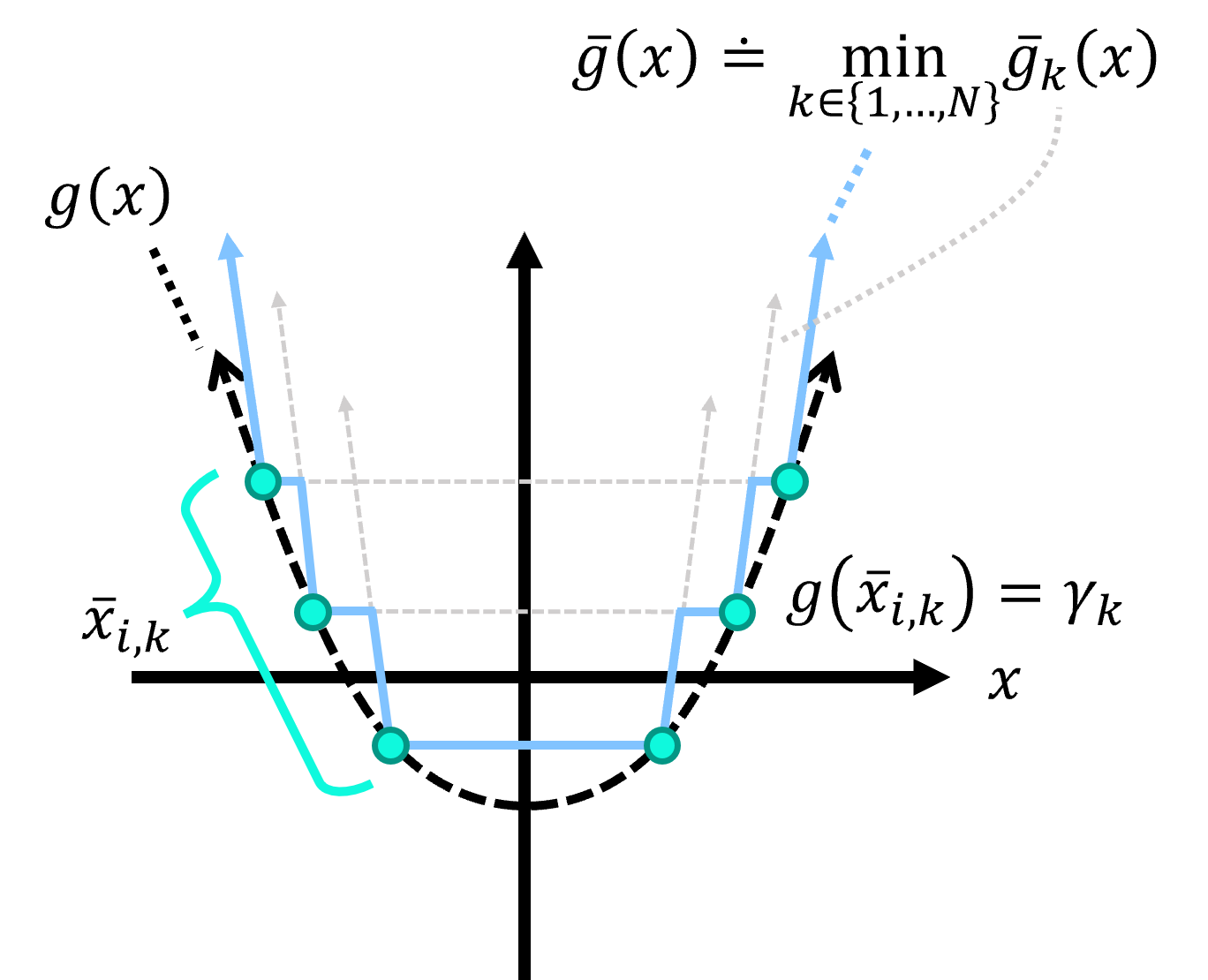}
        \caption{Minimum over $\Bar{g}_k$}
        \label{subfig: main: multiple upper bound}
    \end{subfigure}
    \caption{Illustration of the proposed upper bound $\Bar{g}$ in \eqref{eq: main: upper bound for g}, which is the minimum over the individual upper bounds $\Bar{g}_k$ in \eqref{eq: main: upper bound for g via each level set}. Points $\Bar{x}_{i,k}$, which characterise the upper bound, lie on the $\gamma_k$-level set of $g$.}
    \label{fig: main: upper bound for terminal cost}
\end{figure}

Next, consider the terminal value problem
\begin{equation}
    \begin{split}
        \dot{\xi}(s) &= A(s)\xi(s)+\omega(s), \quad \text{a.e. } s\in(t,T), \\
        \xi(T) &= x,
    \end{split}
    \label{eq: main: LTV system with trimmed input}
\end{equation}
in which $\xi(s)\in\R^n$ is the state and $\omega(s) \in \R^n$ is a `trimmed' control input both at time $s$. Assume that the control input $\omega$ is selected from the set
\begin{equation}
    \sW[t,T]   \doteq \left\{\omega:[t,T]\rightarrow \R^n\;| \;\omega \text{ measurable, } \omega(s)\in\W(s) \: \forall s\in[t,T] \right\},
    \label{eq: main: trimmed controls}
\end{equation}
in which the trimmed input set $\W(s)$ is defined in \eqref{eq: prelim: trimmed input set def}. Note that $\sW[t,T]$ is non-empty under Assumption \ref{assumption: prelim: trimmed input condition}, which roughly translates to requiring that player I has greater control authority than player II. Solutions of \eqref{eq: main: LTV system with trimmed input}, which are denoted by $\xi^{\omega}_{T,x}(s)$ at time $s$, will be used to parameterise the desired upper bound $\Bar{v}$. In particular, associate with each $k\in\{1,\ldots,N\}$ a collection of $n_k$ solutions of \eqref{eq: main: LTV system with trimmed input}
\begin{equation}
\xi^{\star}_{i,k}(s) \doteq \xi^{\omega_{i,k}}_{T,\Bar{x}_{i,k}}(s), \quad \forall s\in[t,T],
    \label{eq: main: collection of solutions}
\end{equation}
in which $\omega_{i,k}\in\sW[t,T]$ for all $i\in\{1,\ldots,n_k\}$ and the collection $\{\Bar{x}_{i,k}\}^{n_k}_{i=1}\subseteq g^{-1}\left(\{\gamma_k\}\right)$ defines the terminal condition of each solution $\xi^{\star}_{i,k}(\cdot)$ of \eqref{eq: main: LTV system with trimmed input}. Consider also a time-dependent distance function $d_A(\cdot,\cdot;\,\Omega):[0,T]\times\R^{n}\rightarrow \R$, which is defined by
\begin{equation}
    d_{A}(t,x;\,\Omega) \doteq \inf_{\xi \in \Omega}\norm{\Phi_A(T,t)(x - \xi)},
    \label{eq: main: time-dependent dist function def}
\end{equation}
in which $\Omega\subset \R^n$ is a given set and $\Phi_A(\cdot,\cdot)$ denotes the state-transition matrix associated with the dynamics $\dot{\xi}(s) = A(s)\xi(s)$. For each $k\in\{1,\ldots,N\}$, the $n_k$ solutions $\xi^{\star}_{i,k}(\cdot)$ of \eqref{eq: main: LTV system with trimmed input} may be used to construct a candidate upper bound $\Bar{v}_k:[0,T]\times\R^n\rightarrow \R$ by defining, 
\begin{equation}
    \begin{split}
     \Bar{v}_k(t,x) &\doteq L_g d_{A}(t,x;\,\Omega^\star_k(t)) + \gamma_k, \quad \forall (t,x)\in[0,T]\times\R^n,\\
    \Omega^\star_k(t) &\doteq \text{conv}\left( \{\xi^{\star}_{i,k}(t)\}^{n_k}_{i=1}\right).
    \end{split}
    \label{eq: main: upper-bounding surface kth level set}
\end{equation}
Then, the minimisation over each function $\Bar{v}_k$ defined by
\begin{equation}
    \Bar{v}(t,x) \doteq \min_{k\in\{1,\ldots,N\}}\Bar{v}_k(t,x),\:\, \forall (t,x)\in[0,T]\times\R^{n}.
    \label{eq: main: upper-bounding surface}
\end{equation}
yields another upper bound of $v$. The upper bound guarantees will follow from the comparison result in Theorem \ref{theorem: prelim: comparison result} by demonstrating that $\Bar{v}$ is a viscosity supersolution of \eqref{eq: prelim: HJI}.
\begin{lemma} Let Assumption \ref{assumption: prelim: trimmed input condition} hold. Then, $\Bar{v}$ as defined in \eqref{eq: main: upper-bounding surface} is a continuous viscosity supersolution of \eqref{eq: prelim: HJI}.
\label{lemma: main: viscosity supersolution via scaled and shifted distance functions}
\end{lemma}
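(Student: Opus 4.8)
The plan is to verify the supersolution inequality directly from Definition \ref{def: prelim: visc solution}, using the test-function characterisation of $D^-$ (\cite[Lem 1.7, p.29]{bardi1997optimal}): for every $(t_0,x_0)\in(0,T)\times\R^n$ and every continuously differentiable $\phi$ such that $\Bar{v}-\phi$ attains a local minimum at $(t_0,x_0)$, one must show $-\phi_t(t_0,x_0)+H(t_0,x_0,\nabla\phi(t_0,x_0))\geq 0$. The idea is to exhibit, at each such point, a trajectory of the game \eqref{eq: prelim: linear system} along which $\Bar{v}$ does not increase; player I generates it by a ``tracking'' control that forces the game state to shadow one of the solutions $\xi^\star_{i,k}$. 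Continuity of $\Bar{v}$ is obtained first by writing $d_A(t,x;\,\Omega^\star_k(t)) = \min_{\lambda}\norm{\Phi_A(T,t)\big(x-\sum^{n_k}_{i=1}\lambda_i\xi^\star_{i,k}(t)\big)}$, a minimum over the compact unit simplex of a function jointly continuous in $(t,x,\lambda)$; hence each $\Bar{v}_k$, and $\Bar{v}=\min_k\Bar{v}_k$, lies in $\sC([0,T]\times\R^n;\,\R)$.

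Fix $\phi$ and $(t_0,x_0)$ as above, and without loss of generality take $\Bar{v}(t_0,x_0)=\phi(t_0,x_0)$. Pick $k^\star$ attaining $\min_k\Bar{v}_k(t_0,x_0)$ and, since $\Omega^\star_{k^\star}(t_0)$ is compact, pick $\lambda^\star$ in the simplex attaining $d_A(t_0,x_0;\,\Omega^\star_{k^\star}(t_0))$; set $\eta^\star(s)\doteq\sum_i\lambda^\star_i\xi^\star_{i,k^\star}(s)$. Because $\W(s)$ is convex, $\eta^\star$ solves \eqref{eq: main: LTV system with trimmed input} with control $\omega^\star(s)\doteq\sum_i\lambda^\star_i\omega_{i,k^\star}(s)\in\W(s)$. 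Under Assumption \ref{assumption: prelim: trimmed input condition}, $\omega^\star(s)-E(s)d\in B(s)\U$ for every $d\in\D$ by definition of the Minkowski difference, so fixing any $d^\star\in\D$ a measurable selection produces a measurable $u^\star(s)\in\U$ with $B(s)u^\star(s)+E(s)d^\star=\omega^\star(s)$. The trajectory $\xi\doteq\xi^{u^\star,d^\star}_{t_0,x_0}$ of \eqref{eq: prelim: linear system} then obeys $\dot\xi(s)-\dot\eta^\star(s)=A(s)(\xi(s)-\eta^\star(s))$, hence $\Phi_A(T,s)(\xi(s)-\eta^\star(s))=\Phi_A(T,t_0)(x_0-\eta^\star(t_0))$ for all $s\in[t_0,T]$. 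Since $\eta^\star(s)\in\Omega^\star_{k^\star}(s)$ and $\lambda^\star$ attains the distance at $t_0$, this gives $\Bar{v}(s,\xi(s))\leq\Bar{v}_{k^\star}(s,\xi(s))\leq L_g\norm{\Phi_A(T,s)(\xi(s)-\eta^\star(s))}+\gamma_{k^\star}=\Bar{v}_{k^\star}(t_0,x_0)=\Bar{v}(t_0,x_0)$, i.e.\ $\Bar{v}$ is non-increasing along $\xi$.

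With this monotonicity, for small $h>0$ the point $(s,\xi(s))$ stays in the neighbourhood where $\phi\leq\Bar{v}$, so $\phi(s,\xi(s))\leq\Bar{v}(s,\xi(s))\leq\phi(t_0,x_0)$ on $[t_0,t_0+h]$; integrating $\tfrac{d}{ds}\phi(s,\xi(s))$ and dividing by $h$ yields $\tfrac1h\int_{t_0}^{t_0+h}\big[\phi_t(s,\xi(s))+\iprod{\nabla\phi(s,\xi(s))}{A(s)\xi(s)+\omega^\star(s)}\big]\,ds\leq 0$. As $h\downarrow 0$ the integrand equals $\phi_t(t_0,x_0)+\iprod{\nabla\phi(t_0,x_0)}{A(t_0)x_0}+\iprod{\nabla\phi(t_0,x_0)}{\omega^\star(s)}+o(1)$ uniformly on $[t_0,t_0+h]$ by continuity and boundedness of $\omega^\star$, so the averaged inequality passes to the limit to give $\phi_t(t_0,x_0)+\iprod{\nabla\phi(t_0,x_0)}{A(t_0)x_0+\bar\omega}\leq 0$, where $\bar\omega\in\W(t_0)$ is a cluster point of $\tfrac1h\int_{t_0}^{t_0+h}\omega^\star(s)\,ds$ (such cluster points lie in $\W(t_0)$ by compactness and outer semicontinuity of $s\mapsto\W(s)$). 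Writing $p\doteq\nabla\phi(t_0,x_0)$ and $\sigma_S(\cdot)\doteq\sup_{y\in S}\iprod{\cdot}{y}$, separability of \eqref{eq: prelim: hamiltonian} gives $H(t_0,x_0,p)=\iprod{-p}{A(t_0)x_0}+\sigma_{B(t_0)\U}(-p)-\sigma_{-E(t_0)\D}(-p)$, and the elementary bound $\sigma_{\cA\ominus\cB}(\cdot)\leq\sigma_{\cA}(\cdot)-\sigma_{\cB}(\cdot)$ applied to $\W(t_0)=B(t_0)\U\ominus(-E(t_0)\D)$ gives $\sigma_{B(t_0)\U}(-p)-\sigma_{-E(t_0)\D}(-p)\geq\sigma_{\W(t_0)}(-p)\geq\iprod{-p}{\bar\omega}$. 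Combining, $-\phi_t(t_0,x_0)+H(t_0,x_0,p)\geq\iprod{p}{A(t_0)x_0+\bar\omega}+\iprod{-p}{A(t_0)x_0}+\iprod{-p}{\bar\omega}=0$, which is the required supersolution inequality.

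The routine ingredients — continuity of $\Bar{v}$, existence of a measurable tracking selection, and the support-function inequality for the Minkowski difference — are standard. The step demanding genuine care is the passage to the limit $h\downarrow 0$ with a merely measurable control $\omega^\star$: one must justify both that the cross term $\iprod{\nabla\phi(s,\xi(s))-\nabla\phi(t_0,x_0)}{\omega^\star(s)}$ averages to zero and that the averaged control may be taken inside $\W(t_0)$, each of which relies on boundedness of $\W(\cdot)$ over $[0,T]$ together with its continuity in $s$.
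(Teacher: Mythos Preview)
Your proof is correct and follows essentially the same route as the paper: pick a closest point $\eta^\star(t_0)\in\Omega^\star_{k^\star}(t_0)$, propagate it under the convex-combined trimmed control to show $\bar v$ is non-increasing along the resulting trajectory, pass to the limit to obtain $-\phi_t+\langle-p,A(t_0)x_0+\bar\omega\rangle\geq 0$ for some $\bar\omega\in\W(t_0)$, and then use the Minkowski-difference structure of $\W(t_0)$ to recover $H$. The packaging differs only cosmetically---test functions in place of subdifferentials, handling $\bar v$ directly rather than each $\bar v_k$ followed by ``min of supersolutions is a supersolution'', and the support-function inequality $\sigma_{\cA\ominus\cB}\leq\sigma_\cA-\sigma_\cB$ in place of the paper's explicit selection of $\tilde u\in\U$---and the detour through a game trajectory $\xi^{u^\star,d^\star}$ is unnecessary since $\dot\xi=A\xi+\omega^\star$ is all you actually use.
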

\begin{proof}
First note that since $\U$ and $\D$ are closed, and $s\mapsto B(s)$ and $s\mapsto E(s)$ are continuous, by Assumption \ref{assumption: prelim: trimmed input condition}, the multifunction $s\mapsto \W(s)$ admits a measurable selection (see \cite[Thm A.5.2 and Prop A.5.3, p.292]{CS:04}). In particular, $\sW[t,T]$ is non-empty so the solutions $\xi^\star_{i,k}$ of \eqref{eq: main: LTV system with trimmed input}, which parameterise $\Bar{v}$, exist and $\Bar{v}$ is well-defined.

   \emph{Subclaim (a): For all $k\in\{1,\ldots,N\}$, $\Bar{v}_k$ as defined in \eqref{eq: main: upper-bounding surface kth level set} is a continuous viscosity supersolution of \eqref{eq: prelim: HJI}.} Continuity of $\Bar{v}_k$ is demonstrated in Lemma \ref{lemma: continuity of upper-bounding surface kth level set} in the appendix. Fix any $k\in\{1,\ldots,N\}$ and any $(t_0, x_0) \in (0,T)\times\R^{n}$ such that $D^-\Bar{v}_k(t_0,x_0)$ is non-empty. Let
    \begin{equation}
        \hat{x}_{0} \in \argmin_{\xi \in \Omega^{\star}_k(t_0)}\norm{\Phi_A(T,t_0)(x_0-\xi)}.
         \label{eq: main: viscosity supersolution proof 1}
    \end{equation}
    Note that the minimising set is non-empty due to continuity of $\xi \mapsto \norm{\Phi_A(T,t_0)(x_0-\xi)}$ and compactness of $\Omega^\star_k(t_0)$. By definition of $\Omega^\star_k(t_0)$ in \eqref{eq: main: upper-bounding surface kth level set},
    \begin{equation*}
        \hat{x}_{0} =  \sum^{n_k}_{i=1}\hat{\lambda}_i\xi^\star_{i,k}(t_0),\quad \{\hat{\lambda}_i\}^{n_k}_{i=1}\subset [0,1], \quad  \sum^{n_k}_{i=1}\hat{\lambda}_i=1.
    \end{equation*}
Let $\hat{\omega} \doteq \sum^{n_k}_{i=1}\hat{\lambda}_i\omega_{i,k}$ in which $\omega_{i,k}\in\sW[t_0,T]$ is the control associated with each solution $\xi^\star_{i,k}$. By convexity of $\W(t)$ (see Lemma \ref{lemma: convexity of trimmed input set} in the appendix), $\hat{\omega}(t) \in \W(t)$ for all $t\in[t_0,T]$. As any finite linear combination of measurable functions is measurable, $\hat{\omega} \in \sW[t_0,T]$. Let $\xi_0 \doteq \xi^{\hat{\omega}}_{t_0,x_0}$ and $\hat{\xi}_0\doteq\xi^{\hat{\omega}}_{t_0,\hat{x}_0}$ be solutions of \eqref{eq: main: LTV system with trimmed input} under the control $\hat{\omega}$ from $(t_0, x_0)$ and $(t_0, \hat{x}_0)$, respectively. Then, fix $t \in [t_0,T]$ to obtain
\begin{align*}
   \hat{\xi}_0(t) &= \Phi_A(t,t_0)\hat{x}_0 + \integral{t_0}{t}{\Phi_A(t,s)\hat{\omega}(s)}{s},\nn\\
   &=  \sum^{n_k}_{i=1}\hat{\lambda}_i\left(\Phi_A(t,t_0)\xi^\star_{i,k}(t_0) + \integral{t_0}{t}{\Phi_A(t,s)\omega_{i,k}(s)}{s}\right). \
\end{align*}
   Thus, $\hat{\xi}_0(t) = \sum^{n_k}_{i=1}\hat{\lambda}_i\xi^\star_{i,k}(t)$, which implies
    \begin{equation}
       \xi_0(t) - \sum^{n_k}_{i=1}\hat{\lambda}_i\xi^\star_{i,k}(t) = \xi_0(t)-\hat{\xi}_0(t)= \Phi_A(t,t_0)(x_0 - \hat{x}_0).\label{eq: main: viscosity supersolution proof 2}
   \end{equation}
   Selecting $\sum^{n_k}_{i=1}\hat{\lambda}_i\xi^{\star}_{i,k}(t)\in \Omega^\star_k(t)$ for $\xi$ in \eqref{eq: main: upper-bounding surface kth level set} followed by a substitution of \eqref{eq: main: viscosity supersolution proof 2} yields, 
   \begin{align}
       d_{A}\left(t,\xi_0(t);\,\Omega^{\star}_k(t)\right)&\leq \Big\lVert\Phi_A(T,t)\Big(\xi_0(t)-\sum^{n_k}_{i=1}\hat{\lambda}_i\xi^{\star}_{i,k}(t)\Big)\Big\rVert  \label{eq: main: viscosity supersolution proof 3} \\
       &= \big\lVert\Phi_A(T,t_0)(x_0 -\hat{x}_0)\big\rVert = d_{A}(t_0,x_0;\,\Omega^{\star}_k(t_0)), 
   \end{align}
   in which the last equality follows by \eqref{eq: main: viscosity supersolution proof 1}. Hence, by \eqref{eq: main: viscosity supersolution proof 3} and the definition of $\Bar{v}_k$ in \eqref{eq: main: upper-bounding surface kth level set}, 
   \begin{equation}
       \Bar{v}_k(t,\xi_0(t)) - \Bar{v}_k(t_0,x_0)= L_g[d_{A}\left(t,\xi_0(t);\,\Omega^{\star}_k(t)\right) - d_{A}(t_0,x_0;\,\Omega^{\star}_k(t_0))] \leq 0, \quad\forall t\in[t_0,T]. \label{eq: main: viscosity supersolution proof 4}
   \end{equation}

    Next, fix any $(q, p)\in D^{-}\Bar{v}_k(t_0,x_0)$ and define $h:[0,T]\times\R^n\rightarrow \R$ by
    \begin{equation*}
        h(t,x) \doteq \Bar{v}_k(t,x)-\Bar{v}_k(t_0,x_0)-\iprod{(q,p)}{(t,x)-(t_0,x_0)}.
    \end{equation*}
    From Definition \ref{def: prelim: reg subdifferentials and superdifferentials},
\begin{equation}
\liminf_{(t,x)\rightarrow(t_0,x_0)}\frac{h(t,x)}{\norm{(t,x)-(t_0,x_0)}}  \geq 0.
    \label{eq: main: viscosity supersolution proof 5}
\end{equation}
Taking the limit in \eqref{eq: main: viscosity supersolution proof 5} along $\xi_0(\cdot)$ and noting that $\xi_0(t_0) = x_0$ yields 
\begin{equation}
        \liminf_{t\rightarrow t_0}\frac{h(t,\xi_0(t))}{\lVert(t,\xi_0(t))-(t_0,x_0)\rVert}\geq  \liminf_{(t,x)\rightarrow (t_0,x_0)} \frac{h(t,x)}{\norm{(t,x)-(t_0,x_0)}}.  \label{eq: main: viscosity supersolution proof 6}
\end{equation}\normalsize
By similar arguments to \cite[Lem 7.1.2(i), p.187]{CS:04}, given $r > 0$, there exists $R> 0$ such that $\big\lVert A(t)\xi_0(t)+\hat{\omega}(t)\big\rVert \leq R$ for all $t\in\B_r(t_0)\cap [0,T]$. Thus, $\lVert \xi_0(t) - x_0 \rVert \leq R |t - t_0|$, which implies 
\begin{equation}
    |t - t_0| \leq \big\lVert(t,\xi_0(t))-(t_0,x_0)\big\rVert \leq \sqrt{1+R^2}|t - t_0|, \label{eq: main: viscosity supersolution proof 7}
\end{equation}
for all $t\in \B_r(t_0)\cap[0,T]$. 
Since $\eta(\epsilon)\doteq \inf_{t\in\B_{\epsilon}(t_0)\backslash\{t_0\}}\frac{h(t,\xi_0(t))}{\lVert (t,\xi_0(t)) - (t_0,x_0)\rVert}$ is non-decreasing as $\epsilon \rightarrow 0^+$, from \eqref{eq: main: viscosity supersolution proof 5}-\eqref{eq: main: viscosity supersolution proof 6}, either: (i) $\eta(\epsilon) > 0$ for sufficiently small $\epsilon > 0$ or (ii) $\eta(\epsilon) \leq 0$ for all $\epsilon > 0$. In case (i), $\inf_{t\in\B_{\epsilon}(t_0)\backslash\{t_0\}}\frac{h(t,\xi_0(t))}{|t-t_0|} \geq \eta(\epsilon)$ for sufficiently small $\epsilon$, which follows from the left-hand inequality in \eqref{eq: main: viscosity supersolution proof 7}. In case (ii), $\inf_{t\in\B_{\epsilon}(t_0)\backslash\{t_0\}}\frac{h(t,\xi_0(t))}{\sqrt{1+R^2}|t-t_0|} \geq \eta(\epsilon)$ for all sufficiently small $\epsilon$, which follows from the right-hand inequality in \eqref{eq: main: viscosity supersolution proof 7}. Thus,
\begin{equation}
    \liminf_{t\rightarrow t_0}\frac{h(t,\xi_0(t))}{|t-t_0|} \geq \kappa \lim_{\epsilon \rightarrow 0^+}\eta(\epsilon) = \kappa \liminf_{t\rightarrow t_0}\frac{h(t,\xi_0(t))}{\lVert(t,\xi_0(t))-(t_0,x_0)\rVert},
        \label{eq: main: viscosity supersolution proof 8}
\end{equation}
in which $\kappa = 1$ for case (i) or $\kappa = \sqrt{1+R^2}$ for case (ii). For either case, by \eqref{eq: main: viscosity supersolution proof 5}, \eqref{eq: main: viscosity supersolution proof 6}, and \eqref{eq: main: viscosity supersolution proof 8}, 
\begin{equation}
\liminf_{t\rightarrow t_0^+}\frac{h(t,\xi_0(t))}{(t-t_0)} \geq  \liminf_{t\rightarrow t_0}\frac{h(t,\xi_0(t))}{|t-t_0|} \geq 0,
    \label{eq: main: viscosity supersolution proof 9}
\end{equation}
in which the left inequality in \eqref{eq: main: viscosity supersolution proof 9} follows by taking the infimum over a smaller set. Expanding $h$ in the left side of \eqref{eq: main: viscosity supersolution proof 9} and using \eqref{eq: main: viscosity supersolution proof 4} yields
\begin{equation}
    -q + \liminf_{t\rightarrow t_0^+}\Big\langle -p, \textstyle{\frac{\xi_0(t) -x_0}{t-t_0}}\Big\rangle\geq 0.
    \label{eq: main: viscosity supersolution proof 10}
\end{equation}
Next, given any $\epsilon >0$, there exists $\delta_1 > 0$ such that, $A(t)\xi_0(t) \subseteq A(t_0)x_0 \oplus \B_{\epsilon \slash 2}(0)$ whenever $|t-t_0| < \delta_1$, which follows by continuity of $A$ and solutions of \eqref{eq: main: LTV system with trimmed input}. Similarly, continuity of the set-valued map $t\mapsto\W(t)$ (see Lemma \ref{lemma: continuity of trimmed control set} in the appendix) implies that there exists $\delta_2>0$ such that $\W(t)\subseteq \W(t_0)\oplus\B_{\epsilon\slash 2}(0)$ for all $|t-t_0| < \delta_2$. Thus, 
\begin{equation}
    \textstyle{\frac{1}{t- t_0}}\integral{t_0}{t}{A(s)\xi_0(s)+\hat{\omega}(s)}{s} \in \left\{A(t_0)x_0\right\}\oplus \W(t_0)\oplus\B_{\epsilon}(0),
    \label{eq: main: viscosity supersolution proof 11}
\end{equation}
whenever $|t-t_0| < \min(\delta_1, \delta_2)$, which follows by convexity of the set on the right side of \eqref{eq: main: viscosity supersolution proof 11} and \cite[Thm A.127, p.282]{CS:04}. 
By \eqref{eq: main: LTV system with trimmed input}, \eqref{eq: main: viscosity supersolution proof 11}, and the fundamental theorem of calculus,
\begin{equation}
    \liminf_{t\rightarrow t_0^+} \frac{\xi_0(t) - x_0}{t- t_0} \in \left\{A(t_0)x_0\right\}\oplus \W(t_0) .
     \label{eq: main: viscosity supersolution proof 12}
\end{equation}
Substituting \eqref{eq: main: viscosity supersolution proof 12} into \eqref{eq: main: viscosity supersolution proof 10}, for some $\Tilde{\omega}\in\W(t_0)$, 
\begin{equation}
-q + \langle -p, A(t_0)x_0 + \Tilde{\omega}\rangle \geq 0.
\label{eq: main: viscosity supersolution proof 13}
\end{equation}
Let $\Tilde{d}\in \argmin_{d\in\D}\langle-p,E(t_0)d \rangle$. By definition of the Minkowski difference, there exists $\Tilde{u} \in \U$ such that $\Tilde{\omega}-E(t_0)\Tilde{d} = B(t_0)\Tilde{u}$. Thus,
\begin{align*}
   -q+H(t_0,x_0,p) &= -q + \max_{u\in\U}\langle-p, A(t_0)x_0 + B(t_0)u\rangle +  \min_{d\in\D}\langle-p,E(t_0)d\rangle \nn\\
   &\geq -q+\langle -p, A(t_0)x_0 + B(t_0)\Tilde{u}\rangle + \min_{d\in\D}\langle-p,E(t_0)d \rangle\nn \\
   &=-q + \langle -p, A(t_0)x_0 + \Tilde{\omega} -E(t_0)\Tilde{d} + E(t_0)\Tilde{d}\rangle \geq 0,
\end{align*}
in which the last inequality follows by \eqref{eq: main: viscosity supersolution proof 13}. As $k\in\{1,\ldots,N\}$, $(t_0,x_0)\in(0,T)\times\R^{n}$, and $(q,p)\in D^{-}\Bar{v}_k(t_0,x_0)$ are arbitrary, by Definition \ref{def: prelim: visc solution}, subclaim (a) holds.

Finally, the minimisation over any finite collection of continuous viscosity supersolutions of \eqref{eq: prelim: HJI} yields another continuous viscosity supersolution (apply induction to \cite[Prop 2.1, p.34]{bardi1997optimal}). Thus, $\Bar{v}$ in \eqref{eq: main: upper-bounding surface} is a continuous viscosity supersolution of \eqref{eq: prelim: HJI}.
\end{proof}

\begin{theorem} Let Assumptions \ref{assumption: prelim: Mayer cost condition}-\ref{assumption: prelim: trimmed input condition} hold. Then, $\Bar{v}$ as defined in \eqref{eq: main: upper-bounding surface} is an upper bound for the value function $v$ in \eqref{eq: prelim: value function} for all $(t,x)\in[0,T]\times\R^n$. 
    \label{theorem: main: upper-bounding the value function}
\end{theorem}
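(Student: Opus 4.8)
The plan is to combine the two lemmas of this subsection with the comparison principle in Theorem~\ref{theorem: prelim: comparison result}, which requires three ingredients: that $\Bar{v}$ is continuous on $[0,T]\times\R^n$, that $\Bar{v}$ is a viscosity supersolution of \eqref{eq: prelim: HJI}, and that $\Bar{v}$ dominates $v$ at the terminal time $T$. The first two are supplied directly by Lemma~\ref{lemma: main: viscosity supersolution via scaled and shifted distance functions}, which uses Assumption~\ref{assumption: prelim: trimmed input condition} to guarantee that the parameterising solutions $\xi^\star_{i,k}$ exist and that $\Bar{v}$ is well-defined and continuous. So the only substantive step is the terminal comparison.

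To establish the terminal comparison I would evaluate $\Bar{v}$ at $t = T$. Since $\Phi_A(T,T) = \I_n$, the time-dependent distance in \eqref{eq: main: time-dependent dist function def} collapses to the Euclidean distance, i.e.\ $d_A(T,x;\,\Omega) = d(x;\,\Omega)$; and by the terminal condition in \eqref{eq: main: collection of solutions}, $\xi^\star_{i,k}(T) = \Bar{x}_{i,k}$, so that $\Omega^\star_k(T) = \text{conv}(\{\Bar{x}_{i,k}\}^{n_k}_{i=1}) = \Bar{\Omega}_k$. Substituting into \eqref{eq: main: upper-bounding surface kth level set} gives $\Bar{v}_k(T,x) = L_g\,d(x;\,\Bar{\Omega}_k) + \gamma_k = \Bar{g}_k(x)$ for each $k$, and minimising over $k$ yields $\Bar{v}(T,x) = \Bar{g}(x)$ for every $x\in\R^n$. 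Lemma~\ref{lemma: main: upper bound for g} (using Assumption~\ref{assumption: prelim: Mayer cost condition}) then gives $\Bar{v}(T,x) = \Bar{g}(x) \geq g(x) = v(T,x)$ on $\R^n$.

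Finally, by Theorem~\ref{theorem: prelim: value function and HJI equation} the value function $v$ is the unique continuous viscosity solution of \eqref{eq: prelim: HJI with terminal condition}, hence in particular a continuous viscosity subsolution of \eqref{eq: prelim: HJI}. Applying Theorem~\ref{theorem: prelim: comparison result} with $(\Bar{v},\underline{v}) = (\Bar{v},v)$ and the terminal inequality just established yields $\Bar{v}(t,x) \geq v(t,x)$ for all $(t,x)\in[0,T]\times\R^n$, which is the claim. No genuine obstacle remains once the two lemmas are in hand; the point meriting the most care is the identification $\Bar{v}(T,\cdot) = \Bar{g}$, which relies solely on $\Phi_A(T,T) = \I_n$ together with the fact that the auxiliary solutions $\xi^\star_{i,k}$ terminate at the points $\Bar{x}_{i,k}$ defining $\Bar{g}$.
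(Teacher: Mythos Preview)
Your proposal is correct and follows essentially the same approach as the paper: verify $\Bar{v}(T,\cdot)=\Bar{g}$ via $\Phi_A(T,T)=\I_n$ and $\xi^\star_{i,k}(T)=\Bar{x}_{i,k}$, invoke Lemma~\ref{lemma: main: upper bound for g} for the terminal inequality, then combine Lemma~\ref{lemma: main: viscosity supersolution via scaled and shifted distance functions} with Theorem~\ref{theorem: prelim: value function and HJI equation} and the comparison principle in Theorem~\ref{theorem: prelim: comparison result}. Your write-up is in fact slightly more explicit than the paper's in justifying the identification $\Bar{v}_k(T,\cdot)=\Bar{g}_k$.
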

\begin{proof}
    From \eqref{eq: main: time-dependent dist function def}, \eqref{eq: main: upper-bounding surface kth level set}, \eqref{eq: main: upper bound for g via each level set}, and \eqref{eq: main: convex hull of terminal states}, for any $k\in\{1,\ldots,N\}$,
    \begin{align}
        \Bar{v}_k(T,x) &= L_g d_{A}(T,x;\,\Omega^\star_k(T)) + \gamma_k = L_g d(x;\,\Bar{\Omega}_k) + \gamma_ k =\Bar{g}_k(x).\label{eq: main: upper bound for value function proof 1}
    \end{align}
    Taking the minimum of \eqref{eq: main: upper bound for value function proof 1} over $k\in\{1,\ldots,N\}$ and using Lemma \ref{lemma: main: upper bound for g}, $\Bar{v}(T,x) = \Bar{g}(x) \geq g(x)$ for all $x\in\R^n$. Noting that $v$ in \eqref{eq: prelim: value function} is a viscosity solution of \eqref{eq: prelim: HJI} (Theorem \ref{theorem: prelim: value function and HJI equation}) and hence a viscosity subsolution of \eqref{eq: prelim: HJI} (Definition \ref{def: prelim: visc solution}), by Lemma \ref{lemma: main: viscosity supersolution via scaled and shifted distance functions} and Theorem \ref{theorem: prelim: comparison result}, $\Bar{v}(t,x) \geq v(t,x)$ for all $(t,x)\in[0,T]\times\R^n$.
\end{proof}

The solutions $\{\xi^{\star}_{i,k}(\cdot)\}^{n_k,N}_{i,k=1}$ parameterise the proposed upper bound $\Bar{v}$, and hence determine its degree of conservatism. By definition of $\Bar{v}_k$ in \eqref{eq: main: upper-bounding surface kth level set}, it can be shown that each solution $\xi^\star_{i,k}$ remains on the $\gamma_k$-level set of $\Bar{v}_k$. If these solutions are selected such that they also lie on the $\gamma_k$-level set of $v$ as they evolve backwards in time, then the upper bound will coincide with the value function, yielding a tight upper bound. To this end, consider the terminal value problem 
\begin{gather}
\left\{\;
\begin{split}
    &\Dot{\xi}^\star(s) = A(s)\xi^\star(s) + \omega^\star(s), \quad \Dot{\lambda}(s) = -A'(s)\lambda(s), \quad \text{a.e. } s\in(t,T), \\
    &\omega^\star(s) \in \argmax_{\omega\in\W(s)}\iprod{-\lambda(s)}{\omega},\\
     &\xi^\star(T) = \Bar{x}, \quad \lambda(T) \in D^{-}g(\Bar{x}).
    \end{split}\right. 
    \label{eq: main: PMP trimmed input dynamics}
\end{gather}
\begin{remark}
    The subdifferential of $g$ at $\Bar{x}$, i.e., $D^-g(\Bar{x})$, is defined analogously via Definition \ref{def: prelim: reg subdifferentials and superdifferentials} with $x\in\R^n$ replacing $(t,x)\in (0,T)\times\R^n$ and $p\in\R^n$ replacing $(q, p)\in\R\times\R^n$. As $g$ is convex (by Assumption \ref{assumption: prelim: Mayer cost condition}), $D^-g(\Bar{x})$ is non-empty for all $\Bar{x}\in\R^n$ (see, e.g., \cite[Prop 8.12 and Thm 8.13, pp.308-309]{rockafellar2009variational}).
    \label{remark: main: subdifferential of g}
\end{remark}
Before demonstrating that \eqref{eq: main: PMP trimmed input dynamics} generates the desired set of solutions, it will be convenient to first demonstrate that under Assumptions \ref{assumption: prelim: trimmed input condition}-\ref{assumption: prelim: recoverability of player 1 set} the value function $v$ coincides with a corresponding value function $w:[0,T]\times\R^n\rightarrow\R$ defined for \eqref{eq: main: LTV system with trimmed input} via
\begin{equation}
    w(t,x)\doteq \inf_{\omega\in\sW[t,T]}g\left(\xi^{\omega}_{t,x}(T)\right), \quad \forall (t,x)\in[0,T]\times\R^n.
    \label{eq: main: value function for trimmed input system}
\end{equation}

\begin{lemma} Let Assumptions \ref{assumption: prelim: trimmed input condition}-\ref{assumption: prelim: recoverability of player 1 set} hold and consider $v$ and $w$ as defined in \eqref{eq: prelim: value function} and \eqref{eq: main: value function for trimmed input system}, respectively. Then,
\begin{enumerate}[(a)]
    \item \label{item: equivalence of value functions 1} $w\in \sC((0,T)\times\R^n;\,\R)$ is a viscosity solution of 
    \begin{equation}
-w_t(t,x)+\cH(t,x,\nabla w(t,x)) = 0, \quad \forall(t,x)\in(0,T)\times\R^n,
    \label{eq: main: equivalence of value functions proof 1}
\end{equation}
in which $\cH:[0,T]\times\R^n\times\R^n\rightarrow\R$ is defined by
\begin{equation}
    \cH(t,x,p) \doteq \max_{\omega\in \W(t)}\iprod{-p}{A(t)x+\omega};
    \label{eq: main: equivalence of value functions proof 2}
\end{equation}
\item \label{item: equivalence of value functions 2} $H(t,x,p) = \cH(t,x,p)$ for all $(t,x,p)\in[0,T]\times\R^n\times\R^n$; and 
\item \label{item: equivalence of value functions 3} $v(t,x) = w(t,x)$ for all $(t,x)\in[0,T]\times\R^n$.
\end{enumerate}
    \label{lemma: main: equivalence of value functions}
\end{lemma}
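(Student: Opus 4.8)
The plan is to establish the three parts in the order stated, with part (c) following by combining (a) and (b).

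For part (a), I would recognise $w$ in \eqref{eq: main: value function for trimmed input system} as the value function of a single-player Mayer optimal control problem for the LTV dynamics $\dot{\xi}(s)=A(s)\xi(s)+\omega(s)$ with the time-varying control constraint $\omega(s)\in\W(s)$. By Assumption \ref{assumption: prelim: trimmed input condition} the constraint sets are non-empty; they are compact and convex, being the Minkowski difference of the compact convex sets $B(s)\U$ and $-E(s)\D$ (convexity is also recorded in Lemma \ref{lemma: convexity of trimmed input set}); and $s\mapsto\W(s)$ is continuous (Lemma \ref{lemma: continuity of trimmed control set}). Together with continuity and local Lipschitzness of $x\mapsto A(s)x$, these are the standard hypotheses under which the dynamic programming principle holds and the value function is a continuous viscosity solution on $(0,T)\times\R^n$ of the associated Hamilton--Jacobi--Bellman equation, whose Hamiltonian for a minimising Mayer problem is exactly $\cH(t,x,p)=\max_{\omega\in\W(t)}\iprod{-p}{A(t)x+\omega}$ as in \eqref{eq: main: equivalence of value functions proof 2} (see \cite[Chap.~III]{bardi1997optimal}, \cite[Chap.~7]{CS:04}). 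Equivalently, part (a) is the specialisation of Theorem \ref{theorem: prelim: value function and HJI equation} to the degenerate game in which player II is absent and player I's input enters directly as $\omega\in\W(t)$; the measurable-selection property of $s\mapsto\W(s)$ (already invoked in the proof of Lemma \ref{lemma: main: viscosity supersolution via scaled and shifted distance functions}) is what makes $\sW[t,T]$ an admissible control class.

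For part (b), this is a support-function identity. Writing $\sigma_S(q)\doteq\sup_{s\in S}\iprod{q}{s}$ and isolating the common term $\iprod{-p}{A(t)x}$, one has $\max_{u\in\U}\iprod{-p}{B(t)u}=\sigma_{B(t)\U}(-p)$ and $\min_{d\in\D}\iprod{-p}{E(t)d}=-\sigma_{-E(t)\D}(-p)$, so that $H(t,x,p)=\iprod{-p}{A(t)x}+\sigma_{B(t)\U}(-p)-\sigma_{-E(t)\D}(-p)$, while $\cH(t,x,p)=\iprod{-p}{A(t)x}+\sigma_{\W(t)}(-p)$. Hence $H=\cH$ is equivalent to $\sigma_{\W(t)}(-p)=\sigma_{B(t)\U}(-p)-\sigma_{-E(t)\D}(-p)$, which follows by taking support functions of both sides of the identity in Assumption \ref{assumption: prelim: recoverability of player 1 set}, namely $\W(t)\oplus(-E(t)\D)=B(t)\U$, and using additivity of the support function under Minkowski addition. (Without Assumption \ref{assumption: prelim: recoverability of player 1 set} only $\cH\le H$ would be available, since $\W(t)\oplus(-E(t)\D)\subseteq B(t)\U$ always holds by definition of the Minkowski difference.)

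For part (c), by (a) the function $w$ is a viscosity solution of \eqref{eq: main: equivalence of value functions proof 1} on $(0,T)\times\R^n$, and by (b) its Hamiltonian coincides with $H$, so $w$ is a viscosity solution of \eqref{eq: prelim: HJI}; moreover $w(T,x)=g(\xi^\omega_{T,x}(T))=g(x)$ since $\xi^\omega_{T,x}(T)=x$, matching the terminal data of $v$. Since $v$ is the unique continuous viscosity solution of \eqref{eq: prelim: HJI with terminal condition} (Theorem \ref{theorem: prelim: value function and HJI equation}), it then suffices to reconcile the boundary behaviour: continuity of $w$ up to $t=T$ follows from $\xi^\omega_{t,x}(T)=\Phi_A(T,t)x+\int_t^T\Phi_A(T,s)\omega(s)\,ds$ together with local boundedness of $s\mapsto\W(s)$ and Lipschitz continuity of $g$ (Assumption \ref{assumption: prelim: Mayer cost condition}), which give $|w(t,x)-g(x)|\le L_g\sup_{\omega}\norm{\xi^\omega_{t,x}(T)-x}\to0$; applying the comparison principle (Theorem \ref{theorem: prelim: comparison result}) on $[\epsilon,T]$ for each $\epsilon\in(0,T)$ to the pair $(v,w)$ in both orders then yields $v=w$. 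A purely game-theoretic route is also available: for $v\ge w$ one constructs a non-anticipative strategy for player II which, via a measurable selection justified by Assumption \ref{assumption: prelim: recoverability of player 1 set}, maps each $u\in\sU[t,T]$ to a $d$ with $B(\cdot)u(\cdot)+E(\cdot)d(\cdot)\in\W(\cdot)$ a.e., so every closed-loop terminal state is trimmed-reachable; for $v\le w$ one passes to the lower value (equal to $v$ by Remark \ref{remark: prelim: equivalence of lower and upper values}) and, for each $\omega\in\sW[t,T]$, lets player I play a measurable selection of $\{u\in\U:B(\cdot)u=\omega(\cdot)-E(\cdot)d(\cdot)\}$, which is non-empty by Assumption \ref{assumption: prelim: trimmed input condition} and forces $\dot{\xi}=A\xi+\omega$ against every $d$. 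I expect the main obstacle to be precisely this boundary bookkeeping at $t=T$ (and $t=0$) needed to bridge the open-interval viscosity statement of (a) with the closed-interval hypotheses of the uniqueness and comparison theorems, or, on the game-theoretic route, the measurable-selection arguments needed to upgrade the pointwise set inclusions to admissible (measurable, non-anticipative) strategies consistent with the information pattern of Assumption \ref{assumption: prelim: strategies}.
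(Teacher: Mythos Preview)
Your proposal is correct, and for parts (a) and (c) it matches the paper's route: part (a) by recognising $w$ as a single-player Mayer value and invoking the dynamic programming/HJB theory of \cite{CS:04} (the paper does precisely this, sketching the super- and subsolution verifications explicitly to accommodate the time-varying constraint set $\W(s)$, which is the one nonstandard feature you should be mindful of since most textbook statements assume a fixed control set), and part (c) by uniqueness via Theorem \ref{theorem: prelim: value function and HJI equation} once $w(T,\cdot)=g$. Your attention to boundary continuity at $t=T$ is more careful than the paper, which simply cites \cite[Thm 7.2.3]{CS:04} for continuity of $w$; your alternative game-theoretic route via measurable selections is not needed but is a valid independent argument.

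The genuinely different piece is part (b). The paper argues elementwise: it fixes optimisers $\omega^\star,u^\star,d^\star$, uses the Minkowski-difference inclusion $\omega^\star-E(t)d^\star\in B(t)\U$ to obtain $\cH\le H$, and then invokes Assumption \ref{assumption: prelim: recoverability of player 1 set} to write $B(t)u^\star=\tilde\omega-E(t)\tilde d$ for some $\tilde\omega\in\W(t)$, $\tilde d\in\D$, concluding $H\le\cH$. Your support-function calculus collapses both inequalities into one line via $\sigma_{\W(t)\oplus(-E(t)\D)}=\sigma_{\W(t)}+\sigma_{-E(t)\D}$ applied to the set equality in Assumption \ref{assumption: prelim: recoverability of player 1 set}. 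Both are correct; yours is cleaner and makes explicit that (b) is \emph{equivalent} to that set identity, while the paper's pointwise argument is more self-contained and does not rely on any external support-function facts.
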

\begin{proof}
\underline{Assertion (\ref{item: equivalence of value functions 1}).} The proof proceeds similarly to \cite[Thm 7.2.4, p.194]{CS:04} so some steps will be omitted.  Using $\sW[t,T]$ as the set of admissible controls, the same arguments as \cite[Thm 7.2.2, p.192]{CS:04} can be used to demonstrate that $w$ satisfies the dynamic programming principle
\begin{equation}
    w(t,x) = \inf_{\omega\in\sW[t,s]}w\left(s,\xi^{\omega}_{t,x}(s)\right), \quad \forall t\in[0,T],\: s\in[t,T],\: x\in\R^n.
    \label{eq: main: equivalence of value functions proof 3}
\end{equation}
Moreover, by Assumption \ref{assumption: prelim: Mayer cost condition}, the same arguments as \cite[Thm 7.2.3, p.193]{CS:04} may be used to demonstrate continuity of $w$.

Fix $(t_0,x_0)\in(0,T)\times\R^n$, $(q,p)\in D^-w(t_0,x_0)$, and $\Tilde{\omega}\in\sW[t_0,T]$. By the same arguments as \eqref{eq: main: viscosity supersolution proof 12},
\begin{equation}
    \liminf_{t\rightarrow t_0^+} \frac{\xi^{\Tilde{\omega}}_{t_0,x_0}(t) - x_0}{t - t_0} \in \{A(t_0)x_0\}\oplus \W(t_0).
\end{equation}
Thus, there exists $\Bar{\omega}\in\W(t_0)$ such that
\begin{equation}
-q + \cH(t_0,x_0,p) \geq -q + \big\langle -p,  A(t_0)x_0+\Bar{\omega}\big\rangle = -q+\liminf_{t\rightarrow t_0^+} \Big\langle -p, \tfrac{\xi^{\Tilde{\omega}}_{t_0,x_0}(t) - x_0}{t - t_0}\Big\rangle . \label{eq: main: equivalence of value functions proof 4}
\end{equation}
As $\Tilde{\omega}\in\sW[t_0,T]$ is arbitrary, by \eqref{eq: main: equivalence of value functions proof 3}, it is possible to select $\Tilde{\omega}$ such that $ w\big(t,\xi^{\Tilde{\omega}}_{t_0,x_0}(t)\big) - w(t_0,x_0)\leq (t - t_0)^2$ for all $t\in[t_0,T]$. From \eqref{eq: main: equivalence of value functions proof 4},
\begin{align}
    -q + \cH(t_0,x_0,p) &\geq \liminf_{t\rightarrow t_0^+} \Big\{t-t_0 + \Big\langle -(q,p), \tfrac{(t - t_0,\xi^{\Tilde{\omega}}_{t_0,x_0}(t) - x_0)}{t - t_0}\Big\rangle\Big\}\nn \\
    &\geq \liminf_{t\rightarrow t_0^+} \frac{w\big(t,\xi^{\Tilde{\omega}}_{t_0,x_0}(t)\big) - w(t_0,x_0) -\langle(q,p),(t-t_0,\xi^{\Tilde{\omega}}_{t_0,x_0}(t) - x_0)\rangle}{t-t_0}.\label{eq: main: equivalence of value functions proof 5}
\end{align}
The same arguments from \eqref{eq: main: viscosity supersolution proof 5}-\eqref{eq: main: viscosity supersolution proof 9} hold by replacing $\Bar{v}_k$ in \eqref{eq: main: viscosity supersolution proof 5} with $w$ and replacing $\xi_0(t)$ in \eqref{eq: main: viscosity supersolution proof 6}-\eqref{eq: main: viscosity supersolution proof 9} with $\xi^{\Tilde{\omega}}_{t_0,x_0}(t)$. By \eqref{eq: main: viscosity supersolution proof 9} and \eqref{eq: main: equivalence of value functions proof 5},
\begin{equation}
    -q + \cH(t_0,x_0,p) \geq 0,
    \label{eq: main: equivalence of value functions proof 6}
\end{equation}
which by Definition \ref{def: prelim: visc solution} implies that $w$ is a viscosity supersolution of \eqref{eq: main: equivalence of value functions proof 1}.

Now, fix $(q,p) \in D^+w(t_0,x_0)$ and let $\omega^\star \in \sW[t_0,T]$ be a control satisfying
\begin{equation}
    \omega^\star(s) \in \argmax_{\omega\in\W(s)}\langle-p,\omega\rangle, \quad \forall s\in[t_0,T].
    \label{eq: main: equivalence of value functions proof 7}
\end{equation}
Note that by Assumption \ref{assumption: prelim: trimmed input condition}, $\W(s)$ is non-empty and compact for all $s\in[t_0,T]$, and by continuity of $B$ and $E$, $\text{cl}\big(\bigcup_{s\in[t_0,T]}\W(s)\big)\subset\R^n$ is also compact and thus separable. As $(s,\omega)\mapsto \langle-p,\omega\rangle$ is continuous, by \cite[Thm 18.19, p.605]{AB2006infinite}, there exists a measurable selection $s\mapsto \omega^\star(s)$ on $[t_0,T]$ satisfying \eqref{eq: main: equivalence of value functions proof 7}. Next, by continuity of $A$, continuity of solutions of \eqref{eq: main: LTV system with trimmed input}, Lemma \ref{lemma: continuity of trimmed control set}, and Theorem \ref{theorem: continuity of supremal value functions}, the map $t\mapsto \cH\big(t,\xi^{\omega^\star}_{t_0,x_0}(t), p\big)$ is continuous. Thus, from \eqref{eq: main: equivalence of value functions proof 7},
\begin{align}
    \lim_{t\rightarrow t_0^+} \Big\langle -p, \tfrac{\xi^{\omega^\star}_{t_0,x_0}(t) - x_0}{t - t_0}\Big\rangle &=     \lim_{t\rightarrow t_0^+} \frac{1}{t-t_0} \Big\langle -p, \integral{t_0}{t}{A(s)\xi^{\omega^\star}_{t_0,x_0}(s)+\omega^\star(s)}{s}\Big\rangle \nn \\
    &= \lim_{t\rightarrow t_0^+} \frac{1}{t-t_0} \integral{t_0}{t}{\big\langle -p, A(s)\xi^{\omega^\star}_{t_0,x_0}(s)+\omega^\star(s)\big\rangle}{s} \nn \\
     &=     \lim_{t\rightarrow t_0^+} \frac{1}{t-t_0} \integral{t_0}{t}{\cH\big(s,\xi^{\omega^\star}_{t_0,x_0}(s),p\big)}{s}=\cH(t_0,x_0,p). \label{eq: main: equivalence of value functions proof 8}
\end{align}
By Definition \ref{def: prelim: reg subdifferentials and superdifferentials},
\begin{align}
    &\limsup_{t\rightarrow t_0^+}\frac{w\big(t,\xi^{\omega^\star}_{t_0,x_0}(t)\big)-w(t_0,x_0)-\iprod{(q,p)}{\big(t,\xi^{\omega^\star}_{t_0,x_0}(t)\big)-(t_0,x_0)}}{\norm{(t,\xi^{\omega^\star}_{t_0,x_0}(t))-(t_0,x_0)}}  \nn \\
    &\qquad \qquad \qquad \leq \limsup_{(t,x)\rightarrow(t_0,x_0)}\frac{w(t,x)-w(t_0,x_0)-\iprod{(q,p)}{(t,x)-(t_0,x_0)}}{\norm{(t,x)-(t_0,x_0)}}\leq 0 .\label{eq: main: equivalence of value functions proof 9}
\end{align}
By \eqref{eq: main: equivalence of value functions proof 3}, $w\big(t,\xi^{\omega^\star}_{t_0,x_0}(t)\big)-w(t_0,x_0) \geq 0$, so from \eqref{eq: main: equivalence of value functions proof 9},
\begin{equation}
     \limsup_{t\rightarrow t_0^+}\frac{-\iprod{(q,p)}{\big(t,\xi^{\omega^\star}_{t_0,x_0}(t)\big)-(t_0,x_0)}}{\norm{(t,\xi^{\omega^\star}_{t_0,x_0}(t))-(t_0,x_0)}}\leq 0 \label{eq: main: equivalence of value functions proof 10}.
\end{equation}
By \eqref{eq: main: viscosity supersolution proof 7} and \eqref{eq: main: equivalence of value functions proof 10} and using analogous arguments to \eqref{eq: main: viscosity supersolution proof 5}-\eqref{eq: main: viscosity supersolution proof 9},
\begin{equation}
    \limsup_{t\rightarrow t_0^+}\frac{-\iprod{(q,p)}{\big(t,\xi^{\omega^\star}_{t_0,x_0}(t)\big)-(t_0,x_0)}}{(t-t_0)}\leq 0. \label{eq: main: equivalence of value functions proof 11}
\end{equation}
Finally, by \eqref{eq: main: equivalence of value functions proof 11} and \eqref{eq: main: equivalence of value functions proof 8},
\begin{equation*}
    -q + \cH(t_0,x_0,p) \leq 0,
\end{equation*}
which implies that $w$ is a viscosity subsolution of \eqref{eq: main: equivalence of value functions proof 1}. Consequently, assertion (\ref{item: equivalence of value functions 1}) holds.

\underline{Assertion (\ref{item: equivalence of value functions 2}).} Fix $(t,x,p)\in [0,T]\times\R^n\times\R^n$ and let 
\begin{equation*}
    \omega^\star \in \argmax_{\omega\in\W(t)}\langle-p,\omega\rangle, \quad u^\star \in \argmax_{u\in\U}\langle-p,B(t)u\rangle, \quad d^\star \in \argmin_{d\in\D}\langle -p,E(t)d\rangle.
\end{equation*}
Since $\omega^\star - E(t)d^\star = B(t)\Tilde{u}$ for some $\Tilde{u}\in\U$, 
\begin{align}
    \cH(t,x,p) &=\langle -p, A(t)x+\omega^\star - E(t)d^\star + E(t)d^\star\rangle \nn \\
    &= \min_{d\in\D}\langle -p, A(t)x+B(t)\Tilde{u} + E(t)d\rangle \nn \\
    &\leq \max_{u\in\U}\min_{d\in\D}\langle -p, A(t)x+B(t)u + E(t)d\rangle = H(t,x,p).
    \label{eq: main: equivalence of value functions proof 12}
\end{align}
By Assumption \ref{assumption: prelim: recoverability of player 1 set}, $B(t)u^\star = \Tilde{\omega} -E(t)\Tilde{d}$ for some $\Tilde{\omega}\in\W(t)$ and $\Tilde{d}\in\D$. Since $d^\star$ minimises $\langle-p,E(t)d\rangle$, $\langle -p, E(t)d^\star\rangle \leq \langle -p,E(t)\Tilde{d}\rangle$. Consequently,
\begin{align}
    H(t,x,p) &= \langle -p, A(t)x+B(t)u^\star + E(t)d^\star\rangle\nn \\
    &=\langle -p, A(t)x+\Tilde{\omega} -E(t)\Tilde{d} + E(t)d^\star \rangle \nn \\
    &\leq \langle -p, A(t)x+\Tilde{\omega}\rangle \leq \max_{\omega\in\W(t)}\langle -p, A(t)x+\omega\rangle = \cH(t,x,p).
    \label{eq: main: equivalence of value functions proof 13}
\end{align}
By \eqref{eq: main: equivalence of value functions proof 12} and \eqref{eq: main: equivalence of value functions proof 13}, assertion (\ref{item: equivalence of value functions 2}) holds.

\underline{Assertion (\ref{item: equivalence of value functions 3}).} Noting that $w(T,x) = g(x)$ for all $x\in \R^n$, by assertions (\ref{item: equivalence of value functions 1}) and (\ref{item: equivalence of value functions 2}), $w$ is a viscosity solution to \eqref{eq: prelim: HJI with terminal condition}. Since \eqref{eq: prelim: HJI with terminal condition} admits the unique solution $v$ on $[0,T]\times\R^n$ (Theorem \ref{theorem: prelim: value function and HJI equation}), assertion (\ref{item: equivalence of value functions 3}) holds.
\end{proof}

\begin{corollary} Let Assumptions \ref{assumption: prelim: Mayer cost condition}-\ref{assumption: prelim: recoverability of player 1 set} hold and consider $v$ and $\Bar{v}$ as defined in \eqref{eq: prelim: value function} and \eqref{eq: main: upper-bounding surface}, respectively. Given $k\in\{1,\ldots,N\}$ and $i\in\{1,\ldots,n_k\}$, if $\xi^\star_{i,k}$, which characterises $\Bar{v}$, and its corresponding control $\omega_{i,k}$ satisfies the terminal value problem in \eqref{eq: main: PMP trimmed input dynamics} for any costate solution $\lambda_{i,k}$, then,
    \begin{equation}
    v\big(t,\xi^\star_{i,k}(t)\big) = \Bar{v}\big(t,\xi^\star_{i,k}(t)\big) = \gamma_k.
        \label{eq: main: upper bound is tight with exact value}
    \end{equation}
    \label{corollary: main: tight upper bound}
\end{corollary}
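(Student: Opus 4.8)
The plan is to show that, along the trajectory $\xi^\star_{i,k}$, both $v$ and $\Bar{v}$ are squeezed between $\gamma_k$ from above and $\gamma_k$ from below. For the upper squeeze I would note that for every $\tau$ in the interval on which $\xi^\star_{i,k}$ is defined one has $\xi^\star_{i,k}(\tau)\in\text{conv}(\{\xi^\star_{j,k}(\tau)\}^{n_k}_{j=1})=\Omega^\star_k(\tau)$, so choosing $\xi=\xi^\star_{i,k}(\tau)$ in \eqref{eq: main: time-dependent dist function def} gives $d_A(\tau,\xi^\star_{i,k}(\tau);\,\Omega^\star_k(\tau))=0$, hence $\Bar{v}_k(\tau,\xi^\star_{i,k}(\tau))=\gamma_k$ by \eqref{eq: main: upper-bounding surface kth level set} and $\Bar{v}(\tau,\xi^\star_{i,k}(\tau))\le\gamma_k$ by \eqref{eq: main: upper-bounding surface}. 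Since $v\le\Bar{v}$ (Theorem \ref{theorem: main: upper-bounding the value function}) and $v=w$ (Lemma \ref{lemma: main: equivalence of value functions}(\ref{item: equivalence of value functions 3})), this already yields $w(\tau,\xi^\star_{i,k}(\tau))=v(\tau,\xi^\star_{i,k}(\tau))\le\Bar{v}(\tau,\xi^\star_{i,k}(\tau))\le\gamma_k$, so the entire claim reduces to the lower bound $w(\tau,\xi^\star_{i,k}(\tau))\ge\gamma_k$.

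For the lower bound I would run the classical argument that the Pontryagin conditions in \eqref{eq: main: PMP trimmed input dynamics} are sufficient for global optimality of $\xi^\star_{i,k}$ in the trimmed single-player problem \eqref{eq: main: value function for trimmed input system}, exploiting convexity of $g$. Fix $\tau$ and an arbitrary $\omega\in\sW[\tau,T]$, let $\xi\doteq\xi^\omega_{\tau,\xi^\star_{i,k}(\tau)}$ be the corresponding solution of the forward dynamics in \eqref{eq: main: LTV system with trimmed input}, so $\xi(\tau)=\xi^\star_{i,k}(\tau)$, and note that the restrictions of $\omega_{i,k}$ and $\lambda_{i,k}$ to $[\tau,T]$ still satisfy \eqref{eq: main: PMP trimmed input dynamics} with unchanged terminal data. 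Differentiating $s\mapsto\langle\lambda_{i,k}(s),\xi(s)-\xi^\star_{i,k}(s)\rangle$ and using $\dot{\lambda}_{i,k}=-A'\lambda_{i,k}$ together with the two state equations, the $A$-terms cancel and one is left with the a.e. identity $\frac{d}{ds}\langle\lambda_{i,k}(s),\xi(s)-\xi^\star_{i,k}(s)\rangle=\langle\lambda_{i,k}(s),\omega(s)-\omega_{i,k}(s)\rangle$, which is $\ge 0$ because $\omega_{i,k}(s)$ maximises $\omega'\mapsto\langle-\lambda_{i,k}(s),\omega'\rangle$ over $\W(s)\ni\omega(s)$. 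Since $s\mapsto\langle\lambda_{i,k}(s),\xi(s)-\xi^\star_{i,k}(s)\rangle$ is absolutely continuous, integrating over $[\tau,T]$ and using $\xi(\tau)=\xi^\star_{i,k}(\tau)$ gives $\langle\lambda_{i,k}(T),\xi(T)-\Bar{x}_{i,k}\rangle\ge 0$. Then, since $g$ is convex (Assumption \ref{assumption: prelim: Mayer cost condition}) and $\lambda_{i,k}(T)\in D^-g(\Bar{x}_{i,k})$, which for convex $g$ makes $\lambda_{i,k}(T)$ a genuine subgradient of $g$ at $\Bar{x}_{i,k}$ (cf. Remark \ref{remark: main: subdifferential of g}), the subgradient inequality yields $g(\xi(T))\ge g(\Bar{x}_{i,k})+\langle\lambda_{i,k}(T),\xi(T)-\Bar{x}_{i,k}\rangle\ge g(\Bar{x}_{i,k})=\gamma_k$. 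Taking the infimum over $\omega\in\sW[\tau,T]$ in \eqref{eq: main: value function for trimmed input system} gives $w(\tau,\xi^\star_{i,k}(\tau))\ge\gamma_k$.

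Putting the two halves together yields $\gamma_k\le v(\tau,\xi^\star_{i,k}(\tau))=w(\tau,\xi^\star_{i,k}(\tau))\le\Bar{v}(\tau,\xi^\star_{i,k}(\tau))\le\gamma_k$, i.e. \eqref{eq: main: upper bound is tight with exact value}. I expect the main obstacle to be purely organisational, namely stating the Pontryagin-sufficiency step cleanly; the minor technical points — that \eqref{eq: main: PMP trimmed input dynamics} restricts to $[\tau,T]$, that $s\mapsto\langle\lambda_{i,k}(s),\xi(s)-\xi^\star_{i,k}(s)\rangle$ is absolutely continuous so the fundamental theorem of calculus applies to the a.e. derivative identity, and that the regular subdifferential $D^-g$ coincides with the convex subdifferential of $g$ — are routine and follow from the cited results.
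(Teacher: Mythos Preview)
Your proposal is correct and follows essentially the same approach as the paper: both establish $\Bar v(\tau,\xi^\star_{i,k}(\tau))\le\gamma_k$ via $\xi^\star_{i,k}(\tau)\in\Omega^\star_k(\tau)$, and both establish $w(\tau,\xi^\star_{i,k}(\tau))\ge\gamma_k$ by the standard Pontryagin-sufficiency argument (adjoint cancels the $A$-term, the maximality of $\omega_{i,k}$ gives the sign, and convexity of $g$ closes the subgradient inequality), then invoke Lemma~\ref{lemma: main: equivalence of value functions}(\ref{item: equivalence of value functions 3}) and Theorem~\ref{theorem: main: upper-bounding the value function}. The only cosmetic difference is that the paper computes $\langle\lambda_{i,k}(T),\hat\xi(T)-\xi^\star_{i,k}(T)\rangle$ via the state-transition matrix identity $\Phi'_A(T,s)\lambda_{i,k}(T)=\lambda_{i,k}(s)$, whereas you obtain the same integrand by differentiating $s\mapsto\langle\lambda_{i,k}(s),\xi(s)-\xi^\star_{i,k}(s)\rangle$ directly.
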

\begin{proof}
Let the tuple $(\xi^\star_{i,k},\lambda_{i,k},\omega_{i,k})$ satisfy \eqref{eq: main: PMP trimmed input dynamics}, in which $\xi^\star_{i,k}$ characterises $\Bar{v}$ in \eqref{eq: main: upper-bounding surface}, $\omega_{i,k}$ is its corresponding control, and $\lambda_{i,k}(s) = \Phi_{-A'}(s,T)\lambda_{i,k}(T)$ for all $s\in[t,T]$ is the corresponding costate solution with $\lambda_{i,k}(T)\in D^-g\big(\xi^\star_{i,k}(T)\big)$ fixed.
    
    \emph{Subclaim (a): $\Bar{v}\big(t,\xi^\star_{i,k}(t)\big) \leq \gamma_k$.} Consider $\Bar{v}_k$ as defined in \eqref{eq: main: upper-bounding surface kth level set}. Since $\xi^\star_{i,k}(t) \in \Omega^\star_k(t)$, 
    \begin{equation*}
        0 \leq \inf_{\xi \in \Omega^\star_k(t)}\norm{\Phi_A(T,t)(\xi^\star_{i,k}(t)-\xi)} \leq \norm{\Phi_A(T,t)(\xi^\star_{i,k}(t)-\xi^\star_{i,k}(t))} = 0.
    \end{equation*}
    Thus, $\Bar{v}_k\big(t,\xi^\star_{i,k}(t)\big) = \gamma_k$, which by \eqref{eq: main: upper-bounding surface} yields subclaim (a). 

    \emph{Subclaim (b): $v\big(t,\xi^\star_{i,k}(t)\big) = \gamma_k$.} Fix any $\hat{\omega}\in\sW[t,T]$ and let $\hat{\xi}(s) \doteq \xi^{\hat{\omega}}_{t,\xi^\star_{i,k}(t)}(s)$ for all $s\in[t,T]$. As $g$ is convex and $\lambda_{i,k}(T) \in D^-g(\xi^\star_{i,k}(T))$, by \cite[Prop 8.12, p.308]{rockafellar2009variational},
    \begin{equation}
        g(\hat{\xi}(T)) \geq g(\xi^\star_{i,k}(T)) + \langle\lambda_{i,k}(T),\hat{\xi}(T)-\xi^\star_{i,k}(T)\rangle.
       \label{eq: main: tight upper bound proof 1}
    \end{equation}
   Considering the second term on the right-hand side,
    \begin{align}
        \langle\lambda_{i,k}(T),\,\hat{\xi}(T)-\xi^\star_{i,k}(T)\rangle &= \Big\langle \lambda_{i,k}(T), \integral{t}{T}{\Phi_{A}(T, s)(\hat{\omega}(s)-\omega_{i,k}(s))}{s} \Big\rangle  \nn \\
        &= \integral{t}{T}{\langle -\Phi'_A(T,s)\lambda_{i,k}(T),\omega_{i,k}(s) -\hat{\omega}(s)\rangle}{s}\nn \\
        &= \integral{t}{T}{\langle -\Phi_{-A'}(s,T)\lambda_{i,k}(T),\omega_{i,k}(s) -\hat{\omega}(s)\rangle}{s} \nn\\
                &= \integral{t}{T}{\langle -\lambda_{i,k}(s),\omega_{i,k}(s) -\hat{\omega}(s)\rangle}{s}.      \label{eq: main: tight upper bound proof 2}
    \end{align}
    Since $\omega_{i,k}(s)$ maximises $\langle -\lambda_{i,k}(s),\,\omega\rangle$ on $\W(s)$, $\langle -\lambda_{i,k}(s),\omega_{i,k}(s) \rangle \geq \langle -\lambda_{i,k}(s),\hat{\omega}(s) \rangle$ for all $s\in[t,T]$. Consequently, by \eqref{eq: main: tight upper bound proof 1} and \eqref{eq: main: tight upper bound proof 2},
    \begin{equation}
        g(\hat{\xi}(T)) \geq g\left(\xi^\star_{i,k}(T)\right).
        \label{eq: main: tight upper bound proof 3}
    \end{equation}
    As $\hat{\omega}\in\sW[t,T]$ is arbitrary and $\xi^\star_{i,k}(T)=\Bar{x}_{i,k} \in g^{-1}\left(\{\gamma_k\}\right)$, taking the $\inf$ of \eqref{eq: main: tight upper bound proof 3} over $\sW[t,T]$ yields
    \begin{equation}
    w\big(t,\xi^\star_{i,k}(t)\big) = \inf_{\omega \in \sW[t,T]}g\big(\xi^{\omega}_{t,\xi^\star_{i,k}(t)}(T)\big) \geq g\big(\xi^\star_{i,k}(T)\big) = \gamma_k,
    \label{eq: main: tight upper bound proof 4}
    \end{equation} 
    in which $w$ is defined in \eqref{eq: main: value function for trimmed input system}. Since $\omega_{i,k}\in\sW[t,T]$, \eqref{eq: main: tight upper bound proof 4} implies $w\big(t,\xi^\star_{i,k}(t)\big) =\gamma_k$. By Lemma \ref{lemma: main: equivalence of value functions}(\ref{item: equivalence of value functions 3}), subclaim (b) holds. Then, by Theorem \ref{theorem: main: upper-bounding the value function} and subclaim (a), \eqref{eq: main: upper bound is tight with exact value} holds. 
\end{proof}

\subsection{Lower-bounding the Value Function}
\label{sec: lower-bounding the value function}
A lower bound for the value $v$ in \eqref{eq: prelim: value function} can be generated in an analogous manner to the upper bound in Section \ref{sec: upper-bounding the value function} by first constructing a lower bound for the Mayer cost $g$. To this end, consider the same $N$ levels $\{\gamma_k\}^N_{k=1}$ as Section \ref{sec: upper-bounding the value function} and their associated set of points $\{\Bar{x}_{i,k}\}^{n_k}_{i=s1}$, which each lie on the $\gamma_k$-level set of $g$. Attach to each point $\Bar{x}_{i,k}$ a subgradient $p_{i,k}\in D^-g(\Bar{x}_{i,k})$. Then, the collection of points $\{\Bar{x}_{i,k}\}^{n_k,N}_{i,k=1}$ and their corresponding subgradients $\{p_{i,k}\}^{n_k,N}_{i,k=1}$ may be used to construct a candidate lower bound $\underline{g}:\R^n\rightarrow\R$ defined by
\begin{equation}
    \underline{g}(x)\doteq \max_{k\in\{1,\ldots,N\}}\max_{i\in\{1,\ldots,n_k\}}\underline{g}_{i,k}(x), \quad \underline{g}_{i,k}(x) \doteq \iprod{p_{i,k}}{x} - \iprod{p_{i,k}}{\Bar{x}_{i,k}}+\gamma_k.
    \label{eq: main: lower bound for g}
\end{equation}
The individual functions $\underline{g}_{i,k}:\R^n\rightarrow\R$ are hyperplanes that support the epigraph of $g$ at $\Bar{x}_{i,k}$, which is depicted in Fig. \ref{fig: main: lower bound for terminal cost} for a 1D example.
\begin{figure}[ht!]
    \centering
    \begin{subfigure}[t]{0.39\textwidth}
        \centering
        \includegraphics[width = \columnwidth]{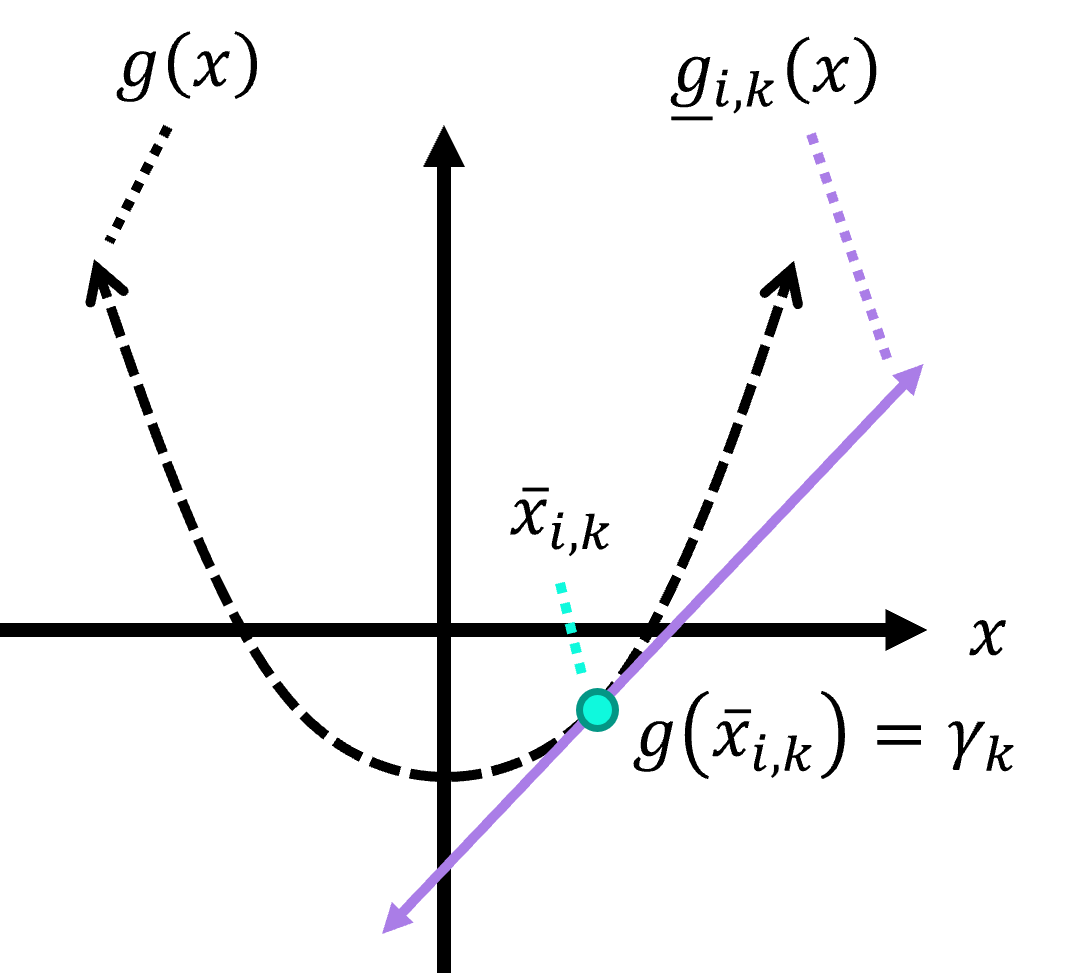}
        \caption{$\underline{g}_{i,k}$}
        \label{subfig: main: single lower bound}
    \end{subfigure}%
    \hspace{0.4cm}
    \begin{subfigure}[t]{0.44\textwidth}
        \centering
        \includegraphics[width = \columnwidth]{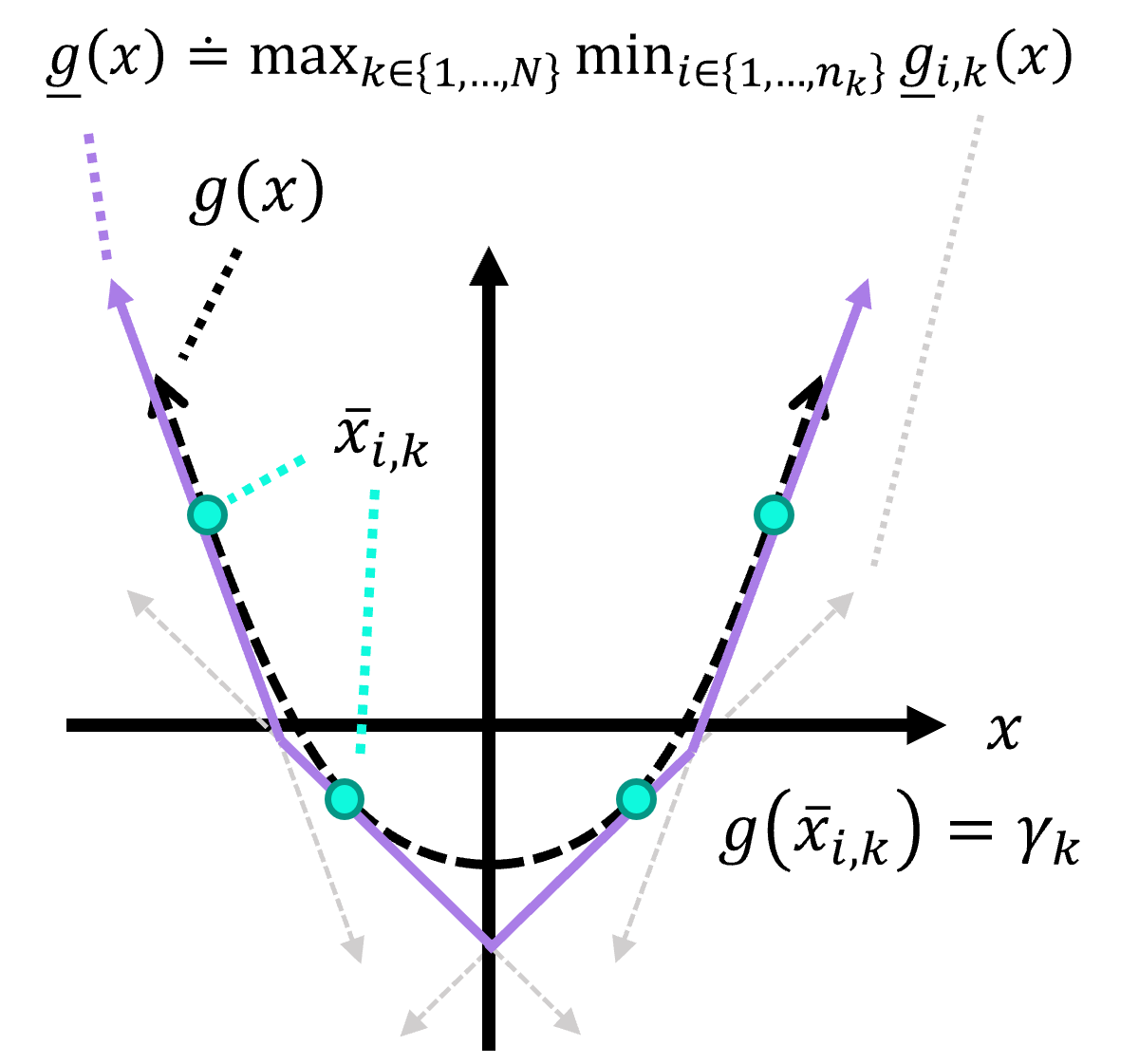}
        \caption{Maximum over $\underline{g}_{i,k}$}
        \label{subfig: main: multiple lower bound}
    \end{subfigure}
    \caption{Illustration of the proposed lower bound $\underline{g}$ in \eqref{eq: main: lower bound for g}, which is the maximum over the individual lower bounds $\underline{g}_{i,k}$. The hyperplanes $\underline{g}_{i,k}$ support the epigraph of $g$ at $\Bar{x}_{i,k}$.}
    \label{fig: main: lower bound for terminal cost}
\end{figure}
\begin{lemma} Let Assumption \ref{assumption: prelim: Mayer cost condition} hold. The function $\underline{g}:\R^n\rightarrow\R$ defined in \eqref{eq: main: lower bound for g} is a lower bound for $g$ for all $x\in\R^n$.
    \label{lemma: main: lower bound for g}
\end{lemma}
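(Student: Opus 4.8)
The plan is to show that each individual affine piece $\underline{g}_{i,k}$ is a \emph{global} minorant of $g$, and then to observe that a pointwise maximum of global minorants of $g$ is again a global minorant of $g$.

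First I would fix $k\in\{1,\ldots,N\}$ and $i\in\{1,\ldots,n_k\}$. By construction $\Bar{x}_{i,k}\in g^{-1}(\{\gamma_k\})$, so $g(\Bar{x}_{i,k}) = \gamma_k$, and $p_{i,k}\in D^-g(\Bar{x}_{i,k})$. Since $g$ is convex by Assumption \ref{assumption: prelim: Mayer cost condition}, the regular subdifferential $D^-g(\Bar{x}_{i,k})$ from Definition \ref{def: prelim: reg subdifferentials and superdifferentials} (adapted as in Remark \ref{remark: main: subdifferential of g}) coincides with the convex subdifferential, and hence the subgradient inequality holds \emph{globally}: for all $x\in\R^n$,
\[
g(x) \geq g(\Bar{x}_{i,k}) + \iprod{p_{i,k}}{x - \Bar{x}_{i,k}}
\]
(see \cite[Prop 8.12, p.308]{rockafellar2009variational}). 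Substituting $g(\Bar{x}_{i,k}) = \gamma_k$ and rearranging yields precisely $g(x) \geq \iprod{p_{i,k}}{x} - \iprod{p_{i,k}}{\Bar{x}_{i,k}} + \gamma_k = \underline{g}_{i,k}(x)$ for all $x\in\R^n$.

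Then I would take the maximum over $i\in\{1,\ldots,n_k\}$ and $k\in\{1,\ldots,N\}$ on the right-hand side: since the bound $g(x)\geq \underline{g}_{i,k}(x)$ holds for every pair $(i,k)$ and every $x$, it follows that $g(x) \geq \max_{k\in\{1,\ldots,N\}}\max_{i\in\{1,\ldots,n_k\}}\underline{g}_{i,k}(x) = \underline{g}(x)$ for all $x\in\R^n$ by \eqref{eq: main: lower bound for g}, which is the claim. (Note the Lipschitz hypothesis in Assumption \ref{assumption: prelim: Mayer cost condition} is not needed here; only convexity is used, which also guarantees that each $D^-g(\Bar{x}_{i,k})$ is non-empty so that the subgradients $p_{i,k}$ exist.) I do not anticipate any genuine obstacle: the only point worth stating carefully is the upgrade from the local, $\liminf$-type first-order condition defining $D^-g$ to the global subgradient inequality, which is exactly what convexity of $g$ buys.
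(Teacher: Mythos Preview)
Your proposal is correct and essentially identical to the paper's own proof: fix $(i,k)$, invoke the convex subgradient inequality (the paper cites \cite[Thm 8.12, p.308]{rockafellar2009variational}) to get $g(x)\geq \underline{g}_{i,k}(x)$, then take the maximum over all indices. Your parenthetical remarks that only convexity (not Lipschitz continuity) is used and that convexity ensures $D^-g(\Bar{x}_{i,k})\neq\varnothing$ are accurate and do not depart from the paper's approach.
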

\begin{proof} Fix $x\in\R^n$, $k\in\{1,\ldots,N\}$, and $i\in\{1,\ldots,n_k\}$. By \cite[Thm 8.12, p.308]{rockafellar2009variational}, $g(x) \geq g(\Bar{x}_{i,k}) + \iprod{p_{i,k}}{x-\Bar{x}_{i,k}} = \iprod{p_{i,k}}{x} -  \iprod{p_{i,k}}{\Bar{x}_{i,k}} + \gamma_k = \underline{g}_{i,k}(x)$. By \eqref{eq: main: lower bound for g}, the assertion follows by taking the maximum over the indices $k\in\{1,\ldots,N\}$ and $i\in\{1,\ldots,n_k\}$. 
\end{proof}

Next, consider the dynamics
\begin{equation}
        \dot{\lambda}(s) = -A'(s)\lambda(s),\quad \dot{q}(s) = \max_{u\in\U}\min_{d\in\D}\iprod{-\lambda(s)}{B(s)u + E(s)d},\quad \text{a.e. } s\in(t,T),
    \label{eq: main: dynamics for normal vec and offset term}
\end{equation}
and let $\lambda_{i,k}(s) \in \R^n$ and $q_{i,k}(s) \in \R$ denote solutions of \eqref{eq: main: dynamics for normal vec and offset term} at time $s\in[t,T]$ subject to the terminal conditions 
\begin{equation}
        \lambda_{i,k}(T) = p_{i,k} \in D^-g(\Bar{x}_{i,k}),\quad q_{i,k}(T) = -\iprod{p_{i,k}}{\Bar{x}_{i,k}} + \gamma_k,
    \label{eq: main: terminal condition for normal vec and offset term}
\end{equation}
for all $k\in\{1,\ldots,N\}$ and $i\in\{1,\ldots,n_k\}$. It will be shown that the proposed dynamics in \eqref{eq: main: dynamics for normal vec and offset term} ensure that each solution pair $(\lambda_{i,k},q_{i,k})$ define a supporting hyperplane to the epigraph of $x\mapsto v(t,x)$ for all $t\in[0,T]$. The maximisation over all such hyperplanes may be used as a candidate lower bound for $v$ in \eqref{eq: prelim: value function}. In particular, let $\underline{v}:[0,T]\times\R^n$ be defined by 
\begin{equation}
\begin{split}
    &\underline{v}(t,x) \doteq \max_{k\in\{1,\ldots,N\}}\max_{i\in\{1,\ldots,n_k\}}\underline{v}_{i,k}(t,x), \quad \forall (t,x)\in[0,T]\times\R^n,\\
    &\underline{v}_{i,k}(t,x)\doteq \iprod{\lambda_{i,k}(t)}{x} + q_{i,k}(t),
    \end{split}
    \label{eq: main: lower-bounding surface}
\end{equation}
which satisfies the desired terminal condition $\underline{v}(T,x) = \underline{g}(x) \leq g(x)$. The lower bound guarantee is attained by demonstrating that $\underline{v}$ is a viscosity subsolution of \eqref{eq: prelim: HJI}, which is analogous to Lemma \ref{lemma: main: viscosity supersolution via scaled and shifted distance functions}. 

\begin{lemma} Let Assumption \ref{assumption: prelim: Mayer cost condition} hold. The function $\underline{v}$ defined in \eqref{eq: main: lower-bounding surface} is a continuous viscosity subsolution of \eqref{eq: prelim: HJI}.
    \label{lemma: main: viscosity subsolution}
\end{lemma}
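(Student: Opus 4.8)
The plan is to mirror the structure of the proof of Lemma~\ref{lemma: main: viscosity supersolution via scaled and shifted distance functions}: first establish that each individual affine function $\underline{v}_{i,k}$ in \eqref{eq: main: lower-bounding surface} is a viscosity subsolution of \eqref{eq: prelim: HJI}, and then conclude by stability of viscosity subsolutions under finite maxima. Continuity of $\underline{v}$ is immediate once continuity of each $\underline{v}_{i,k}$ is in hand.

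First I would record the regularity of $\lambda_{i,k}$ and $q_{i,k}$. Since $\lambda_{i,k}$ solves the linear ODE $\dot\lambda = -A'(s)\lambda$, one has $\lambda_{i,k}(s) = \Phi_{-A'}(s,T)p_{i,k}$, which is continuously differentiable on $[0,T]$; and since $q_{i,k}$ is the integral (backward from $T$) of $s\mapsto \max_{u\in\U}\min_{d\in\D}\iprod{-\lambda_{i,k}(s)}{B(s)u+E(s)d}$, which is continuous in $s$ by continuity of $\lambda_{i,k}$, $B$, $E$, compactness of $\U$ and $\D$, and Theorem~\ref{theorem: continuity of supremal value functions}, the function $q_{i,k}$ is also continuously differentiable. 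Hence each $\underline{v}_{i,k}\in\sC^1([0,T]\times\R^n;\,\R)$ with $\nabla_x\underline{v}_{i,k}(t,x) = \lambda_{i,k}(t)$ and
\[
\partial_t\underline{v}_{i,k}(t,x) = \iprod{\dot\lambda_{i,k}(t)}{x} + \dot q_{i,k}(t) = \iprod{-\lambda_{i,k}(t)}{A(t)x} + \max_{u\in\U}\min_{d\in\D}\iprod{-\lambda_{i,k}(t)}{B(t)u+E(t)d},
\]
using the adjoint identity $\iprod{A'(t)\lambda_{i,k}(t)}{x} = \iprod{\lambda_{i,k}(t)}{A(t)x}$. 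Since $A(t)x$ is independent of $(u,d)$, the right-hand side equals $H(t,x,\lambda_{i,k}(t)) = H(t,x,\nabla_x\underline{v}_{i,k}(t,x))$ by \eqref{eq: prelim: hamiltonian}, so $\underline{v}_{i,k}$ satisfies \eqref{eq: prelim: HJI} classically and is therefore a viscosity solution, in particular a viscosity subsolution.

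It then remains to pass to the maximum. Here I would either invoke the statement dual to \cite[Prop~2.1, p.34]{bardi1997optimal} (finite maxima of subsolutions are subsolutions) by induction over the finitely many indices $(i,k)$, or argue directly: given $(t_0,x_0)\in(0,T)\times\R^n$ and $(q,p)\in D^+\underline{v}(t_0,x_0)$, choose $(i^\star,k^\star)$ with $\underline{v}_{i^\star,k^\star}(t_0,x_0) = \underline{v}(t_0,x_0)$; then $\underline{v}_{i^\star,k^\star} - \underline{v} \leq 0$ with equality at $(t_0,x_0)$ forces $(q,p)\in D^+\underline{v}_{i^\star,k^\star}(t_0,x_0)$, and the subsolution property of $\underline{v}_{i^\star,k^\star}$ gives $-q + H(t_0,x_0,p)\leq 0$. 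Since $(t_0,x_0)$ and $(q,p)$ are arbitrary, Definition~\ref{def: prelim: visc solution} yields the claim, while continuity of $\underline{v}$ follows since it is a finite maximum of continuous functions.

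I do not expect a serious obstacle: the lemma is essentially a verification, because each $\underline{v}_{i,k}$ is affine in $x$ and the $\dot q$ dynamics in \eqref{eq: main: dynamics for normal vec and offset term} are chosen precisely so that the $A(t)x$ term produced by differentiating $\iprod{\lambda_{i,k}(t)}{x}$ in $t$ matches the $A(t)x$ term inside the Hamiltonian. The only points needing mild care are (i) confirming the right-hand side of the $\dot q$ ODE is continuous in $s$, so that $q_{i,k}$ is genuinely continuously differentiable (via Theorem~\ref{theorem: continuity of supremal value functions}), and (ii) stating the finite-maximum stability property for subsolutions in the form dual to the minimum-of-supersolutions property already used in Lemma~\ref{lemma: main: viscosity supersolution via scaled and shifted distance functions}.
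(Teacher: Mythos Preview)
Your proposal is correct and follows essentially the same route as the paper: show each $\underline{v}_{i,k}$ is $\sC^1$ and satisfies \eqref{eq: prelim: HJI} classically (hence is a viscosity subsolution), then pass to the finite maximum via \cite[Prop~2.1, p.34]{bardi1997optimal}. The paper is slightly terser about the $\sC^1$ regularity (asserting only that the right-hand sides of \eqref{eq: main: dynamics for normal vec and offset term} are continuous in time) and additionally remarks that Assumption~\ref{assumption: prelim: Mayer cost condition} ensures $D^-g(\Bar{x}_{i,k})\neq\emptyset$ so that the terminal data in \eqref{eq: main: terminal condition for normal vec and offset term} are well-defined, a point you may wish to add.
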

\begin{proof}
First note that by Assumption \ref{assumption: prelim: Mayer cost condition} and Remark \ref{remark: main: subdifferential of g}, the set $D^-g(\Bar{x}_{i,k})$ in \eqref{eq: main: terminal condition for normal vec and offset term} is non-empty. Thus, the solution pairs $(\lambda_{i,k},q_{i,k})$, which characterise $\underline{v}$, exist and are well-defined. 

Fix $k\in\{1,\ldots,N\}$, $i\in\{1,\ldots,n_k\}$, and consider $\underline{v}_{i,k}$ defined in \eqref{eq: main: lower-bounding surface}. As the right-hand side of the dynamics in \eqref{eq: main: dynamics for normal vec and offset term} are continuous in time, $\underline{v}_{i,k}\in\sC^1((0,T)\times\R^n;\,\R)$. Next, fix $(t_0,x_0)\in(0,T)\times\R^n$. By \eqref{eq: main: dynamics for normal vec and offset term}, 
    \begin{align}
        &-\tfrac{\partial}{\partial t}\underline{v}_{i,k}(t_0,x_0) + \max_{u\in\U}\min_{d\in\D}\iprod{-\nabla \underline{v}_{i,k}(t_0,x_0)}{A(t_0)x_0 + B(t_0)u+E(t_0)d} \nn \\
        &\qquad =-\big\langle \dot{\lambda}_{i,k}(t_0),x_0\big\rangle -\big\langle\lambda_{i,k}(t_0),A(t_0)x_0\big\rangle  \nn \\
        &\qquad \qquad- \dot{q}_{i,k}(t_0) + \max_{u\in\U}\min_{d\in\D}\iprod{-\lambda_{i,k}(t_0)} {B(t_0)u+E(t_0)d} = 0. \label{eq: main: viscosity subsolution proof 1}
    \end{align}
     Consequently, $\underline{v}_{i,k}$ is a classical solution of \eqref{eq: prelim: HJI}, and hence a viscosity solution, which, by Definition \ref{def: prelim: visc solution}, also implies that it is a viscosity subsolution of \eqref{eq: prelim: HJI}. The assertion holds by \eqref{eq: main: lower-bounding surface} and noting that the maximisation over any finite collection of viscosity subsolutions of \eqref{eq: prelim: HJI} yields another viscosity subsolution (apply induction to \cite[Prop 2.1, p.34]{bardi1997optimal}).
\end{proof}

In addition to being a lower bound for the value function, $\underline{v}$ coincides with the value $v$ along solutions of \eqref{eq: main: PMP trimmed input dynamics} in which the costate trajectories are given by $\lambda_{i,k}$, which parameterise $\underline{v}$. 

\begin{theorem}
    Let Assumption \ref{assumption: prelim: Mayer cost condition} hold. Then,
    \begin{enumerate}[(a)]
        \item \label{item: main: lower bound result 1} $\underline{v}$ as defined in \eqref{eq: main: lower-bounding surface} is a lower bound for the value function $v$ in \eqref{eq: prelim: value function} for all $(t,x)\in[0,T]\times\R^n$.
        \item \label{item: main: lower bound result 2} Additionally, let Assumptions \ref{assumption: prelim: trimmed input condition}-\ref{assumption: prelim: recoverability of player 1 set} hold. Given $k\in\{1,\ldots,N\}$ and $i\in\{1,\ldots,n_k\}$, if $\lambda_{i,k}$, which characterises $\underline{v}$, satisfies the terminal value problem in \eqref{eq: main: PMP trimmed input dynamics} for any pair $(\xi^\star, \omega^\star)$ with $\xi^\star(T) = \Bar{x}_{i,k}$ and $\omega^\star\in\sW[t,T]$, then,
        \begin{equation}
             v\big(t, \xi^\star(t)\big) = \underline{v}\big(t, \xi^\star(t)\big) = \gamma_k.
            \label{eq: main: tight lower bound}
        \end{equation}
    \end{enumerate}
    \label{theorem: main: lower-bounding the value}
\end{theorem}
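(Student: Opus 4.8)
The plan is to follow the template of Theorem~\ref{theorem: main: upper-bounding the value function} and Corollary~\ref{corollary: main: tight upper bound}, substituting the hyperplane subsolution $\underline{v}$ for the distance-function supersolution $\Bar{v}$. For assertion~(\ref{item: main: lower bound result 1}), I would invoke Lemma~\ref{lemma: main: viscosity subsolution}, which states that $\underline{v}$ is a continuous viscosity subsolution of~\eqref{eq: prelim: HJI}, together with Theorem~\ref{theorem: prelim: value function and HJI equation}, which gives that $v$ is a viscosity solution of~\eqref{eq: prelim: HJI} and hence a viscosity supersolution. Since $\underline{v}(T,x)=\underline{g}(x)\le g(x)=v(T,x)$ for all $x\in\R^n$ by Lemma~\ref{lemma: main: lower bound for g}, the comparison result Theorem~\ref{theorem: prelim: comparison result}, applied with supersolution $v$ and subsolution $\underline{v}$, yields $v(t,x)\ge\underline{v}(t,x)$ on $[0,T]\times\R^n$.

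For assertion~(\ref{item: main: lower bound result 2}), fix $k$, $i$ and the pair $(\xi^\star,\omega^\star)$ as in the hypothesis, so that $(\xi^\star,\lambda_{i,k},\omega^\star)$ solves~\eqref{eq: main: PMP trimmed input dynamics} with $\lambda_{i,k}(T)=p_{i,k}\in D^-g(\Bar{x}_{i,k})$ and $\xi^\star(T)=\Bar{x}_{i,k}$; note the costate ODE in~\eqref{eq: main: PMP trimmed input dynamics} coincides with that in~\eqref{eq: main: dynamics for normal vec and offset term} and the terminal data agrees with~\eqref{eq: main: terminal condition for normal vec and offset term}, so the substantive requirement is only that $\omega^\star(s)$ be a maximiser of $\langle-\lambda_{i,k}(s),\cdot\rangle$ over $\W(s)$. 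I would then establish two facts and combine them. \emph{Fact~1: $v(t,\xi^\star(t))=\gamma_k$.} This reproduces subclaim~(b) of Corollary~\ref{corollary: main: tight upper bound} with $\xi^\star_{i,k},\omega_{i,k}$ replaced by $\xi^\star,\omega^\star$: for any $\hat\omega\in\sW[t,T]$, convexity of $g$ and $\lambda_{i,k}(T)\in D^-g(\xi^\star(T))$ give $g\big(\xi^{\hat\omega}_{t,\xi^\star(t)}(T)\big)\ge g(\xi^\star(T))+\langle\lambda_{i,k}(T),\xi^{\hat\omega}_{t,\xi^\star(t)}(T)-\xi^\star(T)\rangle$, and the inner-product term is non-negative because $\omega^\star(s)$ maximises $\langle-\lambda_{i,k}(s),\omega\rangle$ over $\W(s)$ (the same computation as in~\eqref{eq: main: tight upper bound proof 2}); taking the infimum over $\hat\omega$ and using $\omega^\star\in\sW[t,T]$ gives $w(t,\xi^\star(t))=\gamma_k$ for $w$ in~\eqref{eq: main: value function for trimmed input system}, whence Lemma~\ref{lemma: main: equivalence of value functions}(\ref{item: equivalence of value functions 3}) gives $v(t,\xi^\star(t))=w(t,\xi^\star(t))=\gamma_k$.

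\emph{Fact~2: $\underline{v}_{i,k}(t,\xi^\star(t))=\gamma_k$ for all $t\in[0,T]$.} I would differentiate $t\mapsto\langle\lambda_{i,k}(t),\xi^\star(t)\rangle+q_{i,k}(t)$ along the dynamics: with $\dot\lambda_{i,k}=-A'\lambda_{i,k}$ and $\dot\xi^\star=A\xi^\star+\omega^\star$ the terms involving $A$ cancel, leaving $\langle\lambda_{i,k}(t),\omega^\star(t)\rangle+\dot q_{i,k}(t)=-\max_{\omega\in\W(t)}\langle-\lambda_{i,k}(t),\omega\rangle+\max_{u\in\U}\min_{d\in\D}\langle-\lambda_{i,k}(t),B(t)u+E(t)d\rangle$, which vanishes by the Hamiltonian identity $H=\cH$ of Lemma~\ref{lemma: main: equivalence of value functions}(\ref{item: equivalence of value functions 2}) (after cancelling the common $\langle-\lambda_{i,k}(t),A(t)x\rangle$ term). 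The terminal condition~\eqref{eq: main: terminal condition for normal vec and offset term} gives $\langle\lambda_{i,k}(T),\xi^\star(T)\rangle+q_{i,k}(T)=\langle p_{i,k},\Bar{x}_{i,k}\rangle-\langle p_{i,k},\Bar{x}_{i,k}\rangle+\gamma_k=\gamma_k$, so the map is constant equal to $\gamma_k$, i.e.\ $\underline{v}_{i,k}(t,\xi^\star(t))=\gamma_k$. Combining Fact~2 with the definition~\eqref{eq: main: lower-bounding surface}, Fact~1, and assertion~(\ref{item: main: lower bound result 1}) then gives $\gamma_k=\underline{v}_{i,k}(t,\xi^\star(t))\le\underline{v}(t,\xi^\star(t))\le v(t,\xi^\star(t))=\gamma_k$, which is~\eqref{eq: main: tight lower bound}.

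I expect the main obstacle to be Fact~2 — specifically, recognising that the derivative of the candidate hyperplane along $\xi^\star$ vanishes, which hinges on the Hamiltonian equality $H=\cH$ from Lemma~\ref{lemma: main: equivalence of value functions}(\ref{item: equivalence of value functions 2}) and therefore on Assumptions~\ref{assumption: prelim: trimmed input condition}--\ref{assumption: prelim: recoverability of player 1 set} (which is why these are only needed for part~(\ref{item: main: lower bound result 2}) and not for the lower-bound inequality in part~(\ref{item: main: lower bound result 1})). Everything else is either a direct transcription of the upper-bound arguments or routine bookkeeping.
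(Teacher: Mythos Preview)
Your proposal is correct and follows essentially the same approach as the paper's proof: assertion~(\ref{item: main: lower bound result 1}) via the subsolution Lemma~\ref{lemma: main: viscosity subsolution}, the terminal inequality from Lemma~\ref{lemma: main: lower bound for g}, and the comparison Theorem~\ref{theorem: prelim: comparison result}; and assertion~(\ref{item: main: lower bound result 2}) via the two facts you identify (constancy of $\underline{v}_{i,k}$ along $\xi^\star$ using $H=\cH$ from Lemma~\ref{lemma: main: equivalence of value functions}(\ref{item: equivalence of value functions 2}), and $v(t,\xi^\star(t))=\gamma_k$ by transcribing subclaim~(b) of Corollary~\ref{corollary: main: tight upper bound}). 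The only cosmetic difference is that the paper presents your Fact~2 before your Fact~1.
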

\begin{proof}
    \underline{Assertion (\ref{item: main: lower bound result 1}).}   By \eqref{eq: main: lower-bounding surface}, the terminal condition in \eqref{eq: main: terminal condition for normal vec and offset term}, and Lemma \ref{lemma: main: lower bound for g}, $\underline{v}(T,x) = \underline{g}(x) \leq g(x)$ for all $x\in\R^n$. By Theorem \ref{theorem: prelim: value function and HJI equation} and Definition \ref{def: prelim: visc solution}, $v$ in \eqref{eq: prelim: value function} is a viscosity supersolution of \eqref{eq: prelim: HJI}. Then, by Lemma \ref{lemma: main: viscosity subsolution} and Theorem \ref{theorem: prelim: comparison result}, assertion (\ref{item: main: lower bound result 1}) holds.

    \underline{Assertion (\ref{item: main: lower bound result 2}).} Let $(\xi^\star,\omega^\star)$ be a pair satisfying \eqref{eq: main: PMP trimmed input dynamics} with the costate solution given by $\lambda_{i,k}$ and $\xi^\star(T) = \Bar{x}_{i,k} \in g^{-1}(\{\gamma_k\})$. Consider the map $s\mapsto \underline{v}_{i,k}\big(s,\xi^\star(s)\big)$ in which $\underline{v}_{i,k}\in\sC^1((0,T)\times\R^n;\,\R)$ is defined in \eqref{eq: main: lower-bounding surface}. Then, for a.e. $s\in(t,T)$,
    \begin{align}
        &\tfrac{d}{ds}\underline{v}_{i,k}\big(s,\xi^\star(s)\big) = \tfrac{\partial}{\partial s}\underline{v}_{i,k}\big(s,\xi^\star(s)\big) + \big\langle \nabla\underline{v}_{i,k}\big(s,\xi^\star(s)\big), \dot{\xi}^\star(s)\big\rangle \nn \\
        & \quad = \big\langle -A'(s)\lambda_{i,k}(s),\xi^\star(s) \big\rangle + \max_{u\in\U}\min_{d\in\D}\iprod{-\lambda_{i,k}(s)}{B(s)u + E(s)d}+\big\langle\lambda_{i,k}(s),A(s)\xi^\star(s) + \omega^\star(s) \big\rangle \nn \\
        &\quad = H(s,\xi^\star(s),\lambda_{i,k}(s)) - \cH(s,\xi^\star(s),\lambda_{i,k}(s)) = 0,
        \label{eq: main: lower-bounding the value proof 1}
    \end{align}
    in which $H$ and $\cH$ are given by \eqref{eq: prelim: hamiltonian} and \eqref{eq: main: equivalence of value functions proof 2}, respectively, and the last equality follows by Lemma \ref{lemma: main: equivalence of value functions}(\ref{item: equivalence of value functions 2}). Thus, $\frac{d}{ds}\underline{v}_{i,k}\big(s,\xi^\star(s)\big) = 0$ for a.e. $s\in(t,T)$. Consequently, by \eqref{eq: main: lower-bounding surface} and \eqref{eq: main: terminal condition for normal vec and offset term},
    \begin{align}
        \underline{v}_{i,k}\big(t,\xi^\star(t)\big) &= \integral{T}{t}{\tfrac{d}{ds}\underline{v}_{i,k}\big(s,\xi^\star(s)\big)}{s} + \underline{v}_{i,k}\big(T,\xi^\star(T)\big) = 0 + \iprod{\lambda_{i,k}(T)}{\xi^\star(T)} + q_{i,k}(T) =\gamma_k, \nn \\
        &\implies   \underline{v}\big(t,\xi^\star(t)\big) \geq \gamma_k.
        \label{eq: main: lower-bounding the value proof 2}
    \end{align}
    Finally, replacing $(\xi^\star_{i,k}, \omega_{i,k})$ with $(\xi^\star, \omega^\star)$ in the arguments of subclaim (b) of Corollary \ref{corollary: main: tight upper bound} yields $v\left(t,\xi^\star(t)\right) = \gamma_k$. Then, by assertion (\ref{item: main: lower bound result 1}) and \eqref{eq: main: lower-bounding the value proof 2}, assertion (\ref{item: main: lower bound result 2}) holds.
\end{proof}

Theorem \ref{theorem: main: lower-bounding the value}(\ref{item: main: lower bound result 2}) and Corollary \ref{corollary: main: tight upper bound} imply that solutions generated via the dynamics in \eqref{eq: main: PMP trimmed input dynamics} and \eqref{eq: main: dynamics for normal vec and offset term} from the set of terminal states $\{\Bar{x}_{i,k}\}^{n_k,N}_{i,k=1}$ may be used to characterise upper and lower bounds $\Bar{v}$ and $\underline{v}$ that both coincide with $v$ along solutions $\xi^\star_{i,k}$ of \eqref{eq: main: PMP trimmed input dynamics}. 
\begin{corollary}
Let Assumptions \ref{assumption: prelim: Mayer cost condition}-\ref{assumption: prelim: recoverability of player 1 set} hold and consider $v$, $\Bar{v}$, and $\underline{v}$ defined in \eqref{eq: prelim: value function}, \eqref{eq: main: upper-bounding surface}, and \eqref{eq: main: lower-bounding surface}, respectively. Then,
\begin{enumerate}[(a)]
    \item \label{item: tight upper and lower bound 1} For any terminal state $\Bar{x}\in\R^n$, \eqref{eq: main: PMP trimmed input dynamics} admits a solution $(\xi^\star, \lambda)$, potentially non-unique, on $s\in[t,T]$ for some $\omega^\star\in\sW[t,T]$; and
    \item \label{item : tight upper and lower bound 2} If every tuple $(\xi^\star_{i,k}, \lambda_{i,k}, \omega_{i,k})$ satisfies \eqref{eq: main: PMP trimmed input dynamics}, then,
    \begin{equation}
        v(t,\xi^\star_{i,k}(t)) = \Bar{v}(t,\xi^\star_{i,k}(t))  = \underline{v}(t,\xi^\star_{i,k}(t)) = \gamma_k,
        \label{eq: main: upper and lower bounds coincide}
    \end{equation}
    for all $k\in\{1,\ldots,N\}$ and $i\in\{1,\ldots,n_k\}$.
\end{enumerate}
    \label{corollary: main: upper and lower bounds coincide}
\end{corollary}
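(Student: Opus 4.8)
The plan is to treat the two assertions separately. Assertion (\ref{item: tight upper and lower bound 1}) is a pure existence statement for the coupled terminal value problem \eqref{eq: main: PMP trimmed input dynamics}, proved by an explicit construction; assertion (\ref{item : tight upper and lower bound 2}) then follows by observing that the single global hypothesis supplies exactly the hypotheses of Corollary \ref{corollary: main: tight upper bound} and Theorem \ref{theorem: main: lower-bounding the value}(\ref{item: main: lower bound result 2}), which together pin all three functions to $\gamma_k$ along $\xi^\star_{i,k}$. All four assumptions needed by those two results are available under the standing hypotheses of the corollary.

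For assertion (\ref{item: tight upper and lower bound 1}), fix any $\Bar{x}\in\R^n$ and any $p\in D^-g(\Bar{x})$; this set is non-empty since $g$ is convex (Remark \ref{remark: main: subdifferential of g}). The costate equation $\dot{\lambda}=-A'\lambda$ has continuous coefficients, so with $\lambda(T)=p$ it admits the unique continuous solution $\lambda(s)=\Phi_{-A'}(s,T)p$ on $[t,T]$, which in particular satisfies the terminal condition $\lambda(T)\in D^-g(\Bar{x})$. Next I would construct $\omega^\star$ as a measurable selection of $s\mapsto\argmax_{\omega\in\W(s)}\iprod{-\lambda(s)}{\omega}$; this is verbatim the argument used in the proof of Lemma \ref{lemma: main: equivalence of value functions}(\ref{item: equivalence of value functions 1}): by Assumption \ref{assumption: prelim: trimmed input condition} and Lemma \ref{lemma: continuity of trimmed control set}, $s\mapsto\W(s)$ is compact-valued and continuous on the compact interval $[t,T]$, hence $\mathrm{cl}\big(\bigcup_{s\in[t,T]}\W(s)\big)$ is compact and therefore separable, and since $(s,\omega)\mapsto\iprod{-\lambda(s)}{\omega}$ is continuous, \cite[Thm 18.19, p.605]{AB2006infinite} yields a measurable selector $\omega^\star$, which thus belongs to $\sW[t,T]$. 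Finally, $\dot{\xi}^\star=A\xi^\star+\omega^\star$ with $\xi^\star(T)=\Bar{x}$ is a linear ODE with essentially bounded measurable forcing, so it has a unique absolutely continuous solution on $[t,T]$ (as already noted for \eqref{eq: main: LTV system with trimmed input} in the proof of Lemma \ref{lemma: main: viscosity supersolution via scaled and shifted distance functions}). The tuple $(\xi^\star,\lambda)$ together with $\omega^\star$ then satisfies every line of \eqref{eq: main: PMP trimmed input dynamics}, and non-uniqueness is inherited from the possible multiplicity of $p\in D^-g(\Bar{x})$ and of the argmax selector.

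For assertion (\ref{item : tight upper and lower bound 2}), suppose every tuple $(\xi^\star_{i,k},\lambda_{i,k},\omega_{i,k})$ satisfies \eqref{eq: main: PMP trimmed input dynamics}. Reading the hypothesis one way, $\xi^\star_{i,k}$ (which characterises $\Bar{v}$) together with its corresponding control $\omega_{i,k}$ satisfies \eqref{eq: main: PMP trimmed input dynamics} with costate $\lambda_{i,k}$, so Corollary \ref{corollary: main: tight upper bound} gives $v(t,\xi^\star_{i,k}(t))=\Bar{v}(t,\xi^\star_{i,k}(t))=\gamma_k$. Reading it the other way, $\lambda_{i,k}$ (which characterises $\underline{v}$) satisfies \eqref{eq: main: PMP trimmed input dynamics} for the pair $(\xi^\star_{i,k},\omega_{i,k})$ with $\xi^\star_{i,k}(T)=\Bar{x}_{i,k}$ and $\omega_{i,k}\in\sW[t,T]$ — note $\lambda_{i,k}(T)=p_{i,k}\in D^-g(\Bar{x}_{i,k})$ by \eqref{eq: main: terminal condition for normal vec and offset term}, so the costate terminal condition in \eqref{eq: main: PMP trimmed input dynamics} holds automatically — so Theorem \ref{theorem: main: lower-bounding the value}(\ref{item: main: lower bound result 2}) gives $v(t,\xi^\star_{i,k}(t))=\underline{v}(t,\xi^\star_{i,k}(t))=\gamma_k$. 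Chaining these through the common value $v(t,\xi^\star_{i,k}(t))=\gamma_k$ yields \eqref{eq: main: upper and lower bounds coincide}, and since $k$ and $i$ are arbitrary, assertion (\ref{item : tight upper and lower bound 2}) follows.

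The routine steps — linear ODE solvability and the consistency checks of the terminal data — are short. The only part requiring genuine care is the measurable selection in (\ref{item: tight upper and lower bound 1}), but this is an exact reuse of an argument already carried out in the proof of Lemma \ref{lemma: main: equivalence of value functions}, so I expect no real obstacle there. In (\ref{item : tight upper and lower bound 2}) the one point to watch is the bookkeeping: confirming that the points $\Bar{x}_{i,k}$ used to build $\Bar{v}$ in Section \ref{sec: upper-bounding the value function} are the \emph{same} points carrying the subgradients $p_{i,k}$ used to build $\underline{v}$ in Section \ref{sec: lower-bounding the value function}, so that the hypotheses of Corollary \ref{corollary: main: tight upper bound} and Theorem \ref{theorem: main: lower-bounding the value}(\ref{item: main: lower bound result 2}) genuinely refer to one and the same object.
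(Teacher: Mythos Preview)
Your proposal is correct and follows essentially the same route as the paper: for (\ref{item: tight upper and lower bound 1}) the paper also picks $\lambda(T)\in D^-g(\Bar{x})$ via Remark \ref{remark: main: subdifferential of g}, writes $\lambda(s)=\Phi_{-A'}(s,T)\lambda(T)$ explicitly, invokes the same measurable-selection argument from below \eqref{eq: main: equivalence of value functions proof 7} to obtain $\omega^\star$, and then solves the state ODE; for (\ref{item : tight upper and lower bound 2}) the paper likewise just cites Corollary \ref{corollary: main: tight upper bound} and Theorem \ref{theorem: main: lower-bounding the value}(\ref{item: main: lower bound result 2}). Your additional bookkeeping remarks (on the shared $\Bar{x}_{i,k}$ and on $\lambda_{i,k}(T)=p_{i,k}$) are sound and do not depart from the paper's argument.
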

\begin{proof}
    \underline{Assertion (\ref{item: tight upper and lower bound 1}).} Fix $\Bar{x} \in \R^n$. Since $D^-g(\Bar{x})$ is non-empty (Remark \ref{remark: main: subdifferential of g}), a selection for the terminal condition $\lambda(T)\in D^-g(\Bar{x})$ exists, which yields an explicit solution $\lambda(s) = \Phi_{-A'}(s,T)\lambda(T)$, $s\in[t,T]$. Noting that $(s,\omega)\mapsto \langle-\lambda(s),\omega\rangle$ is continuous, by the same arguments below \eqref{eq: main: equivalence of value functions proof 7}, there exists a measurable selection $s\mapsto \omega^\star(s)$ on $[t,T]$ satisfying \eqref{eq: main: PMP trimmed input dynamics}. Then, the state dynamics admit the explicit solution $\xi^\star(s) = \Phi_A(s,T)\Bar{x} + \integral{T}{s}{\Phi_A(s,\tau)\omega^\star(\tau)}{\tau}$.

\underline{Assertion (\ref{item : tight upper and lower bound 2})} holds by Theorem \ref{theorem: main: lower-bounding the value}(\ref{item: main: lower bound result 2}) and Corollary \ref{corollary: main: tight upper bound}. 
\end{proof}

\section{Application: Reachability Analysis}
\label{sec: reachability}
The differential game studied in this work is particularly relevant in the application of reachability analysis. To elaborate, consider a target set $\cX_\gamma$, which can be described by the $\gamma$-sublevel set of $g$, i.e., 
\begin{equation}
    \cX_\gamma \doteq \left\{x\in\mathbb{R}^n \, |\, g(x) \leq \gamma \right\}.
    \label{eq: reachability: target set}
\end{equation}
Suppose a game is played between players I and II, in which player I wishes to reach $\cX_\gamma$ at the terminal time $T$ from some initial point $x$ at time $t$ and player II wishes to avoid $\cX_\gamma$. Player I wins in the case that $x$ is a point such that there exists a control $u\in\sU[t,T]$ that leads the system to $\cX_\gamma$ despite any strategy $\delta\in\Delta[t,T]$ that player II may employ. The set of all such points is given by
\begin{equation}
    \cR(t;\cX_\gamma)  \doteq \left\{x\in\mathbb{R}^n \, | \, \exists\, u\in\sU[t,T] \text{ s.t. } \forall \delta \in \Delta[t,T], \, \xi^{u,\delta[u]}_{t,x}(T) \in \cX_\gamma \right\},
    \label{eq: reachability: maximal BRS}
\end{equation}
which is referred to as the \emph{maximal} backwards reachable set to $\cX_\gamma$ (see \cite[Def 1]{chen2018hamilton}). Analogously, player II wins at points where there exists a strategy $\delta\in\Delta[t,T]$ such that the system avoids $\cX_\gamma$ at the terminal time despite any action $u\in\sU[t,T]$ that player I can take. The \emph{minimal} backwards reachable set (see \cite[Def 2]{chen2018hamilton}) to the complement of the target set, i.e., $\cX^c_\gamma$, characterises these points and is defined by
\begin{equation}
    \cA(t;\cX^c_\gamma)  \doteq \left\{x\in\mathbb{R}^n \, | \, \exists\, \delta\in\Delta[t,T] \text{ s.t. } \forall u \in \sU[t,T], \, \xi^{u,\delta[u]}_{t,x}(T) \in \cX^c_\gamma \right\}.
    \label{eq: reachability: minimal BRS}
\end{equation}
The sets $\cR(t;\cX_\gamma)$ and $ \cA(t;\cX^c_\gamma)$ may be characterised by the $\gamma$-sublevel and superlevel set of the value $v$ in \eqref{eq: prelim: value function}, respectively. The former result is a special case of \cite[Prop 2]{fisac2015reach} and the latter result follows from \cite[Lem 8]{mitchell2005time}, which are recalled in the following. 
\begin{theorem} Consider the value function $v\in\sC([0,T] \times\mathbb{R}^n;\,\mathbb{R})$ defined in \eqref{eq: prelim: value function}, in which $g\in\sC\left(\R^n;\,\R\right)$ characterises the target set $\cX_\gamma$ in \eqref{eq: reachability: target set}. Then, the maximal backwards reachable set to $\cX_\gamma$ (resp. minimal backwards reachable set to $\cX^c_\gamma$) for \eqref{eq: prelim: linear system} is
\begin{equation*}
\begin{split}
       \cR(t;\cX_\gamma) &= \left\{x\in\mathbb{R}^n \, | \, v(t, x) \leq \gamma \right\}, \\
       \cA(t;\cX^c_\gamma) &= \left\{x\in\mathbb{R}^n \, | \, v(t, x) > \gamma \right\},
\end{split}
\end{equation*}
for all $t\in[0,T]$ and $\gamma \in \R$.
    \label{theorem: reachability: maximal and minimal BRS via sublevel and superlevel sets}
\end{theorem}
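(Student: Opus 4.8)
The plan is to derive the two set identities by matching the differential game of Section~\ref{sec: preliminaries} to the reachability frameworks of \cite{fisac2015reach} and \cite{mitchell2005time}, and then to obtain the characterisation of $\cA(t;\cX^c_\gamma)$ from that of $\cR(t;\cX_\gamma)$ by a complementarity argument whose validity rests on the game having a value (Isaacs' condition; see Remark~\ref{remark: prelim: equivalence of lower and upper values}).

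First I would address the maximal backwards reachable set. Writing $\cX_\gamma = \{x\in\R^n\,|\,g(x)\le\gamma\}$ exhibits $\cX_\gamma$ as the $\gamma$-sublevel set of the continuous function $g$, which is exactly the data required by \cite[Prop~2]{fisac2015reach}. It then remains to verify that the remaining hypotheses of that proposition hold here: the LTV, control-affine dynamics \eqref{eq: prelim: linear system} are a special case of the controlled dynamics treated in \cite{fisac2015reach}; the input sets $\U,\D$ are non-empty, compact and convex; the information pattern --- player~I using open-loop controls $u\in\sU[t,T]$ and player~II responding with non-anticipative strategies $\delta\in\Delta[t,T]$ (Assumption~\ref{assumption: prelim: strategies}) --- matches the one under which the cited result is stated; and the value function built there from the terminal payoff $g$ agrees with $v$ in \eqref{eq: prelim: value function}, both being the unique viscosity solution of \eqref{eq: prelim: HJI with terminal condition} by Theorem~\ref{theorem: prelim: value function and HJI equation}. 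Invoking \cite[Prop~2]{fisac2015reach} then gives $\cR(t;\cX_\gamma) = \{x\in\R^n\,|\,v(t,x)\le\gamma\}$ for all $t\in[0,T]$ and $\gamma\in\R$.

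Next I would treat the minimal backwards reachable set. The cleanest route is to establish $\cA(t;\cX^c_\gamma) = \R^n\setminus\cR(t;\cX_\gamma)$ and combine it with the first part, which immediately yields $\cA(t;\cX^c_\gamma) = \R^n\setminus\{x\,|\,v(t,x)\le\gamma\} = \{x\,|\,v(t,x)>\gamma\}$. Unwinding the definitions in \eqref{eq: reachability: maximal BRS}--\eqref{eq: reachability: minimal BRS}, the inclusion $\cA(t;\cX^c_\gamma)\subseteq\R^n\setminus\cR(t;\cX_\gamma)$ is immediate: a single evading strategy $\delta$ that defeats every $u$ in particular prevents player~I from guaranteeing a hit. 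For the reverse inclusion one uses that the game admits a value (Remark~\ref{remark: prelim: equivalence of lower and upper values} and \cite[Cor~2.2, p.443]{bardi1997optimal}), so that when player~I cannot force entry into $\cX_\gamma$ the supremum in \eqref{eq: prelim: value function} exceeds $\gamma$ and is attained by some non-anticipative $\delta$ that then works uniformly in $u$; continuity of $g$ and compactness of $\U,\D$ (hence of the attainable set of terminal states) guarantee the relevant extrema are attained, so that the non-strict inequality $v(t,x)\le\gamma$ and the strict avoidance $g(\cdot)>\gamma$ fall on the correct sides. Equivalently, this identity is \cite[Lem~8]{mitchell2005time}, and one may simply cite it.

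I expect the main obstacle to be precisely this reverse inclusion, i.e.\ upgrading ``for each open-loop control $u$ there is an evading strategy $\delta$'' to ``there is one evading strategy $\delta$ that defeats all $u$''. This is where the existence of a value for the game --- and hence Isaacs' condition --- is essential, and where the argument is more than bookkeeping; the remaining points (matching the hypotheses of \cite{fisac2015reach}, identifying the two value functions via Theorem~\ref{theorem: prelim: value function and HJI equation}, and confirming that boundary cases $v(t,x)=\gamma$ are assigned to $\cR(t;\cX_\gamma)$ rather than $\cA(t;\cX^c_\gamma)$ thanks to attainment of the extrema) are routine under the standing assumptions.
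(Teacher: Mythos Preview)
Your proposal is essentially the paper's approach: the paper gives no proof of its own but simply recalls the result from \cite[Prop~2]{fisac2015reach} and \cite[Lem~8]{mitchell2005time} (with the subsequent remark handling the shift from $\gamma=0$ to general $\gamma$), and you cite the same two references after checking hypotheses. One caution about your supplementary complementarity argument: the step ``the supremum is attained by some $\delta$'' is neither needed nor obviously true---what you actually use for $\{v>\gamma\}\subseteq\cA$ is that $\sup_\delta\inf_u g>\gamma$ already furnishes some $\delta$ with $\inf_u g>\gamma$, no attainment required; conversely, for $\cA\subseteq\{v>\gamma\}$ you need the infimum over $u$ (for the fixed evading $\delta^\star$) to be attained, and ``compactness of $\U,\D$'' does not by itself give compactness of $\{\xi^{u,\delta^\star[u]}_{t,x}(T):u\in\sU[t,T]\}$, so this is the genuinely delicate point you correctly flag and defer to \cite[Lem~8]{mitchell2005time}.
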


\begin{remark} The case where $\gamma = 0$ in \eqref{eq: reachability: target set} is considered in \cite{fisac2015reach} and \cite{mitchell2005time}. The generalisation in Theorem \ref{theorem: reachability: maximal and minimal BRS via sublevel and superlevel sets} follows by noting that $\cX_\gamma$ is equivalently the zero sublevel set of $x\mapsto \Tilde{g}(x) \doteq g(x)-\gamma$, which can be used to replace $g$ in \eqref{eq: prelim: value function}, resulting in a translation of $v$.
\end{remark}

Consequently, the proposed upper bound and lower bounds in Sections \ref{sec: upper-bounding the value function} and \ref{sec: lower-bounding the value function} yield inner-approximations of $\cR(t;\cX_\gamma)$ and $ \cA(t;\cX^c_\gamma)$, respectively. These inner-approximations yield conservative estimates for the set of states for which player I and player II can expect to reach $\cX_\gamma$ or, respectively, avoid $\cX_\gamma$ despite the presence of an adversary.
\begin{corollary} 
Let Assumptions \ref{assumption: prelim: Mayer cost condition}-\ref{assumption: prelim: trimmed input condition} hold and consider $\Bar{v}$ and $\underline{v}$ defined in \eqref{eq: main: upper-bounding surface} and \eqref{eq: main: lower-bounding surface}, respectively. Consider also the target set $\cX_\gamma$ in \eqref{eq: reachability: target set}, which characterises $\cR(t;\cX_\gamma)$ and $ \cA(t;\cX^c_\gamma)$ in \eqref{eq: reachability: maximal BRS} and \eqref{eq: reachability: minimal BRS}, respectively. Then,
\begin{align}
    \Tilde{\cR}(t;\cX_\gamma) &\doteq \{x\in\R^n\,|\,\Bar{v}(t,x) \leq \gamma\} \subseteq \cR(t;\cX_\gamma), \label{eq: reachability: upper-bound yields inner-approx for maximal BRS}\\
    \Tilde{\cA}(t;\cX^c_\gamma)&\doteq \{x\in\R^n\,|\,\underline{v}(t,x) > \gamma \} \subseteq \cA(t;\cX^c_\gamma), \label{eq: reachability: lower-bound yields inner-approx for minimal BRS}
\end{align}
for all $t\in[0,T]$ and $\gamma \in \R$.
\label{corollary: reachability: inner approximations of maximal and minimal BRS}
\end{corollary}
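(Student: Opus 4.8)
The plan is to obtain both inclusions as immediate consequences of the sandwich $\underline{v}(t,x) \le v(t,x) \le \Bar{v}(t,x)$ together with the level-set characterisation of the backwards reachable sets in Theorem \ref{theorem: reachability: maximal and minimal BRS via sublevel and superlevel sets}; there is essentially no new analysis, only a check that the hypotheses line up.

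First I would record the three ingredients and confirm each is available under Assumptions \ref{assumption: prelim: Mayer cost condition}--\ref{assumption: prelim: trimmed input condition}. (i) Assumption \ref{assumption: prelim: Mayer cost condition} gives $g \in \sC(\R^n;\,\R)$, so Theorem \ref{theorem: reachability: maximal and minimal BRS via sublevel and superlevel sets} applies and yields $\cR(t;\cX_\gamma) = \{x \in \R^n \,|\, v(t,x) \le \gamma\}$ and $\cA(t;\cX^c_\gamma) = \{x \in \R^n \,|\, v(t,x) > \gamma\}$ for every $t \in [0,T]$ and $\gamma \in \R$. (ii) Assumptions \ref{assumption: prelim: Mayer cost condition}--\ref{assumption: prelim: trimmed input condition} are exactly the hypotheses of Theorem \ref{theorem: main: upper-bounding the value function}, so $\Bar{v}(t,x) \ge v(t,x)$ on $[0,T] \times \R^n$. (iii) Assumption \ref{assumption: prelim: Mayer cost condition} alone is the hypothesis of Theorem \ref{theorem: main: lower-bounding the value}(\ref{item: main: lower bound result 1}), so $\underline{v}(t,x) \le v(t,x)$ on $[0,T] \times \R^n$.

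Then for \eqref{eq: reachability: upper-bound yields inner-approx for maximal BRS} I would fix $t \in [0,T]$, $\gamma \in \R$, and $x \in \Tilde{\cR}(t;\cX_\gamma)$, so that $v(t,x) \le \Bar{v}(t,x) \le \gamma$ by (ii); hence $x$ lies in the $\gamma$-sublevel set of $v(t,\cdot)$, which equals $\cR(t;\cX_\gamma)$ by (i). For \eqref{eq: reachability: lower-bound yields inner-approx for minimal BRS} I would argue symmetrically: if $x \in \Tilde{\cA}(t;\cX^c_\gamma)$ then $v(t,x) \ge \underline{v}(t,x) > \gamma$ by (iii), so $x$ belongs to the $\gamma$-superlevel set of $v(t,\cdot)$, which equals $\cA(t;\cX^c_\gamma)$ by (i). Since $x$ was an arbitrary element of the left-hand set in each case, the two claimed inclusions follow.

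The only thing to be careful about is the bookkeeping just described — matching each ingredient result to the correct (sub)set of Assumptions \ref{assumption: prelim: Mayer cost condition}--\ref{assumption: prelim: trimmed input condition} (in particular noting that the lower-bound half needs only Assumption \ref{assumption: prelim: Mayer cost condition}), and preserving the \emph{strict} inequality when passing from $\underline{v}$ to $v$ in the minimal-reachable-set inclusion. Beyond that the corollary carries no analytic content; it is a direct consequence of the comparison-based bounds and the reachability characterisation, so I do not anticipate a substantive obstacle.
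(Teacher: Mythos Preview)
Your proposal is correct and follows essentially the same approach as the paper: both arguments combine the sandwich $\underline{v}\le v\le \Bar{v}$ from Theorem~\ref{theorem: main: upper-bounding the value function} and Theorem~\ref{theorem: main: lower-bounding the value}(\ref{item: main: lower bound result 1}) with the level-set characterisation of Theorem~\ref{theorem: reachability: maximal and minimal BRS via sublevel and superlevel sets}, then take an arbitrary point in the left-hand set and chase the inequalities. The paper additionally remarks that the empty-set case is trivial, but your ``$x$ arbitrary'' formulation already covers that vacuously.
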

\begin{proof} Fix any $t\in[0,T]$ and $\gamma \in \R$. Note that in the case where $\Tilde{\cR}(t;\cX_\gamma)$ is empty \eqref{eq: reachability: upper-bound yields inner-approx for maximal BRS} holds and the same applies for $\Tilde{\cA}(t;\cX^c_\gamma)$. The remaining proof assumes that these sets are non-empty. Let $x\in\Tilde{\cR}(t;\cX_\gamma)$. By Theorem \ref{theorem: main: upper-bounding the value function}, $\gamma \geq \Bar{v}(t,x) \geq v(t,x)$ with $v$ given by \eqref{eq: prelim: value function}. By Theorem \ref{theorem: reachability: maximal and minimal BRS via sublevel and superlevel sets}, $x\in\cR(t;\cX_\gamma)$. As $x\in\Tilde{\cR}(t;\cX_\gamma)$ is arbitrary, $\Tilde{\cR}(t;\cX_\gamma)\subseteq \cR(t;\cX_\gamma)$, which is \eqref{eq: reachability: upper-bound yields inner-approx for maximal BRS}. Next, suppose $x\in\Tilde{\cA}(t;\cX^c_\gamma)$. By Theorem \ref{theorem: main: lower-bounding the value}(\ref{item: main: lower bound result 1}), $\gamma < \underline{v}(t,x) \leq v(t,x)$, which, by Theorem \ref{theorem: reachability: maximal and minimal BRS via sublevel and superlevel sets}, yields $x\in\cA(t;\cX^c_\gamma)$. As $x\in\Tilde{\cA}(t;\cX^c_\gamma)$ is arbitrary, $\Tilde{\cA}(t;\cX^c_\gamma)\subseteq \cA(t;\cX^c_\gamma)$.
\end{proof}

\section{An Algorithmic Procedure}
\label{sec: algorithm}
The upper bound $\Bar{v}$ in \eqref{eq: main: upper-bounding surface} is implicitly characterised via the Mayer cost $g$, the state-transition matrix $\Phi_A(T,t)$, and a collection of solutions $\{\xi^\star_{i,k}\}^{n_k,N}_{i,k=1}$ of \eqref{eq: main: PMP trimmed input dynamics}. By Corollary \ref{corollary: main: tight upper bound}, the upper bound coincides with the value $v$ in \eqref{eq: prelim: value function} along these solutions if $\xi^\star_{i,k}$ are generated via \eqref{eq: main: PMP trimmed input dynamics}, which requires propagation of a collection of costates $\{\lambda_{i,k}\}^{n_k,N}_{i,k=1}$. Together with a collection of scalar solutions $\{q_{i,k}\}^{n_k,N}_{i,k=1}$ of \eqref{eq: main: dynamics for normal vec and offset term}-\eqref{eq: main: terminal condition for normal vec and offset term}, the pairs $\{(\lambda_{i,k}, q_{i,k})\}^{n_k,N}_{i,k=1}$ define a collection of hyperplanes that characterise the proposed lower bound $\underline{v}$ in \eqref{eq: main: lower-bounding surface}. By Corollary \ref{corollary: main: upper and lower bounds coincide}, generating the upper and lower bound using the same costate solutions $\lambda_{i,k}$ results in all three functions $\Bar{v}$, $v$, and $\underline{v}$ coinciding along the solutions $\xi^\star_{i,k}$. 

\begin{algorithm}[!ht]
 \caption{Pre-computation for the Upper and Lower Bounds}
 \label{alg: value function upper and lower bound}
\begin{algorithmic}[1]
  \STATE \textbf{Input:} $\{t, T\}$, $\{\gamma_k\}^N_{k=1}$, $\{n_k\}^N_{k=1}$
  \STATE \textbf{Output:} $\{(\xi^{\star}_{i,k}(t),\lambda_{i,k}(t), q_{i,k}(t))\}^{n_k, N}_{i, k=1}$, $\Phi_A(T,t)$
  \FORALL{$k \in \{1,\ldots, N\}$ and $i \in \{1,\ldots,n_k\}$}
        \STATE \label{alg step: selection of points and subgradients} Select $\Bar{x}_{i,k} \in g^{-1}(\{\gamma_k\})$ and $p_{i,k} \in D^{-}g(\Bar{x}_{i,k})$
  \ENDFOR
  \FORALL{$k\in\{1,\ldots,N\}$ and $i \in \{1,\ldots,n_k\}$}
  \STATE $(\xi^{\star}_{i,k}(t), \lambda_{i,k}(t), q_{i,k}(t)) \leftarrow$ solution of \eqref{eq: main: PMP trimmed input dynamics} and \eqref{eq: main: dynamics for normal vec and offset term} with 
  \begin{equation}
      (\xi^\star(T), \lambda(T), q(T)) = (\Bar{x}_{i,k}, p_{i,k}, -\langle p_{i,k},\Bar{x}_{i,k}\rangle + \gamma_k).
      \label{eq: algorithm: terminal conditions}
  \end{equation}
  \ENDFOR
  \STATE $\Phi_A(T,t) \leftarrow $ solution of 
    \begin{equation}
        \textstyle{\frac{\partial}{\partial s}\Phi_A(T,s)} = -\Phi_A(T,s)A(s),\quad \Phi_A(T,T) = \I_n.\label{eq: algorithm: state-transition matrix}
    \end{equation}
\end{algorithmic}
\end{algorithm}

Based on the results of Section \ref{sec: main results}, a possible numerical scheme to compute the solution tuples $\{\xi^\star_{i,k}(t),\lambda_{i,k}(t),q_{i,k}(t)\}^{n_k,N}_{i,k=1}$ is summarised in Algorithm \ref{alg: value function upper and lower bound}. The algorithm assumes that the system matrices $A(s)$, $B(s)$, and $E(s)$ as well as the trimmed control set $\W(s)$ can be accessed at any time $s\in [t,T]$. An integration scheme such as a Runge-Kutta scheme may be used to integrate the dynamics in \eqref{eq: main: PMP trimmed input dynamics}, \eqref{eq: main: dynamics for normal vec and offset term}, and \eqref{eq: algorithm: state-transition matrix}. The algorithm pre-computes data that may then be used to access the value of the upper and lower bounds online.

\begin{algorithm}[!ht]
 \caption{Accessing the Upper and Lower Bounds}
 \label{alg: accessing value function upper and lower bound}
\begin{algorithmic}[1]
  \STATE \textbf{Input:} $x$, $\{\gamma_k\}^N_{k=1}$, $\{(\xi^{\star}_{i,k}(t),\lambda_{i,k}(t), q_{i,k}(t))\}^{n_k, N}_{i, k=1}$, $\Phi_A(T,t)$
  \STATE \textbf{Output:} $\Bar{v}(t,x)$, $\underline{v}(t,x)$
  \FORALL{$k \in \{1,\ldots, N\}$}
  \STATE \label{step: alg: half space rep} $\left(C_k(t),d_k(t)\right)\leftarrow \texttt{halfSpace}\big(\{\xi^{\star}_{i,k}(t)\}^{n_k}_{i=1}\big)$
  \STATE Solve
  \begin{align}
    \xi^\star_k(t,x) \leftarrow&\argmin_{\xi\in\R^n} \textstyle{\frac{1}{2}}\langle\xi,\, \Phi'_A(T,t)\Phi_A(T,t)\xi\rangle - \langle \Phi'_A(T,t)\Phi_A(T,t)x,\,\xi\rangle,  \label{eq: algorithm: QP}\\
        &\text{subject to }  C_k(t)\xi \leq d_k(t),\nn
\end{align}
\STATE $\Bar{v}_k(t,x) \leftarrow L_g \norm{\Phi_A(T,t)(x-\xi^\star_k(t,x))}+\gamma_k$
  \ENDFOR
    \STATE \label{step: alg: access upper bound} $\Bar{v}(t,x) \leftarrow \min_{k\in\{1,\ldots,N\}}\Bar{v}_k(t,x)$
    \STATE \label{step: alg: access lower bound} $\underline{v}(t,x)\leftarrow \max_{k\in\{1,\ldots,N\}}\max_{i\in\{1,\ldots,n_k\}}\{\iprod{\lambda_{i,k}(t)}{x} + q_{i,k}(t)\}$
\end{algorithmic}
\end{algorithm}

For any point $x\in\R^n$, the upper and lower bounds may be accessed at time $t$ using Algorithm \ref{alg: accessing value function upper and lower bound}. Note that the polytope $\Omega^\star_k(t)$ in \eqref{eq: main: upper-bounding surface kth level set} admits a half-space representation $\{\xi \in\R^{n}\,|\, C_k(t)\xi \leq d_k(t)\}$ in which $C_k(t)\in\R^{n_c \times n}$ and $d_k(t) \in \R^{n_c}$ define a collection of $n_c$ linear inequalities \cite[Chap 4.4.4, pp.80-81]{borrelli2017predictive}. Correspondingly, \texttt{halfSpace} in step \ref{step: alg: half space rep} of Algorithm \ref{alg: accessing value function upper and lower bound} refers to any routine that can convert $\Omega^\star_k(t)$ into its half-space representation. Furthermore, given $x\in\R^n$, a minimising point $\xi \in \Omega^\star(t)$ of $\norm{\Phi_A(T,t)(x-\xi)}^2$ is also a minimising point of $\norm{\Phi_A(T,t)(x-\xi)}$. Thus, by \eqref{eq: main: upper-bounding surface kth level set} and \eqref{eq: main: time-dependent dist function def}, $\Bar{v}_k$ can be evaluated by solving the quadratic program in \eqref{eq: algorithm: QP} and subsequently substituting $\xi^\star_k(t,x)$ for $\xi$ in \eqref{eq: main: time-dependent dist function def}. The upper bound $\Bar{v}$ is then obtained via minimisation over all $\Bar{v}_k$ (step \ref{step: alg: access upper bound}) and the lower bound $\underline{v}$ may be accessed by directly evaluating \eqref{eq: main: lower-bounding surface} (step \ref{step: alg: access lower bound}).

\section{Numerical Example}
\label{sec: numerical example}
As an illustrative example for this work, consider the LTV dynamics described by
\begin{equation}
    \Dot{\xi}(s) = \begin{bmatrix}0&1&0\\-k_\xi^2(s)&0&1\\0&0&0\end{bmatrix}\xi(s) + \begin{bmatrix}0&0 \\ 1&0\\0&1\end{bmatrix}u(s) + \begin{bmatrix}
        0\\k_d(s)\\0
    \end{bmatrix}d(s), \quad \text{a.e. } s\in(0,1.5), 
   \label{eq: numerical ex: LTV dynamics}
\end{equation}
in which $k_\xi(s) = \sqrt{4 + 2 \cos{(2s)}}$, $k_d(s)=0.5+0.5\sin{(\frac{\pi}{2}s)}$, $\xi(s)=[\xi_1(s), \xi_2(s), \xi_3(s)]'\in\mathbb{R}^3$, $u(s)\in\U\subset\R^2$, and $d(s)\in\D\subset\R$, all at time $s$. The control set for each player is given by 
\begin{equation}
\U = [-1,\,1]\times [-1,\,1], \quad \D = [-1,\,1].
    \label{eq: numerical ex: input sets}
\end{equation}
It can be verified that $\W(s)$ as defined in \eqref{eq: prelim: trimmed input set def} is given by $\W(s) = \{0\}\times [
    -1+k_d(s),\, 1-k_d(s)]\times [-1,\,1]$ for all $s\in[0,1.5]$ for \eqref{eq: numerical ex: LTV dynamics}-\eqref{eq: numerical ex: input sets}. In particular, Assumptions \ref{assumption: prelim: trimmed input condition}-\ref{assumption: prelim: recoverability of player 1 set} are satisfied. Additionally, attach to \eqref{eq: numerical ex: LTV dynamics} the Mayer cost
\begin{equation}
     g(x) = \lVert x\rVert, \quad \forall x\in\R^n,\label{eq: numerical ex: terminal cost}
\end{equation}
which yields a Lipschitz constant for $g$ of $L_g = 1$.

To implement Algorithm \ref{alg: value function upper and lower bound}, consider a selection of $N = 5$ level sets corresponding to the levels $\{\gamma_k\}^N_{k=1} = \{0, 0.3,0.6,0.9,1.2\}$ and $\{n_k\}^N_{k=1} = \{85, 84, 84, 84, 84\}$ solutions for each level set. The points $\Bar{x}_{i,k}$ and their associated subgradients $p_{i,k}$ may be selected in step \ref{alg step: selection of points and subgradients} of Algorithm \ref{alg: value function upper and lower bound} using the fact that
\begin{equation}
    g^{-1}(\{\gamma\}) = \{\gamma e\in\R^n\,|\,\norm{e} = 1\}, \quad D^-g(x) = \begin{cases}
        \{e\in\R^n\,|\,\norm{e}\leq1\}, \quad &x = 0,\\ \left\{\frac{x}{\norm{x}}\right\},\quad &\text{otherwise},
    \end{cases} 
\end{equation}
for $g$ in \eqref{eq: numerical ex: terminal cost} and $\gamma \geq 0$. Using a Runge-Kutta scheme with a step size of 8.3ms to integrate solutions of \eqref{eq: main: PMP trimmed input dynamics}, \eqref{eq: main: dynamics for normal vec and offset term}, \eqref{eq: algorithm: state-transition matrix}, the resulting upper and lower bounds $\Bar{v}$ and $\underline{v}$ at $t = 0$ are displayed in Fig. \ref{fig: numerical ex: comparison of bounds and exact value function}. The intersection of the bounds with the hyperplanes $x_2 = x_3 = 0$, $x_1 = x_3 = 0$, and $x_1 = x_2 = 0$, is displayed in Figs. \ref{subfig: numerical ex: x1 slice}, \ref{subfig: numerical ex: x2 slice}, and \ref{subfig: numerical ex: x3 slice}, respectively. 

\begin{figure}[!ht]
    \centering
    \begin{minipage}{0.49\textwidth}
        \centering
        \includegraphics[width=\textwidth]{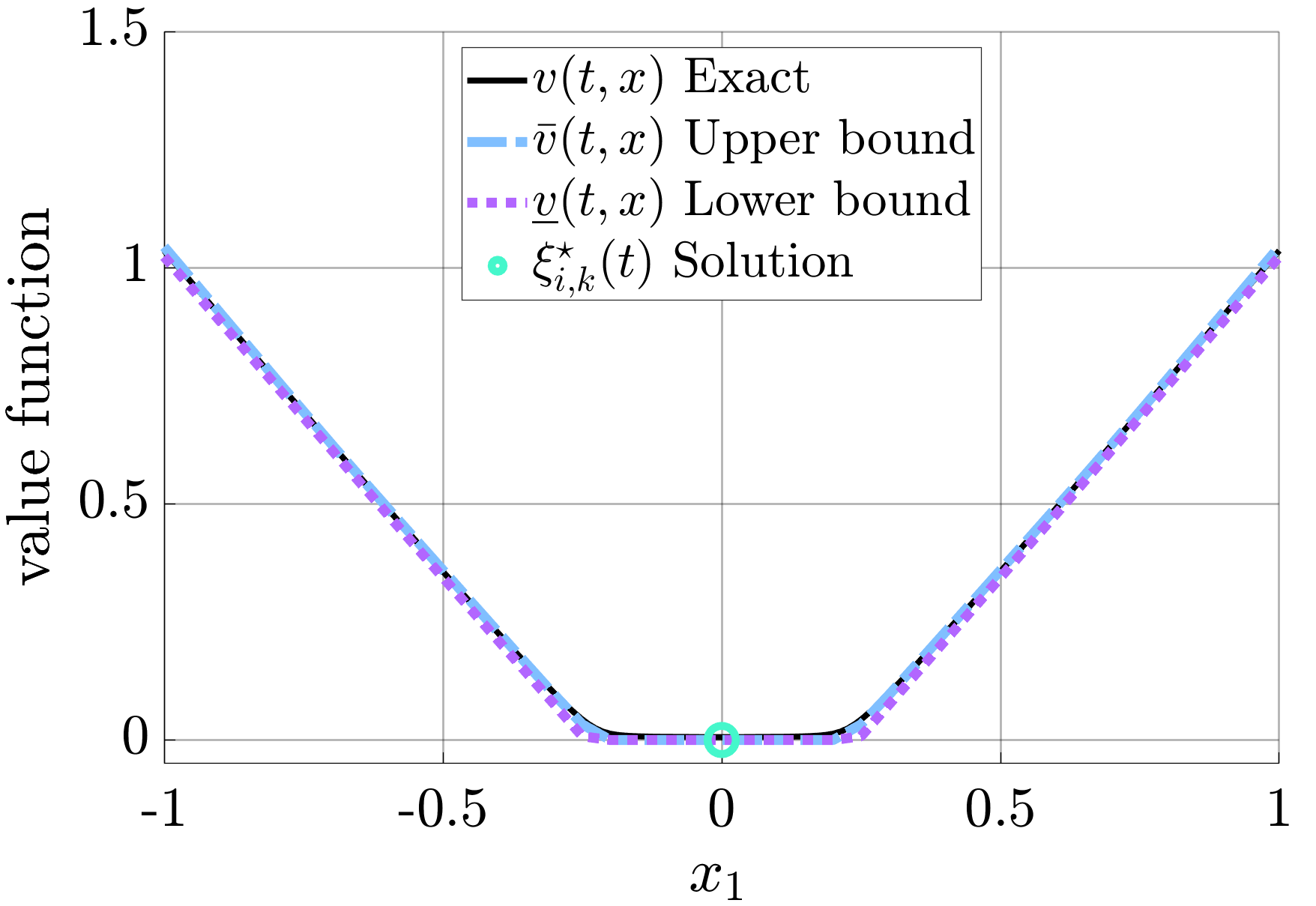} 
        \subcaption{Slice along $x_2 = x_3 = 0$}
        \label{subfig: numerical ex: x1 slice}
    \end{minipage}
    \hfill
    \begin{minipage}{0.49\textwidth}
        \centering
        \includegraphics[width=\textwidth]{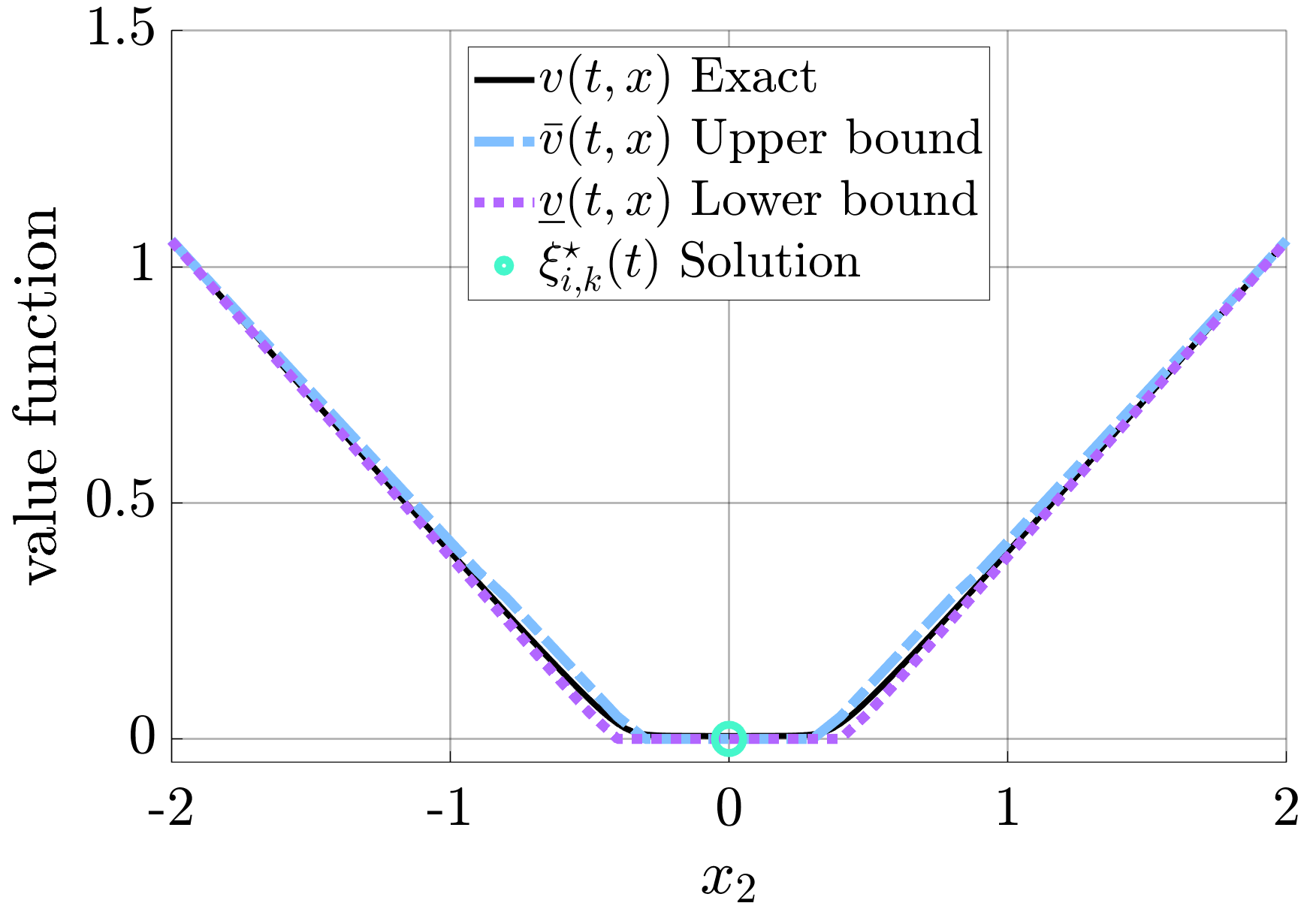} 
        \subcaption{Slice along $x_1 = x_3 = 0$}
                \label{subfig: numerical ex: x2 slice}
    \end{minipage}
    \begin{minipage}{0.62\textwidth}
        \centering
        \includegraphics[width=\textwidth]{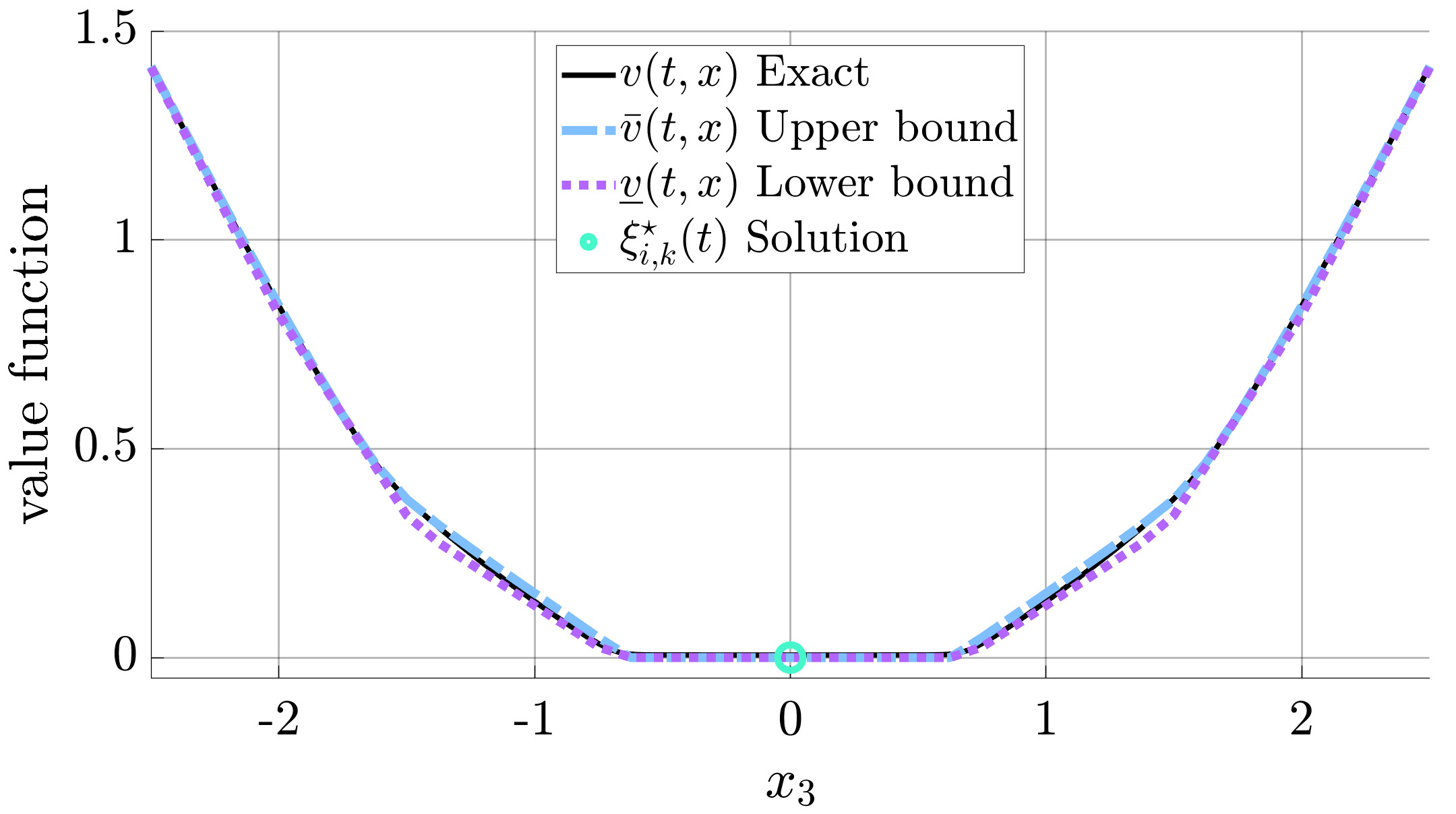} 
        \subcaption{Slice along $x_1 = x_2 = 0$}
                \label{subfig: numerical ex: x3 slice}
    \end{minipage}
    \caption{Comparison of the value function $v(t,x)$ and its upper and lower bounds $\Bar{v}(t,x)$ and $\underline{v}(t,x)$ for \eqref{eq: numerical ex: LTV dynamics}-\eqref{eq: numerical ex: input sets} with the cost in \eqref{eq: numerical ex: terminal cost} at time $t=0$.}
    \label{fig: numerical ex: comparison of bounds and exact value function}
\end{figure}

The visualisation of the bounds has been generated by evaluating $\Bar{v}$ and $\underline{v}$ along a collection of grid points using Algorithm \ref{alg: accessing value function upper and lower bound}. The routines \texttt{vert2lcon} and \texttt{quadprog} in \texttt{MATLAB} have been used to convert $\Omega^\star_k(t)$ to its half-space representation in step \ref{step: alg: half space rep} of Algorithm \ref{alg: accessing value function upper and lower bound} and to solve the quadratic program in \eqref{eq: algorithm: QP}. Note that in a practical application, $\Bar{v}$ and $\underline{v}$ would only need to be evaluated online at a particular point of interest, e.g., the current state of the system. For comparison, the `exact' value function $v$ has been numerically computed on a grid of $151\times151\times151$ points using the finite-difference scheme provided by \cite{mitchelltoolbox}. This value function is included in Fig. \ref{fig: numerical ex: comparison of bounds and exact value function} and the upper and lower bounds, $\Bar{v}$ and $\underline{v}$, have been evaluated along the same grid points.

By Theorems \ref{theorem: main: upper-bounding the value function} and \ref{theorem: main: lower-bounding the value}, the value function $v$ is expected to lie between $\Bar{v}$ and $\underline{v}$, which is observed in Fig. \ref{fig: numerical ex: comparison of bounds and exact value function}. Algorithm \ref{alg: value function upper and lower bound} has been implemented in Fig. \ref{fig: numerical ex: comparison of bounds and exact value function} such that the set $\{\xi^\star_{i,k}(t)\}^{n_k,N}_{i,k=1}$ contains the solution $\xi^\star_{i,k}(t)= 0 \in g^{-1}(\{0\})$ for all $t\in[0,T]$. Thus, by Corollary \ref{corollary: main: upper and lower bounds coincide}, the bounds and the exact value function are expected to coincide at the origin, which is also observed in Fig. \ref{fig: numerical ex: comparison of bounds and exact value function}. These functions must also coincide along all other solutions in the set $\{\xi^\star_{i,k}(\cdot)\}^{n_k,N}_{i,k=1}$, however, these can not be displayed on the same slice simultaneously. 

\pagebreak
\subsection{Performance Evaluation}
Using a fourteen-core Intel{\textregistered} Core\textsuperscript{\texttrademark} i7-13700H CPU, computation times for Algorithm \ref{alg: value function upper and lower bound} and the grid-based approach as well as their memory requirements are displayed in Table \ref{tab: numerical ex: computation times and storage reqs}. The online access time of the upper bound $\Bar{v}$ can be reduced by converting each polytope $\Omega^\star_k(t)$ in \eqref{eq: main: upper-bounding surface kth level set} into its half-space representation offline before solving \eqref{eq: algorithm: QP}. The computation time and memory requirement with this conversion is also included in Table \ref{tab: numerical ex: computation times and storage reqs}. The upper and lower bounds $\Bar{v}$ and $\underline{v}$ may be used as estimates for the value function $v$, with their difference providing a rigorous error bound (to the tolerance of the ODE scheme used and the numerical solver for \eqref{eq: algorithm: QP}). Relative to the maximum value of $v$, the error $\Bar{v} - \underline{v}$ does not exceed $5\%$ over the grid in Fig. \ref{fig: numerical ex: comparison of bounds and exact value function}. These errors can be further reduced by increasing the number of levels $N$ and/or the number of solutions $n_k$ along each level set. 

\renewcommand{\arraystretch}{1.2}
\begin{table}[!ht]
\caption{Comparison of computation times and memory requirements for $v$, $\Bar{v}$, and $\underline{v}$.}
\vspace{-0.2cm}
\begin{center}
\begin{tabular}{|c|p{4cm}|p{4.7cm}|} \hline
\cellcolor{lightgray} \textbf{Numerical method} & \cellcolor{lightgray} \textbf{Computation time} [s] (rel. to \cite{mitchelltoolbox}) & \cellcolor{lightgray} \textbf{Memory requirement} [B] (rel. to \cite{mitchelltoolbox}) \\ \hline \hline
Grid-based approach \cite{mitchelltoolbox} & 1707 (100\%) & 
$7.547\times10^9$ (100\%) \\ \hline
Algorithm \ref{alg: value function upper and lower bound} & $1.203$ (0.07047\%) & $6.948\times10^6$ (0.09207\%) \\ \hline
Algorithm \ref{alg: value function upper and lower bound} (half-space rep.) & $2.219$ (0.1300\%) & $1.312\times10^7$ (0.1738\%)\\ \hline
\end{tabular}
\vspace{-0.4cm}
\end{center}
\label{tab: numerical ex: computation times and storage reqs}
\end{table}
\renewcommand{\arraystretch}{1}

Algorithm \ref{alg: value function upper and lower bound} offers marked improvements in both offline computation and memory requirement compared to grid-based approaches whilst having modest error. This efficiency arises due to the sparse parametrisation of the proposed upper and lower bounds. In particular, $\Bar{v}$ in Fig. \ref{fig: numerical ex: comparison of bounds and exact value function} is characterised by a set of $421$ solutions $\{\xi^\star_{i,k}(\cdot)\}^{n_k,N}_{i,k=1}$ of \eqref{eq: main: LTV system with trimmed input} and a state-transition matrix, and $\underline{v}$ is characterised by a set of $421$ solution pairs $\{\lambda_{i,k}(\cdot), q_{i,k}(\cdot)\}^{n_k,N}_{i,k=1}$ of \eqref{eq: main: dynamics for normal vec and offset term}, which are in contrast to the $151\times151\times151$ grid points characterising $v$. 

The bulk of the computation time and memory requirement for the bounds $\Bar{v}$ and $\underline{v}$ scales in the number of solutions $\sum^N_{k=1}n_k$, which is not explicitly tied to the number of system states. Thus, further savings are expected for higher dimensional systems. It is worth noting that the computational expense required to access $\Bar{v}$ can become significant with increases to the number of solutions as the number of linear inequalities required to represent $\Omega^\star_k(t)$ in \eqref{eq: main: upper-bounding surface kth level set} will also increase \cite{klee1964number}. In Fig. \ref{fig: numerical ex: comparison of bounds and exact value function}, the maximum and average time to access $\Bar{v}$ at each point was $31.25$ms and $10.16$ms, respectively. In a high-dimensional application, techniques for reducing the number of vertices in $\Omega^\star_k(t)$ such as \cite{reisner2001dropping} may be used to alleviate the online computational expense. Although reducing the parametrisation of the bounds may lead to added conservatism, the upper and lower bound guarantees in Theorems \ref{theorem: main: upper-bounding the value function} and \ref{theorem: main: lower-bounding the value} will hold regardless. Thus, the proposed scheme in this work allows for a flexible allocation of computational resources whilst retaining rigorous guarantees in an applied setting.

\section{Conclusion}
\label{sec: conclusion}
In this paper, upper and lower bounds for the value function for a class of constrained linear-time varying games were characterised and an algorithmic procedure for computing these bounds is described. Classical comparison results for viscosity solutions of Hamilton-Jacobi equations were utilised to obtain the upper and lower bound guarantees. The proposed upper bound is characterised by a collection of solutions of a single-player dynamical system and the proposed lower bound is characterised by a collection of hyperplanes. Under appropriate conditions, both the upper and lower bounds can be made to coincide with the value function along the aforementioned collection of solutions. In the application of reachability analysis, these bounds characterise inner-approximations for the set of states that can either reach or avoid a target set despite the presence of an adversary. The upper and lower bounds may be evaluated online by solving either a quadratic program or, respectively, taking the maximum over a collection of hyperplanes. In the numerical example considered in this work, only a sparse collection of solutions and hyperplanes is needed to generate modest error bounds. This in turn produced marked improvements in both the computational expense and memory requirement compared to grid-based approaches. 

\appendix
\section{Supplementary Results}
Definitions for open and closed set-valued maps are recalled in the following from \cite{hogan1973point}, in which $\A$ and $\B$ are arbitrary metric spaces. These definitions are also consistent with, respectively, inner semicontinuity and outer semicontinuity of set-valued maps in \cite[Def 5.4, p.152]{rockafellar2009variational}.
\begin{definition}
    A set-valued map $\Omega:\A\rightarrow\cP\left(\B\right)$\footnote{The set of all subsets of $\B$, i.e., the power set of $\B$, is denoted by $\cP\left(\B\right)$.} is open at a point $\Bar{a}\in\A$ if $\{a_j\}^\infty_{j=1}\subseteq \A$, $a_j\rightarrow \Bar{a}$, and $\Bar{b}\in\Omega(\Bar{a})$ implies the existence of $M\in\N$ and a sequence $\{b_j\}^\infty_{j=1}\subseteq \B$ such that $b_j\in\Omega(a_j)$ for all $j\geq M$ and $b_j\rightarrow \Bar{b}$.
    \label{def: open set-valued maps}
\end{definition}
\begin{definition}
    A set-valued map $\Omega:\A\rightarrow\cP\left(\B\right)$ is closed at a point $\Bar{a}\in\A$ if $\{a_j\}^\infty_{j=1}\subseteq \A$, $a_j\rightarrow\Bar{a}$, $b_j\in\Omega(a_j)$, and $b_j\rightarrow\Bar{b}$, implies $\Bar{b}\in\Omega(\Bar{a})$.
    \label{def: closed set-valued maps}
\end{definition}
\begin{definition}
A set-valued map $\Omega:\A\rightarrow\cP\left(\B\right)$ is continuous at a point $\Bar{a}\in\A$ if it is both open and closed at $\Bar{a}$.
        \label{def: continuous set-valued map}
\end{definition}
The following is recalled from \cite[Thm 7]{hogan1973point}.
\begin{theorem}
If $\Omega:\A\rightarrow\cP\left(\B\right)$ is continuous at $\Bar{a}\in\A$, uniformly compact in a neighbourhood of $\Bar{a}$, and $h:\A\times\B \rightarrow[-\infty,+\infty]$ is continuous on $\Bar{a}\times \Omega(\Bar{a})$, then $\hat{h}:\A\rightarrow [-\infty,+\infty]$, defined by
\begin{equation}
    \hat{h}(a) \doteq \sup_{b\in\Omega(a)}h(a,b), \quad \forall a\in\A,
    \label{eq: supremal value function}
\end{equation}
is continuous at $\Bar{a}$.
    \label{theorem: continuity of supremal value functions}
\end{theorem}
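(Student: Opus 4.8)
The plan is to prove continuity of $\hat h$ at $\Bar a$ by establishing the two one-sided inequalities $\liminf_{a\to\Bar a}\hat h(a)\ge \hat h(\Bar a)$ and $\limsup_{a\to\Bar a}\hat h(a)\le\hat h(\Bar a)$ separately, arguing with sequences since $\A$ and $\B$ are metric. I would first unpack continuity of $\Omega$ at $\Bar a$ into its openness and closedness there (Definition \ref{def: continuous set-valued map}), read ``uniformly compact near $\Bar a$'' in Hogan's sense, i.e.\ the closure of $\bigcup_{a\in\cU}\Omega(a)$ is compact for some neighbourhood $\cU\ni\Bar a$, and read ``$h$ continuous on $\{\Bar a\}\times\Omega(\Bar a)$'' as: $h$ is jointly continuous at every point $(\Bar a,b)$ with $b\in\Omega(\Bar a)$. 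The degenerate case $\Omega(\Bar a)=\emptyset$ (so $\hat h(\Bar a)=-\infty$) I would dispose of at the outset: closedness together with uniform compactness forces $\Omega(a)=\emptyset$ for $a$ near $\Bar a$, hence $\hat h\equiv-\infty$ there; so I may assume $\Omega(\Bar a)\ne\emptyset$ below.

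For the lower inequality, fix $b\in\Omega(\Bar a)$ and any sequence $a_j\to\Bar a$. Openness of $\Omega$ at $\Bar a$ (Definition \ref{def: open set-valued maps}) produces $M\in\N$ and $b_j\to b$ with $b_j\in\Omega(a_j)$ for $j\ge M$; joint continuity of $h$ at $(\Bar a,b)$ then gives $h(a_j,b_j)\to h(\Bar a,b)$, whence $\liminf_j\hat h(a_j)\ge\liminf_j h(a_j,b_j)=h(\Bar a,b)$. Taking the supremum over $b\in\Omega(\Bar a)$ yields $\liminf_j\hat h(a_j)\ge\hat h(\Bar a)$ along every such sequence, the value $+\infty$ being covered automatically since the supremum ranges over all $b$.

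For the upper inequality, take $a_j\to\Bar a$ and, after passing to a subsequence, assume $\hat h(a_j)\to L$ where $L\doteq\limsup_{a\to\Bar a}\hat h(a)$; assume $L>-\infty$, else there is nothing to prove. For each large $j$ pick, by definition of the supremum, $b_j\in\Omega(a_j)$ with $h(a_j,b_j)$ within $1/j$ of $\hat h(a_j)$ when the latter is finite and with $h(a_j,b_j)\ge j$ otherwise, so that $h(a_j,b_j)\to L$. Uniform compactness places the $b_j$ eventually inside a fixed compact set, so a further subsequence satisfies $b_j\to\Bar b$; closedness of $\Omega$ at $\Bar a$ (Definition \ref{def: closed set-valued maps}) forces $\Bar b\in\Omega(\Bar a)$, and joint continuity of $h$ at $(\Bar a,\Bar b)$ then gives $L=\lim_j h(a_j,b_j)=h(\Bar a,\Bar b)\le\hat h(\Bar a)$, which in particular rules out $L=+\infty$. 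Combining this with the previous paragraph gives $\lim_{a\to\Bar a}\hat h(a)=\hat h(\Bar a)$.

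I expect the only real delicacy to be the bookkeeping with the extended-real values together with the fact that $h(a_j,\cdot)$ is \emph{not} assumed continuous, so the supremum defining $\hat h(a_j)$ need not be attained; this is absorbed entirely into the $1/j$-near-optimal selections $b_j$, which is all that the compactness/closedness step requires (only pointwise existence of these selections is used, so no measurability question arises). Everything else is routine.
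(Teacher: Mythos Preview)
The paper does not prove this theorem; it is recalled verbatim from \cite[Thm~7]{hogan1973point} without proof. Your argument is correct and is essentially the standard one (and indeed Hogan's): lower semicontinuity of $\hat h$ at $\Bar a$ from openness of $\Omega$, upper semicontinuity from closedness of $\Omega$ together with uniform compactness, with the $1/j$-near-optimal selections absorbing the extended-real bookkeeping and the possible non-attainment of the supremum.
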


\begin{lemma} Let Assumption \ref{assumption: prelim: trimmed input condition} hold. Given any $k\in\{1,\ldots,N\}$, the function $\Bar{v}_k$ defined in \eqref{eq: main: upper-bounding surface kth level set} is continuous on $[0,T]\times\R^n$.
    \label{lemma: continuity of upper-bounding surface kth level set}
\end{lemma}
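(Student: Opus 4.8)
The plan is to establish continuity of $\bar v_k$ by showing that it is the composition and combination of continuous maps. Recall from \eqref{eq: main: upper-bounding surface kth level set} that $\bar v_k(t,x) = L_g\, d_A(t,x;\,\Omega^\star_k(t)) + \gamma_k$, where $d_A(t,x;\,\Omega) = \inf_{\xi\in\Omega}\norm{\Phi_A(T,t)(x-\xi)}$ and $\Omega^\star_k(t) = \mathrm{conv}(\{\xi^\star_{i,k}(t)\}^{n_k}_{i=1})$. Since $L_g$ and $\gamma_k$ are constants, it suffices to prove that $(t,x)\mapsto d_A(t,x;\,\Omega^\star_k(t))$ is continuous on $[0,T]\times\R^n$.

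First I would rewrite $d_A$ in a form amenable to Theorem \ref{theorem: continuity of supremal value functions}. Using the parametrisation of the convex hull by barycentric coordinates, one has
\begin{equation*}
d_A(t,x;\,\Omega^\star_k(t)) = \inf_{\lambda\in\Lambda}\Big\lVert \Phi_A(T,t)\Big(x - \textstyle\sum^{n_k}_{i=1}\lambda_i\xi^\star_{i,k}(t)\Big)\Big\rVert,
\end{equation*}
where $\Lambda \doteq \{\lambda\in[0,1]^{n_k}\,|\,\sum^{n_k}_{i=1}\lambda_i = 1\}$ is the (compact, fixed) standard simplex. The infimand, viewed as a function of $((t,x),\lambda)\in([0,T]\times\R^n)\times\Lambda$, is continuous: $\Phi_A(T,\cdot)$ is continuous (it solves the linear matrix ODE \eqref{eq: algorithm: state-transition matrix}), each $\xi^\star_{i,k}(\cdot)$ is continuous (solution of the linear system \eqref{eq: main: LTV system with trimmed input} with a measurable, essentially bounded control $\omega_{i,k}\in\sW[t,T]$; this uses that $\sW[t,T]$ is non-empty under Assumption \ref{assumption: prelim: trimmed input condition}, and that $\W(s)$ is compact so the control is bounded), and the norm is continuous. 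Then I would apply Theorem \ref{theorem: continuity of supremal value functions} with $\A = [0,T]\times\R^n$, $\B = \R^{n_k}$, the constant set-valued map $\Omega(t,x)\equiv\Lambda$ (which is trivially continuous and uniformly compact since $\Lambda$ is a fixed compact set), and $h((t,x),\lambda) = -\lVert\Phi_A(T,t)(x-\sum_i\lambda_i\xi^\star_{i,k}(t))\rVert$; the theorem yields continuity of $(t,x)\mapsto \sup_{\lambda\in\Lambda} h = -d_A(t,x;\,\Omega^\star_k(t))$, hence of $d_A$ itself, hence of $\bar v_k$.

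The main obstacle is ensuring the hypotheses of Theorem \ref{theorem: continuity of supremal value functions} are genuinely met — in particular establishing joint continuity of the infimand in $((t,x),\lambda)$, which hinges on continuity of each $\xi^\star_{i,k}(\cdot)$. This requires care because the controls $\omega_{i,k}$ are only measurable, not continuous; the standard argument (e.g.\ along the lines of \cite[Lem 7.1.2, p.187]{CS:04}) uses boundedness of $\W(s)$ on $[0,T]$ and the integral form of the solution to get Lipschitz-in-$t$ (hence continuous) dependence. An alternative, fully self-contained route that avoids invoking Theorem \ref{theorem: continuity of supremal value functions} is to prove directly that $x\mapsto d_A(t,x;\,\Omega)$ is Lipschitz in $x$ uniformly in $t$ (with constant $\sup_{t}\norm{\Phi_A(T,t)}$), and that for fixed $x$ the map $t\mapsto d_A(t,x;\,\Omega^\star_k(t))$ is continuous via continuity of $\Phi_A(T,\cdot)$ together with the Hausdorff-continuity of $t\mapsto\Omega^\star_k(t)$ (which follows from continuity of the vertices $\xi^\star_{i,k}(\cdot)$); joint continuity then follows by a standard triangle-inequality argument. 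I would present the first route as the primary proof since it most directly leverages the appendix machinery already assembled.
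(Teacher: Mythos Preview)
Your proposal is correct and uses the same appendix machinery (Theorem \ref{theorem: continuity of supremal value functions}) as the paper, but with a genuinely different and in fact cleaner decomposition. The paper applies Theorem \ref{theorem: continuity of supremal value functions} with the set-valued map $(t,x)\mapsto\Omega^\star_k(t)$ itself as the constraint set, which obliges it to verify separately that this map is uniformly compact, open (inner semicontinuous), and closed (outer semicontinuous); the latter two are its subclaims~(b) and~(c), each proved directly from continuity of the vertex trajectories $\xi^\star_{i,k}(\cdot)$ and a Bolzano--Weierstrass argument on the barycentric coefficients. You instead lift the optimisation to the fixed simplex $\Lambda$ of barycentric coordinates and push all the time dependence into the objective $h$. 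Because $\Lambda$ is a constant compact set, the set-valued continuity and uniform compactness hypotheses become trivial, and the only substantive step is joint continuity of $h((t,x),\lambda)$, which follows immediately from continuity of $\Phi_A(T,\cdot)$ and of each $\xi^\star_{i,k}(\cdot)$. In effect you have absorbed the paper's subclaims~(a)--(c) into a single elementary observation; the price is that your argument is specific to polytopes with a fixed finite vertex count, whereas the paper's set-valued route would generalise to other continuously moving compact convex sets. Your identification of the one nontrivial ingredient---continuity of the $\xi^\star_{i,k}$ given only measurable controls, handled via boundedness of $\W(s)$ and the integral representation---matches what the paper implicitly relies on as well.
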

\begin{proof}
Let the set-valued map $\hat{\Omega}^\star_k:[0,T]\times\R^n\rightarrow\cP(\R^n)$ be defined by
\begin{equation*}
\hat{\Omega}^\star_k(t,x) \doteq \Omega^\star_k(t) = \mathrm{conv}\left(\{\xi^\star_{i,k}(t)\}^{n_k}_{i=1}\right) , \quad \forall (t,x)\in[0,T]\times\R^n,
\end{equation*}
in which the set $\{\xi^\star_{i,k}(t)\}^{n_k}_{i=1}$ characterises $\Bar{v}_k$ in \eqref{eq: main: upper-bounding surface kth level set}. In particular, $\hat{\Omega}^\star_k$ is constant in $x$. Fix $(\Bar{t},\Bar{x}) \in [0,T]\times\R^n$. 
    
    \emph{Subclaim (a): The set-valued map $\hat{\Omega}^\star_k:[0,T]\times\R^n\rightarrow \cP\left(\R^n\right)$ is uniformly compact in a neighbourhood of $(\Bar{t},\Bar{x})$.} Fix $r>0$ and let $\cY \doteq \B_r((\Bar{t},\Bar{x}))\cap ([0,T]\times\R^n)$. By continuity of solutions of \eqref{eq: main: LTV system with trimmed input}, for any $i\in\{1,\ldots,n_k\}$, $\xi^\star_{i,k}(t)$ is bounded uniformly in $t\in[0,T]$ and consequently bounded uniformly in $(t,x)\in \cY$, thus $\bigcup_{(t,x)\in \cY}\hat{\Omega}^\star_k(t,x) \subset \R^n$ is bounded. Consequently, $\mathrm{cl}\big(\bigcup_{(t,x)\in \cY}\hat{\Omega}^\star_k(t,x)\big)$ is compact, which is subclaim (a). 

    \emph{Subclaim (b): $\hat{\Omega}^\star_k$ is open at $(\Bar{t},\Bar{x})$ (see Definition \ref{def: open set-valued maps}).} Fix any sequence $\{(t_j,x_j)\}^\infty_{j=1}\subseteq [0,T]\times\R^n$ with $(t_j,x_j)\rightarrow (\Bar{t},\Bar{x})$ and fix $\Bar{\xi} \in \hat{\Omega}^\star_k(\Bar{t},\Bar{x}) = \Omega^\star_k(\Bar{t})$. By \eqref{eq: main: upper-bounding surface kth level set}, there exists a set of scalars $\{\kappa_i\}^{n_k}_{i=1}\subset [0,1]$ with $\sum^{n_k}_{i=1}\kappa_i = 1$ such that $\Bar{\xi} = \sum^{n_k}_{i=1}\kappa_i \xi^\star_{i,k}(\Bar{t})$. Consider the sequence $\xi_j \doteq \sum^{n_k}_{i=1}\kappa_i \xi^\star_{i,k}(t_j)$ for all $j\in\N$. By continuity of the solutions $\xi^\star_{i,k}$, $\lim_{j\rightarrow \infty}\xi_j = \Bar{\xi}$. Moreover, as $\xi_j$ is a convex combination of $\{\xi^\star_{i,k}(t_j)\}^{n_k}_{i=1}$, $\xi_j \in \Omega^\star_k(t_j) = \hat{\Omega}^\star_k(t_j,x_j)$, which implies $\hat{\Omega}^\star_k$ is open at $(\Bar{t},\Bar{x})$, i.e., subclaim (b) holds.

    \emph{Subclaim (c): $\hat{\Omega}^\star_k$ is closed at $(\Bar{t},\Bar{x})$ (see Definition \ref{def: closed set-valued maps}).} Fix any sequence $\{(t_j,x_j)\}^\infty_{j=1}\subseteq [0,T]\times\R^n$ such that $(t_j,x_j)\rightarrow (\Bar{t},\Bar{x})$ and fix any sequence $\{\xi_j\}^\infty_{j=1}\subset\R^n$ that converges to $\Bar{\xi}\in\R^n$ with $\xi_j \in \hat{\Omega}^\star_k(t_j,x_j) = \Omega^\star_k(t_j)$ for all $j\in\N$. By \eqref{eq: main: upper-bounding surface kth level set}, there exists a set of scalars $\{\kappa_{i,j}\}^{n_k}_{i=1}\subset [0,1]$ with $\sum^{n_k}_{i=1}\kappa_{i,j} = 1$ such that $\xi_j = \sum^{n_k}_{i=1}\kappa_{i,j}\xi^\star_{i,k}(t_j)$. Let $\cK \doteq \{K \in [0,1]^{n_k}\,|\, \mathbf{1}_{1\times n_k}K = 1\}$ denote the set of coefficients for a convex combination of $n_k$-elements. As $\cK$ is compact and $K_j \doteq \begin{bmatrix}\kappa_{1,j} & \cdots &\kappa_{n_k,j}\end{bmatrix}' \subset \cK$, by Bolzano-Weierstrass, there exists a subsequence of $\{K_j\}^\infty_{j=1}$ that converges to $\Bar{K} \in \cK$. Without loss of generality, let $\{K_j\}^\infty_{j=1}$ be such a sequence. Then,
    \begin{align*}
        \lim_{j\rightarrow \infty} \xi_j 
        &= \lim_{j\rightarrow \infty} \left(\begin{bmatrix}
            \xi^\star_{1,k}(t_j) & \cdots & \xi^\star_{n_k,k}(t_j)
        \end{bmatrix}K_j\right), \\
        &= \begin{bmatrix}
            \xi^\star_{1,k}(\Bar{t}) & \cdots & \xi^\star_{n_k,k}(\Bar{t})
        \end{bmatrix}\Bar{K} \in \text{conv}(\{\xi^\star_{i,k}(\Bar{t})\}^{n_k}_{i=1}).
    \end{align*}
    Thus, $\Bar{\xi} \in \Omega^\star_k(\Bar{t}) = \hat{\Omega}^\star_k(\Bar{t},\Bar{x})$, which implies $\hat{\Omega}^\star_k$ is closed at $(\Bar{t},\Bar{x})$, i.e., subclaim (c) holds. Moreover, by Definition \ref{def: continuous set-valued map} and subclaims (b) and (c), $\hat{\Omega}^\star_k$ is continuous at $(\Bar{t},\Bar{x})$. 
    
    Finally, consider the map $(t,x)\mapsto -d_A(t,x;\Omega^\star_k(t))$, which, by \eqref{eq: main: time-dependent dist function def}, satisfies
    \begin{equation}
        -d_A(t,x;\Omega^\star_k(t)) = -\inf_{\xi\in\Omega^\star_k(t)} \lVert \Phi_A(T,t)(x-\xi)\rVert = \sup_{\xi\in\hat{\Omega}^\star_k(t,x)}-\lVert \Phi_A(T,t)(x-\xi)\rVert. 
    \end{equation}
   By subclaims (a), (b), (c), and using $(t,x)$ in place of $a$ and $\xi$ in place of $b$ in Theorem \ref{theorem: continuity of supremal value functions}, the map $(t,x)\mapsto -d_A(t,x;\Omega^\star_k(t))$ is continuous at $(\Bar{t},\Bar{x})$. Then, by \eqref{eq: main: upper-bounding surface kth level set} and noting that $(\bar{t},\Bar{x})\in[0,T]\times\R^n$ is arbitrary, $\Bar{v}_k$ is continuous on $[0,T]\times\R^n$.
\end{proof}

\begin{lemma} The set $\W(t)$ in \eqref{eq: prelim: trimmed input set def} is convex for all $t\in[0,T]$.
    \label{lemma: convexity of trimmed input set}
\end{lemma}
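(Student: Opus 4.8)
The plan is to exhibit $\W(t)$ as an intersection of convex sets. First I would rewrite the Minkowski difference in \eqref{eq: prelim: trimmed input set def} in the standard ``erosion'' form: unwinding the definition $\cY\ominus\cZ\doteq\{c\in\R^n\,|\,\{c\}\oplus\cZ\subseteq\cY\}$ with $\cY=B(t)\U$ and $\cZ=-E(t)\D$ gives
\begin{equation*}
\W(t)=\big\{c\in\R^n\,\big|\,c-E(t)d\in B(t)\U\ \text{ for all }d\in\D\big\}=\bigcap_{d\in\D}\big(B(t)\U\oplus\{E(t)d\}\big).
\end{equation*}
So $\W(t)$ is an intersection, indexed by $d\in\D$, of translates of the single set $B(t)\U$.

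Next I would observe that $B(t)\U$ is convex: $\U$ is convex by hypothesis, and $u\mapsto B(t)u$ is a linear map, and the image of a convex set under a linear (more generally affine) map is convex. Consequently each translate $B(t)\U\oplus\{E(t)d\}$ is convex, since translating a convex set by a fixed vector preserves convexity. Finally, an arbitrary intersection of convex subsets of $\R^n$ is convex, so $\W(t)$ is convex; this also covers the degenerate case in which $\W(t)=\emptyset$, which is vacuously convex. Since $t\in[0,T]$ was arbitrary, the claim follows.

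There is no substantive obstacle here — the only point that warrants a line of care is the elementary set-theoretic identity $\cY\ominus\cZ=\bigcap_{z\in\cZ}(\cY\oplus\{-z\})$, after which everything reduces to the two standard facts that linear images and translates of convex sets are convex and that intersections of convex sets are convex.
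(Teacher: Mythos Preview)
Your proof is correct and is essentially the same argument as the paper's, just packaged more structurally: the paper verifies convexity by a direct element-chasing computation (for fixed $d\in\D$, $\omega_i-E(t)d=B(t)u_i$ and then takes the convex combination), whereas you invoke the identity $\cY\ominus\cZ=\bigcap_{z\in\cZ}(\cY-z)$ and the closure of convexity under intersections. Both rest on the same two ingredients—convexity of $B(t)\U$ as a linear image of $\U$, and the Minkowski-difference structure—so there is no substantive difference.
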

\begin{proof}
The proof proceeds in the case where $\W(t)$ is non-empty, as empty sets are convex by definition. Fix $t\in[0,T]$, $\kappa \in [0,1]$, $d\in\D$ and let $\omega_1,\omega_2\in\W(t) $. By \eqref{eq: prelim: trimmed input set def}, there exists $u_1,u_2\in\U$ such that $\omega_1-E(t)d = B(t)u_1$ and $\omega_2-E(t)d = B(t)u_2$. Consequently,
\begin{align}
    \kappa \omega_1 + (1-\kappa)\omega_2 - E(t)d &= \kappa (\omega_1-E(t)d) + (1-\kappa)(\omega_2 - E(t)d) \nn \\
    &= B(t)\left(\kappa u_1 + (1-\kappa)u_2\right) = B(t) \hat{u}, \nn
\end{align}
in which $\hat{u} \doteq \kappa u_1 + (1-\kappa)u_2 \in\U$, which follows by convexity of $\U$. As $d\in\D$ is arbitrary, $\{\kappa \omega_1 + (1-\kappa)\omega_2\}\oplus \left(-E(t)\D\right)\subseteq B(t)\U$, thus $\kappa \omega_1 + (1-\kappa)\omega_2 \in \W(t)$. The assertion holds by noting that $t\in[0,T]$ and $\kappa \in [0,1]$ are arbitrary. 
\end{proof}

\begin{lemma} Let Assumption \ref{assumption: prelim: trimmed input condition} hold. Then, the set-valued map $\W:[0,T]\rightarrow \cP\left(\R^{n}\right)$ defined by \eqref{eq: prelim: trimmed input set def} is continuous for all $t\in[0,T]$.
    \label{lemma: continuity of trimmed control set}
\end{lemma}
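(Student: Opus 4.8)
The plan is to establish continuity by checking, for each fixed $\bar t\in[0,T]$, that $\W$ is both open and closed at $\bar t$ in the sense of Definitions \ref{def: open set-valued maps}--\ref{def: closed set-valued maps}, so that Definition \ref{def: continuous set-valued map} applies. Throughout I would use three standing facts: by Assumption \ref{assumption: prelim: trimmed input condition} and Lemma \ref{lemma: convexity of trimmed input set} every $\W(t)$ is non-empty and convex; since $\W(t)\subseteq B(t)\U\oplus\{E(t)\hat d\}$ for any fixed $\hat d\in\D$, continuity of $B,E$ and compactness of $\U,\D$ make $\W$ uniformly compact near $\bar t$; and the maps $t\mapsto B(t)\U$ and $t\mapsto -E(t)\D$ are Hausdorff continuous, say with modulus $\delta(t)\to 0$ as $t\to\bar t$. \emph{Closedness} is the routine half: given $t_j\to\bar t$, $\omega_j\in\W(t_j)$, $\omega_j\to\bar\omega$, and any $d\in\D$, write $\omega_j-E(t_j)d=B(t_j)u_j$ with $u_j\in\U$, pass to a subsequence with $u_j\to\bar u\in\U$ by compactness, and take the limit using continuity of $B,E$ to get $\bar\omega-E(\bar t)d=B(\bar t)\bar u\in B(\bar t)\U$; as $d$ is arbitrary, $\bar\omega\in\W(\bar t)$.

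For \emph{openness}, fix $\bar t$, $\bar\omega\in\W(\bar t)$ and $t_j\to\bar t$; it suffices to produce $\omega_j\in\W(t_j)$ with $\omega_j\to\bar\omega$. The starting observation is that $\bar\omega$ is \emph{almost feasible} for $\W(t_j)$: from $\{\bar\omega\}\oplus(-E(\bar t)\D)\subseteq B(\bar t)\U$, Hausdorff continuity gives $\{\bar\omega\}\oplus(-E(t_j)\D)\subseteq B(t_j)\U\oplus 2\delta(t_j)\bar B$, with $\bar B$ the closed unit ball. The aim is then to correct $\bar\omega$ by an $o(1)$ amount so as to remove this $2\delta(t_j)\bar B$ slack. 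When $\W(\bar t)$ is full-dimensional this works cleanly: pick $c^\star$ and $\rho>0$ with $\{c^\star\}\oplus\rho\bar B\subseteq\W(\bar t)$; the same estimate followed by Minkowski-difference manipulations — using convexity of $B(t)\U$ together with $(A\oplus M)\ominus C\supseteq A\oplus(M\ominus C)$ and $(A\ominus s\bar B)\oplus s\bar B\subseteq A$ for convex $A$ — shows $\{c^\star\}\oplus(\rho-2\delta(t_j))\bar B\subseteq\W(t_j)$ once $2\delta(t_j)<\rho$. Interpolating, $\omega_j\doteq(1-\lambda_j)\bar\omega+\lambda_j c^\star$ with $\lambda_j\doteq 2\delta(t_j)/\rho\to 0$ then lies in $\W(t_j)$ — a convex combination of an almost-feasible point and a point feasible with margin $\rho-2\delta(t_j)$ absorbs the slack, by convexity of $\W(t_j)$ — and $\omega_j\to\bar\omega$.

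The step I expect to be the main obstacle is \emph{openness when $\W(\bar t)$ is lower-dimensional}, which in applications is the typical case (e.g. whenever $B(t)$ is rank-deficient): then no point of $\W(\bar t)$ carries a full-ball cushion, only a relative one $\{c^\star\}\oplus\rho(\bar B\cap V_0)\subseteq\W(\bar t)$ inside the direction space $V_0$ of $V\doteq\mathrm{aff}\,\W(\bar t)$, so the interpolation above can absorb only the component of the $2\delta(t_j)\bar B$ slack tangent to $V$, leaving a transverse error. The plan here is a reduction in dimension: closedness and uniform compactness already give $\W(t)\subseteq\W(\bar t)\oplus\B_\epsilon(0)$ for $t$ near $\bar t$, so $\W(t)$ is squeezed into a thin slab about $V$; one then works with the orthogonal projections $\pi_V(\W(t))\subseteq V$, which lie within $O(\epsilon)$ of $\W(t)$, and shows these converge to $\pi_V(\W(\bar t))=\W(\bar t)$ by relating $\pi_V\big(B(t)\U\ominus(-E(t)\D)\big)$ to $\pi_V(B(t)\U)\ominus\pi_V(-E(t)\D)$ up to controlled error and invoking the statement inductively in dimension $\dim V<n$ (the base case $n=1$ being immediate, since there $\W(t)$ is an interval whose two endpoints depend continuously on $t$). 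The delicate point — and where the Minkowski-difference structure $\W(t)=B(t)\U\ominus(-E(t)\D)$ must genuinely be used, a generic continuously parametrised family of convex sets failing to be lower semicontinuous — is bounding the loss incurred when commuting the projection past the Minkowski difference, and verifying that the transverse extent of $\W(t)$ does not obstruct this reduction.
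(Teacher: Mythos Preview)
Your closedness argument and your full-dimensional openness argument are correct. The paper proves closedness by a symmetric variant of its openness argument rather than by subsequence extraction, but the content is the same.

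For openness, the paper's route is much shorter than yours: from $\bar\omega-E(\bar t)d=B(\bar t)\hat u_d$ it writes the identity $\bar\omega-E(t_j)d+e_d=B(t_j)\hat u_d$ with $\|e_d\|<\epsilon$ uniformly in $d\in\D$, and then asserts directly that $\bar\omega\in\W(t_j)\oplus\B_\epsilon(0)$. You are right to be suspicious of that step: the error $e_d$ depends on $d$, so the implication ``$\{\bar\omega\}\oplus(-E(t_j)\D)\subseteq B(t_j)\U\oplus\B_\epsilon(0)$, hence $\bar\omega\in\W(t_j)\oplus\B_\epsilon(0)$'' is, read literally, the inclusion $(A\oplus\B_\epsilon(0))\ominus C\subseteq(A\ominus C)\oplus\B_\epsilon(0)$, which fails in general (take $A$ the closed unit disk and $C=[-1,1]\times\{0\}$: the left side contains $(0,\sqrt{2\epsilon})$ while the right side is $\B_\epsilon(0)$). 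So the dimensional obstruction you flag is genuine, and the paper's proof does not address it; your interpolation-towards-an-interior-point argument is the correct way to handle the full-dimensional case.

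That said, your proposed remedy for the degenerate case is not yet a proof either. The inductive reduction hinges on comparing $\pi_V\big(B(t)\U\ominus(-E(t)\D)\big)$ with $\pi_V(B(t)\U)\ominus\pi_V(-E(t)\D)$, and you have not supplied any estimate for their discrepancy; this is exactly the place where Pontryagin difference fails to commute with linear maps, and the slab bound you extract from closedness controls only $\W(t)$, not the constituent sets. Moreover, after projection the sets $\pi_V(B(t)\U)$ and $\pi_V(-E(t)\D)$ need not be of the form $\tilde B(t)\U$ and $-\tilde E(t)\D$ for continuous matrix-valued $\tilde B,\tilde E$, so the induction hypothesis as you have stated it does not apply --- you would need the stronger claim for arbitrary continuously-parametrised compact convex families with nonempty difference, and you yourself note this is false in general. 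In short: you have correctly located a gap that the paper's own argument glosses over, but your sketch for the lower-dimensional case does not close it.
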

\begin{proof}
Fix $\Bar{t}\in[0,T]$. 

\emph{Subclaim (a): $\W$ is open at $\Bar{t}$ (see Definition \ref{def: open set-valued maps}).} Fix any sequence $\{t_j\}^\infty_{j=1}\subseteq [0,T]$ with $t_j \rightarrow \Bar{t}$ and let $\Bar{\omega} \in \W(\Bar{t})$. Note that $\Bar{\omega}$ exists by Assumption \ref{assumption: prelim: trimmed input condition}. Next, fix $d\in\D$ and note that by \eqref{eq: prelim: trimmed input set def}, there exists $\hat{u}\in\U$ such that $\Bar{\omega} -E(\Bar{t})d = B(\Bar{t})\hat{u}$. Consequently, for all $j\in\N$,
\begin{equation}
\Bar{\omega} - E(t_j)d + (E(\Bar{t})-E(t_j))d + (B(t_j) - B(\Bar{t}))\hat{u} = B(t_j)\hat{u}. \label{eq: continuity of trimmed control set proof 1}
\end{equation}
By continuity of $B$ and $E$, given $\epsilon > 0$, there exists $\delta >0$ such that 
\begin{equation*}
    \norm{(E(\Bar{t})-E(t_j))d + (B(t_j) - B(\Bar{t}))u} < \epsilon, \quad \forall d\in\D,\: u\in \U,
\end{equation*}
whenever $|t_j - \Bar{t}|<\delta$. Since $t_j \rightarrow \Bar{t}$, there exists $M\in\N$ such that $|t_j-\Bar{t}| < \delta$ for all $j \geq M$. Moreover, since $d\in\D$ is arbitrary, by \eqref{eq: continuity of trimmed control set proof 1} and \eqref{eq: prelim: trimmed input set def},
\begin{align}
    \Bar{\omega} &\in \left\{\omega\in\R^{n}\,|\,\exists e\in\B_{\epsilon}(0) \text{ s.t. } (\omega-e)\in\W(t_j)\right\}\nn\\
    &= \left\{\omega+e\in\R^{n}\,|\, e\in\B_{\epsilon}(0), \omega\in\W(t_j)\right\} = \W(t_j)\oplus \B_{\epsilon}(0), \label{eq: continuity of trimmed control set proof 2}
\end{align}
for all $j \geq M$. Since $\epsilon >0$ is arbitrary, by \eqref{eq: continuity of trimmed control set proof 2} and Assumption \ref{assumption: prelim: trimmed input condition}, there exists a sequence $\{\omega^j\}^\infty_{j=1}$ with $\omega^j\in\W(t_j)$ and $\omega^j\rightarrow \Bar{\omega}$, which implies that $\W$ is open at $\Bar{t}$. 

\emph{Subclaim (b): $\W$ is closed at $\Bar{t}$ (see Definition \ref{def: closed set-valued maps}).} Fix any sequence $\{t_j\}^\infty_{j=1}\subseteq [0,T]$ with $t_j \rightarrow \Bar{t}$ and fix any sequence $\{\omega^j\}^\infty_{j=1}\in\R^{n}$ that converges to $\Bar{\omega}$ with $\omega^j \in \W(t_j)$ for all $j\in\N$. By interchanging $\Bar{t}$ with $t_j$ and $\omega^j$ with $\Bar{\omega}$, the same arguments as subclaim (a) yield
\begin{equation}
    \omega^j \in \W(\Bar{t})\oplus\B_{\epsilon}(0),
    \label{eq: continuity of trimmed control set proof 3}
\end{equation}
for a given $\epsilon >0$ and for all $j$ sufficiently large. As $\W(\Bar{t})$ is a closed set, \eqref{eq: continuity of trimmed control set proof 3} implies $\lim_{j\rightarrow \infty}\omega^j = \Bar{\omega}\in \W(\Bar{t})$, which gives subclaim (b). Consequently, by Definition \ref{def: continuous set-valued map} and subclaims (a) and (b), $\W$ is continuous at $\Bar{t}$. The assertion holds by noting that $\Bar{t} \in [0,T]$ is arbitrary.
\end{proof}

\bibliographystyle{IEEEtran}
\small{\bibliography{references}}
\end{document}